\title[]{Well-posedness for SQG sharp fronts with unbounded curvature}
\author{Francisco Gancedo}
\address{Departamento de An\'alisis Matem\'atico $\&$ IMUS, Universidad de Sevilla, Sevilla, Spain}
\email{fgancedo@us.es}
\author{Huy Q. Nguyen}
\address{Department of Mathematics, Brown University, Providence, RI 02912}
\email{hnguyen@math.brown.edu}
\author{Neel Patel}
\address{Department of Mathematics, University of Michigan, Ann Arbor, MI 48109}
\email{neeljp@umich.edu}
\newcommand{\bq}{\begin{equation}}
	\newcommand{\eq}{\end{equation}}
\newcommand{\bqa}{\begin{eqnarray*}}
	\newcommand{\eqa}{\end{eqnarray*}}
\theoremstyle{plain}
\newtheorem{theo}{Theorem}[section]
\newtheorem{prop}[theo]{Proposition}
\newtheorem{lemm}[theo]{Lemma}
\newtheorem{defi}[theo]{Definition}
\theoremstyle{definition}
\newtheorem{rema}[theo]{Remark}
\DeclareMathOperator{\supp}{supp}
\DeclareSymbolFont{pletters}{OT1}{cmr}{m}{sl}
\DeclareMathSymbol{s}{\mathalpha}{pletters}{`s}
\def\tt{\theta}
\def\eps{\varepsilon}
\def\na{\nabla}
\def\les{\lesssim}
\def\mez{\frac{1}{2}}
\def\tdm{\frac{3}{2}}
\def\Rr{\mathbb{R}}
\def\T{\mathbb{T}}
\def\Nn{\mathbb{N}}
\def\Zz{\mathbb{Z}}
\def\Cc{\mathbb{C}}
\def\cF{\mathcal{F}}
\def\cN{\mathcal{N}}
\def\cP{\mathcal{P}}
\def\cT{\mathcal{T}}
\def\ld{\lambda}
\def\Ld{\lambda}
\def\p{\partial}
\def\na{\nabla}
\def\wt{\widetilde}
\def\Ld{\Lambda}
\def\g{\gamma}
\def\dg{\partial_\gamma}
\def\wh{\widehat}
\def\wsc{\overset{\ast}{\rightharpoonup}}
\numberwithin{equation}{section}
\newcommand{\e}{\eta}
\newcommand{\de}{\partial_{\eta}}
\date{today}
\begin{document}
	\begin{abstract}
	%The SQG  equation arises in the modeling of atmospheric and oceanic flows, where patch solutions  represent the evolution of sharp temperature fronts. %Numerical results indicate a blow-up of the interface's curvature at the  instant of finite-time singularity. Furthermore, rigorous  local well-posedness results were only known for sharp fronts of bounded curvature. 
Patch solutions for the surface quasigeostrophic (SQG) equation model sharp temperature fronts in atmospheric and oceanic flows. We establish local well-posedness for SQG sharp fronts of low Sobolev regularity, $H^{2+s}$ for arbitrarily small $s>0$,  allowing for fronts with unbounded curvature.
	\end{abstract}
	
	\keywords{}
	
	\noindent\thanks{\em{ MSC Classification:  .}}

	\maketitle	
\section{Introduction}
	We study the surface quasigeostrophic (SQG) equation 
\begin{align}\label{transport}
	&\p_t \tt(x, t)+u(x, t)\cdot \na \tt(x, t)=0,\quad (x, t)\in \Rr^2\times \Rr_+,\\ \label{SQG}
	&u=\na^\perp \Ld^{-1}\tt,\quad \Ld=(-\Delta)^\mez.
\end{align}
The SQG equation of geophysical significance comes from the 3D quasigeostrophic (QG) equation in the half-space. The QG equation arises in the study of stratified flows in which the Coriolis force is balanced with the pressure and serves as a model in simulations of large-scale atmospheric and oceanic circulation \cite{P}. In the ideal case of zero bulk vorticity, the QG equation reduces to a self-consistent equation on the boundary--the SQG equation \eqref{transport}-\eqref{SQG}.  SQG  shares many analogies with the 3D Euler equations \cite{CMT} and has been studied extensively. In particular, vortex lines for the 3D Euler equations are analogous to  levels sets of the scalar $\theta$.  

SQG is an active scalar equation, i.e. the  transport velocity is given in terms of the transported scalar. A class of active scalar equations, called generalized SQG (gSQG), is given by \eqref{SQG} with the velocity 
\bqa\label{gBS}
u=\na^\perp \Ld^{-2+\alpha}\tt,\quad \alpha\in [0, 1].
\eqa
The case $\alpha=0$ corresponds to the vorticity formulation of the 2D Euler equations. The scalar-to-velocity relation becomes more singular as $\alpha$ increases. 

While the SQG equation is locally well-posed in  Sobolev spaces $H^s(\Rr^2)$, $s>2$,  the question of global regularity versus finite-time singularity remains open \cite{CMT}. This is also the case for gSQG for any $\alpha\in (0, 1)$ \cite{CCW}. Nevertheless, certain blow-up scenarios have been ruled out; eg. collision of level sets in the development of sharp fronts \cite{CF}, hyperbolic saddle breakdown \cite{C}.
There exist families of non-trivial global solutions \cite{CCG-S2} but some small initial data provide growth for all time if lack of finite time singularities is assumed \cite{HK}. 
 On the other hand, $L^p$ weak solutions are known to exist globally, but their uniqueness is unknown. More precisely, by appealing to the commutator structure of SQG's nonlinearity in the weak form, global weak solutions were obtained for $L^2$ \cite{Resnick} and lower $L^p$ integrability ($p>4/3$) \cite{Marchand}. Nonunique weak solutions with negative Sobolev regularity were constructed in \cite{BSV}.  $L^p$ weak solutions include the important class of {\it patch solutions} which are given by the characteristic function of an evolving domain $D(t)\subset \Rr^2$:
\bq\label{front}
\tt(x, t)=\begin{cases}
\tt^0\quad\text{if } x\in D(t),\\
0\quad\text{if } x\notin D(t),
\end{cases}
\eq
where $\tt^0$ is a nonzero constant. In other words, $D(t)$ is the level set $\{\tt(t)=\tt^0\}$. Patch solutions of the 2D Euler equations are also called vortex  patches \cite{bertozzi-Majda}. SQG patches are also called {\it sharp fronts} as they model sharp temperature fonts in atmospheric and oceanic flows. Unlike general weak solutions, sharp fronts are known to be  unique. Precisely, \cite{Rodrigo} proved  local-in-time existence of $C^\infty$ periodic sharp fronts using a Nash-Moser inverse function theorem. Here, the regularity is that of  $\p D(t)$. The first  local-in-time existence result for finite regularity \cite{G} was obtained in $H^k$, $k\ge 3$.  Uniqueness was subsequently established in \cite{CCG} for  sharp front weak solutions with {\it bounded curvature}; namely, $\p D(t) \in C^{2+\delta}$. Recently,  \cite{GP} proved the existence of $H^2$ patches for gSQG with $\alpha\in (0, 1)$, the proof of which breaks down when $\alpha=1$ (SQG) and no uniqueness was given for $\alpha\in[\mez,1)$. Note that for  $H^2$ fronts, the  curvature  is merely $L^2$.  In view of the aforementioned results, a natural question is whether SQG sharp fronts with {\it unbounded curvature} are well-posed (existence and uniqueness). In the present paper, we affirmatively answer this by establishing local well-posedness of SQG sharp fronts in $H^{2+s}$ with arbitrarily small $s>0$.  

\begin{theo}\label{SQGTheorem}
	Let $s\in (0, \mez)$ be arbitrary. For any initial bounded domain $D(0)$ with non self-intersecting $H^{2+s}$ boundary, there exist $T>0$ and a unique solution $\tt$ of \eqref{transport}-\eqref{SQG}-\eqref{front} on $[0, T]$ such that the boundary of $D(t)$ remains non self-intersecting and belongs to $H^{2+s}$. 
\end{theo}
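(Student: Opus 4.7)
The plan is to recast the SQG patch evolution as a contour dynamics equation for a parametrization $z(\gamma,t)$ of $\p D(t)$, establish a priori $H^{2+s}$ bounds that avoid any $L^\infty$ control on the curvature, and close existence by mollification/compactness and uniqueness by a lower-regularity energy estimate.

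\emph{Step 1: Contour formulation and parametrization.} I would parametrize $\p D(t)$ by a curve $z(\gamma,t)\colon \T\to\Rr^2$ and use the Biot--Savart formula from \eqref{SQG} to reduce the evolution of $\p D(t)$ under $u$ to
\[
\p_t z(\gamma,t) = -\frac{\tt^0}{2\pi}\int_{\T}\frac{\p_\gamma z(\gamma)-\p_\eta z(\eta)}{|z(\gamma)-z(\eta)|}\,d\eta + c(\gamma,t)\,\p_\gamma z(\gamma),
\]
where $c$ is a tangential reparametrization freedom. Following \cite{G}, I would fix $c$ so that $|\p_\gamma z|^2\equiv A(t)$ depends only on $t$, preventing differential stretching, and I would monitor the non self-intersection condition through the chord-arc quantity $F[z](\gamma,\eta)=|\eta|/|z(\gamma)-z(\gamma-\eta)|$.

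\emph{Step 2: A priori $H^{2+s}$ estimate.} The core of the proof is an inequality of the form
\[
\frac{d}{dt}\bigl(\|z\|_{H^{2+s}}^{2}+\|F[z]\|_{L^\infty}^{2}\bigr)\le \Phi\bigl(\|z\|_{H^{2+s}},\|F[z]\|_{L^\infty}\bigr)
\]
for a continuous $\Phi$. Applying $\Ld^s\p_\gamma^2$ to the contour equation, pairing with $\Ld^s\p_\gamma^2 z$ in $L^2$, and expanding the nonlinear kernel, the principal contribution should split into a first-order operator with skew-symmetric leading symbol (which either vanishes in energy or provides a dissipative gain of $\Ld^{1/2}$) plus commutator remainders. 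The remainders I would control via Littlewood--Paley paraproducts and fractional Kato--Ponce/Leibniz estimates, closing every bound using \emph{only} $\|z\|_{H^{2+s}}$ and $\|F[z]\|_{L^\infty}$.

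\emph{Steps 3--4: Existence and uniqueness.} For existence I would mollify the initial datum to obtain smooth solutions $z^\varepsilon$ from the $H^k$ theory of \cite{G}; the Step 2 bound gives a common existence time $T>0$ and uniform $H^{2+s}$ bounds, and a standard Aubin--Lions compactness argument yields a limit $z\in L^\infty_T H^{2+s}$. For uniqueness I would estimate the difference of two solutions in a norm slightly below $H^{2+s}$, where the loss of one derivative in the nonlinearity is compensated by the $H^{2+s}$ regularity of the coefficients in each solution.

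\emph{Main obstacle.} The crucial difficulty is Step 2. For $s<\mez$ the Sobolev embedding $H^s\hookrightarrow L^\infty$ fails, so the curvature $\ka$ of $\p D(t)$ is unbounded and the energy arguments of \cite{G,CCG} --- which implicitly treat $\ka$ as an $L^\infty$ coefficient --- break down. Overcoming this requires exploiting the exact cancellation in the SQG kernel between the numerator $\p_\gamma z(\gamma)-\p_\eta z(\eta)$ and the denominator $|z(\gamma)-z(\eta)|$, so that the most singular contributions appear as commutators that gain one derivative. The paraproduct decomposition must be arranged so that in every ``high--high'' interaction the second derivative of $z$ sits in $L^2$ while all other factors are controlled by the $H^{2+s}$ norm alone, thereby bypassing any need for an $L^\infty$ bound on $\ka$.
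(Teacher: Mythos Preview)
Your overall architecture matches the paper: contour dynamics with the constant-arclength parametrization, $H^{2+s}$ a priori bounds via $\Ld^s\p_\g^2$ and commutator estimates, existence by mollification plus the $H^3$ theory of \cite{G}, and uniqueness at lower regularity. Two points deserve correction or sharpening.

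\textbf{Step 2.} The phrase ``dissipative gain of $\Ld^{1/2}$'' is misleading: inviscid SQG has no smoothing, and no such gain appears. What actually happens with the top-order nontangential piece $\int_\T \Ld^s\p_\g^2 x\cdot \Ld^s\big(\int_\T \p_\g^3 x_-\, g\,d\eta\big)\,d\g$ is a symmetrization in $\g\mapsto\g-\eta$ followed by integration by parts, leaving $-\tfrac14\int\int |\Ld^s\p_\g^2 x_-|^2\,\p_\g g\,d\g\,d\eta$ plus a commutator $[\Ld^s,g]$. The former closes because $|\p_\g g|\les |\eta|^{-1+2s}$ (this is exactly where the extra $s$ derivatives kill the logarithmic singularity at $\alpha=1$); the latter requires a H\"older estimate on $\p_\g g$. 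So your paraproduct/commutator plan is right in spirit, but it is a transport-type cancellation, not dissipation. Also, in the existence step, after mollifying $x^o$ you must \emph{reparametrize} so that $|\p_\g \tilde x^o_\eps|$ is again constant in $\g$; otherwise the identities (e.g.\ $\p_\g x\cdot\p_\g^2 x=0$) underpinning the a priori estimate fail for the approximants.

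\textbf{Step 4 (the real gap).} Your uniqueness plan --- ``estimate the difference in a norm slightly below $H^{2+s}$'' --- is too generic and misses the specific obstruction the paper isolates. In the $H^1$ evolution of $z=x-y$ one meets
\[
I_{312}=\int \p_\g z\cdot\int \p_\g^2 y_-\Big(\frac{1}{|x_-|}-\frac{1}{|y_-|}\Big)d\eta\,d\g,
\]
and for $s<\mez$ one cannot bound $|\p_\g^2 y_-|$ by $|\eta|^\delta$ since $\p_\g^2 y\notin C^\delta$. The paper's device is to add and subtract $\tfrac{1}{|\p_\g x||\eta|}$ and $\tfrac{1}{|\p_\g y||\eta|}$ (which makes sense precisely because $|\p_\g x|,|\p_\g y|$ are constants in $\g$), splitting $I_{312}=J_1+J_2+J_3$. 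The pieces $J_1,J_2$ are then handled by a double mean-value expansion that places $\p_\g^2 y$ in $L^2$, while $J_3$ is bounded by $C\big||\p_\g x|-|\p_\g y|\big|\,\|z\|_{H^1}$. This forces you to track the scalar quantity $\big||\p_\g x|-|\p_\g y|\big|$ alongside $\|z\|_{H^1}$ and close a coupled differential inequality for $\|z\|_{H^1}^2+\big||\p_\g x|-|\p_\g y|\big|^2$. None of this is visible in a generic ``lower-norm contraction'' plan, and it is the part of the argument that genuinely distinguishes the unbounded-curvature regime from \cite{CCG}.
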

The importance of curvature in the study of sharp temperature fronts is supported by numerical analysis and theoretical results. In \cite{CFMR}, numerical evidence of curvature blow-up of the front boundary is shown when two distinct particles on the boundary collide at the same point. Later on, \cite{GS} proved that uniform control of the curvature prevents these particle collision type singularities. Further numerics \cite{ScottDritschel} indicate curvature blow-up for different singular solutions, specifically a cascade of filament instabilities with width tending to zero in finite time. On the other hand,  \cite{GP} proved that $H^3$ solutions can be continued provided that the contour is non self-intersecting and its $W^{2, p}$ norm is controlled for some $p>2$. 

Recently there has been a lot of effort to obtain different type of SQG front solutions. \textit{Almost sharp fronts} were given in \cite{FR,FR2} establishing high order asymptotics of smoothed out sharp fronts to exact sharp fronts. \cite{CCG-S} established the first nontrivial global-in-time SQG fronts which are  uniformly rotating smooth solutions. Surprisingly there also  exist a family of nontrivial analytic stationary fronts \cite{G-S}. In the case of infinite near planar fronts, recent works \cite{HSZ4,HSZ2} gave global-in-time solutions of small slope. Existence and construction of co-rotating and traveling pairs for the SQG equations were recently given in \cite{CQZZ,CQZZ2}. The main result in this paper provides the first construction of general  fronts with low regularity, allowing unbounded curvature.

\subsection{Contour dynamics equation  and reparametrization} 
Computing the normal velocity at the patch boundary, an evolution equation for the boundary can be obtained as
\begin{equation}\label{SQGold}
\p_tx(\g, t)=\int_{\T} \frac{\p_\g x(\g, t)-\p_\g x(\g-\eta, t)}{|x(\g, t)-x(\g-\eta, t)|}d\eta.
\end{equation}
See e.g. \cite{G} for a detailed derivation. The singularity in the denominator of \eqref{SQGold} can be compared to the difference in the parameter variable by $L^{\infty}$ control of the \textit{arc chord quantity},
\begin{equation}\label{arcchordquantity}
F(x)(\gamma,\eta, t) = \frac{|\eta|}{|x(\g, t)-x(\g-\eta, t)|}.
\end{equation}
This quantity blows up if the patch self-intersects or if the parametrization artificially causes the blow up. To rule out this second possibility, one can reformulate the evolution equation to choose a parametrization such that the modulus of the tangent vector to the boundary only depends on time, i.e. 
\bq\label{At}
|\dg x(\gamma,t)|^{2} = A(t).
\eq Furthermore, this choice of parameter will yield several useful techniques to deal with the low regularity of the sharp fronts. Adding a scalar multiple of the tangent $\dg x$ to \eqref{SQGold} does not change the shape of the patch boundary because it is independent of the normal velocity. Following the derivation in a previous paper \cite{G}, the scalar multiple can be computed explicitly. This yields the needed contour evolution equation satisfying the property \eqref{At}:
\bq\label{SQGp}
\p_tx(\g, t)=\int_{\T} \frac{\p_\g x(\g, t)-\p_\g x(\g-\eta, t)}{|x(\g, t)-x(\g-\eta, t)|}d\eta+\ld(\g, t)\p_\g x(\g, t),
\eq
\bq\label{ld}
\begin{aligned}
	\ld(\g, t)&=\frac{\pi +\g}{2\pi}\int_\T \int_\T \p_\g\Big(\frac{\p_\g x(\g)-\p_\g x(\g -\eta)}{|x(\g)-x(\g-\eta)|}\Big)\cdot \frac{\p_\g x(\g)}{|\p_\g x(\g)|^2}d\eta d\g\\
	&\quad-\int_{-\pi}^\g \int_\T \p_\g\Big(\frac{\p_\g x(\xi)-\p_\g x(\xi-\eta)}{|x(\xi)-x(\xi-\eta)|}\Big) \cdot \frac{\p_\g x(\xi)}{|\p_\g x(\xi)|^2}d\eta d\xi.
\end{aligned}
\eq
Using the preceding parametrization, Theorem \ref{SQGTheorem} can be restated as 
\begin{theo}\label{mainTheorem}
Let $s\in (0, \mez)$ be arbitrary. Let $x^o(\g)\in H^{2+s}(\T)$ such that $\p_\g x^o\cdot \p_\g^2 x^o=0$ and $F(x^o)\in L^\infty(\T\times \T)$. There exists $T>0$ depending only on $\| x^o\|_{H^{2+s}}$ and $\| F(x^o)\|_{L^\infty}$ such that \eqref{SQGp}-\eqref{ld} has a unique solution $x\in C([0, T]; H^{2+s}(\T))$ satisfying $F(x)\in L^\infty(\T\times \T\times [0, T])$ and $x\vert_{t=0}=x^o$. 
\end{theo}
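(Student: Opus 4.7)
The plan is a standard well-posedness scheme adapted to the low regularity $H^{2+s}$: regularize \eqref{SQGp}--\eqref{ld}, extract uniform $H^{2+s}$ bounds through commutator estimates, pass to the limit by compactness, and prove uniqueness in a weaker norm. First I would introduce a family of mollified equations indexed by $\eps>0$ --- for instance by replacing $|\Delta_\eta x|^{-1}$ with $(|\Delta_\eta x|^2+\eps^2)^{-1/2}$ and convolving $x$ with a smooth kernel inside the denominator --- so that the right-hand side becomes a locally Lipschitz vector field on the open subset of $H^{2+s}$ curves for which $F(x)\in L^\infty$. Picard--Lindel\"of then produces regularized solutions $x^\eps$ on some short interval, and the explicit formula for $\ld^\eps$ in \eqref{ld} is designed so that the constraint $|\dg x^\eps|^2=A^\eps(t)$ is propagated in time.

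The core step is a uniform a priori estimate. I would apply $\Ld^{2+s}$ to \eqref{SQGp} and pair with $\Ld^{2+s} x$ in $L^2$. The integral on the right of \eqref{SQGp} has principal part of order one that is skew-symmetric at the top symbol, so the main contribution --- in which the $(2+s)$ derivatives all fall on the numerator difference $\dg x(\g)-\dg x(\g-\eta)$ --- symmetrizes, via the $\eta\to-\eta$ change of variable, to zero up to lower-order remainders. The symmetrization relies on the parity in $\eta$ of $|\Delta_\eta x|^{-1}$, which in turn follows from the arc-length cancellation $\dg x\cdot\dg^2 x=0$ killing the odd-in-$\eta$ part of $|\Delta_\eta x|^2$. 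Every remaining contribution is a commutator in which $\Ld^{2+s}$ lands either on the denominator $|\Delta_\eta x|^{-1}$ or on the coefficient structure of $\ld\dg x$; these I would close by Kato--Ponce-type fractional product estimates, pointwise control of the kernel via $\|F(x)\|_{L^\infty}$, and control of $|\Delta_\eta\dg x|$ by increments of $\dg^2 x\in H^s$. The arc-chord bound is propagated in parallel by differentiating $F(x^\eps)$ in time and coupling the resulting Gr\"onwall inequality with the energy. This produces a time $T>0$ depending only on $\|x^o\|_{H^{2+s}}$ and $\|F(x^o)\|_{L^\infty}$ on which $\{x^\eps\}$ is uniformly bounded in $L^\infty_T H^{2+s}$; Aubin--Lions then extracts a limit $x\in C_T H^{2+s}$ solving \eqref{SQGp}--\eqref{ld}.

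For uniqueness I would take two solutions $x_1,x_2$ sharing the same initial data, derive a linear equation for the difference $w=x_1-x_2$, and run a Gr\"onwall estimate at the level of $L^2(\T)$ (or, if necessary, $H^\sigma$ for some small $\sigma$) using the already-established $H^{2+s}$ bounds and bounded arc chord on each $x_i$. The main obstacle throughout is exactly the commutator analysis at the fractional regularity $2+s$: one must control every contribution in which the top $(2+s)$ derivatives land on the rough factor $|\Delta_\eta x|^{-1}$, using only that $\dg^2 x\in H^s$ and may therefore be pointwise unbounded. The assumption $s>0$ is what allows these fractional commutators to close, and $s<\mez$ keeps one within the fractional regime; this is precisely the step where the sub-critical ($\alpha<1$) technique of \cite{GP} and the $H^3$ method of \cite{G} cease to apply directly, and where new estimates are required to handle the SQG case with unbounded curvature.
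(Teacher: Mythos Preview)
Your existence outline is reasonable in spirit, though it differs from the paper's: rather than regularizing the equation, the paper mollifies the \emph{initial data}, reparametrizes to restore the constraint $\dg|\dg x^o_\eps|=0$, and then invokes the known $H^3$ existence result of \cite{G} to produce approximate solutions on which the $H^{2+s}$ a priori estimates are run. Your approach of regularizing the kernel is viable in principle, but you must check carefully that the modified equation with $\ld^\eps$ still propagates $|\dg x^\eps|^2=A^\eps(t)$ exactly; the derivation of \eqref{ld} uses the precise structure of the kernel, and a perturbed denominator will in general change the required $\ld$.

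The genuine gap is in uniqueness. A contraction estimate in $L^2$ or in $H^\sigma$ for small $\sigma$ does \emph{not} close when $s\in(0,\tfrac12)$. Already at the $L^2$ level one only gets $\tfrac{d}{dt}\|z\|_{L^2}^2\le C\|z\|_{H^1}^2$, so one is forced up to $H^1$. There the dangerous term is
\[
I_{312}=\int \dg z(\g)\cdot\int \dg^2 y_-(\g,\eta)\Big(\frac{1}{|x_-|}-\frac{1}{|y_-|}\Big)d\eta\,d\g,
\]
and for $s<\tfrac12$ one has $\dg^2 y\notin C^\delta$, so the naive bound $|\dg^2 y_-|\lesssim|\eta|^\delta$ is unavailable and the $\eta$-integral is logarithmically divergent. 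The paper's resolution is to peel off the most singular piece $\frac{1}{|\dg x||\eta|}-\frac{1}{|\dg y||\eta|}$, which produces a term controlled by $\big||\dg x|-|\dg y|\big|\,\|z\|_{H^1}$ rather than $\|z\|_{H^1}^2$, and then to close the system by deriving a \emph{separate} evolution inequality for the scalar $\big||\dg x(t)|-|\dg y(t)|\big|$ (which depends only on $t$) and coupling it to the $H^1$ estimate via Gr\"onwall on the sum $\|z\|_{H^1}^2+\big||\dg x|-|\dg y|\big|^2$. This coupled contraction is the new idea needed for unbounded curvature, and your proposal does not contain it.
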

\subsection{On the proof}
The right side of equation \eqref{SQGp} has a logarithmic singularity $1/|\eta|$. This singularity is a reason for the $H^2$ existence result in \cite{GP} only holding for gSQG with $\alpha<1$, in which case the denominator $|x(\g, t)-x(\g-\eta, t)|$ is replaced by $|x(\g, t)-x(\g-\eta, t)|^\alpha$. In order to reach  $\alpha=1$, our key idea is to work in $H^{2+s}$ and use the extra $s$ derivative to compensate the logarithmic singularity. The trade-off is that the control of the fractional Sobolev regularity $H^{2+s}$ induces various commutators. A priori estimates in $H^{2+s}$ are established in Sections \ref{sec:nontangentialestimates} and \ref{sec:tangentialestimates} using appropriate commutator estimates. These estimates in turn require in particular the control of $\ld$ in the H\"older and Sobolev spaces $C^{1+\eps}$ and $H^\tdm$, whose proof delicately uses the full $H^{2+s}$ regularity. See subsection \ref{subsec:ld}. A priori estimates for the arc chord quantity \eqref{arcchordquantity} also require new arguments and are obtained in Section \ref{sec:arcchord}. All the a priori estimates are gathered in Section \ref{sec:existence} to conclude the existence of solutions via regularizing initial data. The regularization procedure requires some care in order to guarantee the property \eqref{At} carries through the approximation scheme. %A particular treatment for the initial data is needed, to be consistent with property \eqref{At}.}

The uniqueness part is subtle. Recall that the uniqueness result in \cite{CCG} for \eqref{SQGp}-\eqref{ld} was obtained via an $H^1$ contraction estimate. For unbounded  curvature this approach does not work neither for $L^2$ nor $H^2$ contraction estimate. To get a glimpse of  the difficulty, let us consider one of the more singular terms in the $H^{1}$ evolution of $z(\gamma,t) = x(\gamma,t)-y(\gamma,t)$, the difference of two solutions $x$ and $y$ in $H^{2+s}$:
\begin{equation*}
	I_{312}= \int \dg z(\gamma)\cdot \int (\dg^2 y(\gamma)-\dg^{2}y(\gamma-\eta))\Big(\frac{1}{|x(\gamma)- x(\gamma-\eta)|}-\frac{1}{|y(\gamma)- y(\gamma-\eta)|}\Big) d\e d\g.
\end{equation*}
In the regime of bounded curvature, guaranteed when $s\in (\mez, 1)$, we have that $\dg^{2}x,~\dg^{2}y \in C^{\delta}$ with $\delta=s-\mez$. We can then control 
$$I_{312} \lesssim \|\dg z\|_{L^{2}}^{2}\|\dg^2 y\|_{C^{\delta}}\|F(x)\|_{L^{\infty}_{\gamma,\eta}}\|F(y)\|_{L^{\infty}_{\gamma,\eta}}\int \frac{1}{|\eta|^{1-\delta}} d\e$$
which is integrable. Note that $C^2$ regularity would not suffice for this argument.  For $s\in (0, \mez)$, our idea is to prove contraction estimates not only for the $H^1$ norm but also the moduli of the tangential vectors $|\dg x(\gamma,t)| - |\dg y(\gamma,t)|$ to the patches. The entire argument has to avoid the use of the $C^2$ norm of the solution through detailed decompositions of terms. This is done in Section \ref{sec:uniqueness}.
\begin{rema}
Both a priori and contraction estimates crucially use the full $H^{2+s}$ regularity. Whether SQG sharp fronts are well-posed in $H^2$ remains an open problem. On the other hand, for higher regularity $H^{2+s}$ with $s\in [\mez, 1)$, local well-posedness should hold by modifications  of our proof for the existence part. 
\end{rema}

\section{ A  priori estimates for nontangential nonlinearity}\label{sec:nontangentialestimates}
	For notational simplicity we denote
\begin{align}\label{def:x-}
	&x_-(\gamma, \eta)=x(\gamma)-x(\gamma-\eta),\\ \label{def:g}
	& g=g(\g, \eta)=\frac{1}{|x_-(\g, \eta)|}.
\end{align}
When it is clear that $\g$ and $\eta$ are the variables,  we use the shorthand 
\bq
x_-=x_-(\g)=x_-(\gamma, \eta).
\eq
We shall also write for fixed $t$, 
\bq
\| F(x)\|_{L^\infty}=\| F(x)(\cdot, \cdot, t)\|_{L^\infty(\T\times \T)}.
\eq
We assume throughout this section  that $x\in C([0, T]; H^3)$ is a solution of \eqref{SQGp}-\eqref{ld}. 	Fixing $s\in (0, \mez)$, our goal is to establish  a priori estimates for $H^{2+s}$ norm of $x$. For high frequencies, we differentiate  \eqref{SQGp} twice in $\gamma$ to obtain
	\bq\label{eq:d2}
	\begin{aligned}
		\p_t \p_\g^2x&=\int_\T \p_\g^3x_- gd\eta+2\int_\T \p_\g^2x_-\p_\g gd\eta+\int_\T \p_\g x_-\p_\g^2 g d \eta
		%=\int_\T \frac{\p_\g^3 x_-}{|x_-|^\alpha}d\eta+\alpha(\alpha+2)\int_\T \p_\g x_-\frac{(x_-\cdot\p_\g x_-)^2}{|x_-|^{\alpha+4}}d\eta\\
		%&\quad-2\alpha\int_\T \p_\g^2 x_-\frac{x_-\cdot\p_\g x_-}{|x_-|^{\alpha+2}}d\eta-\alpha\int_\T \p_\g x_-\frac{|\p_\g x_-|^2+x_-\cdot \p_\g^2x_-}{|x_-|^{\alpha+2}}d\eta\\
		+\ld \p_\g^3x+2\p_\g\ld \p_\g^2x+\p_\g^2\ld \p_\g x\\
		&:=\cN_1+\cN_2+\cN_3+\cT_1+\cT_2+\cT_3.
	\end{aligned}
	\eq
	We apply $\Ld^s$ to \eqref{eq:d2},  multiply the resulting equation by $\Ld^{s}\p_\g^2x$, then integrate over $\T$, leading to
	\bq\label{evol:Hs}
	\mez \frac{d}{dt}\| \Ld^s\p_\g^2 x\|_{L^2}^{2}=\sum_{j=1}^3 N_j+\sum_{j=1}^3 T_j.
	\eq
	We call $T_j$ tangential terms and $N_j$ nontangential terms. This section is devoted to a priori estimates for the nontangential terms. Tangential terms shall be considered in Section \ref{sec:tangentialestimates}.
	\subsection{Preliminaries}
	We shall frequently appeal to the identity 
	\bq\label{identity1}
	x_-\cdot \p_\g x_-=(x_--\eta \p_\g x(\g))\cdot \p_\g x_-+\mez \eta |\p_\g x_-|^2.
	\eq
	For the proof of \eqref{identity1} we use the fact that $|\p_\g x(\g, t)|^{2}=A(t)$ is independent of $\g$  to obtain
	\bq\label{dxdx-}
	\begin{aligned}
		\p_\g x(\gamma, t)\cdot \p_\g x_-(t)&=|\p_\g x(\gamma, t)|^2-\p_\g x(\gamma, t)\cdot \p_\g x(\gamma-\eta, t)\\
		%&=A(t)^2-\p_\g x(\gamma, t)\cdot \p_\g x(\gamma-\eta, t)\\
		%&=\mez\Big(2A(t)^2-2\p_\g x(\gamma, t)\cdot \p_\g x(\gamma-\eta, t)\Big)\\
		&=\mez\Big(|\p_\g x(\gamma, t)|^2+|\p_\g x(\gamma-\eta, t)|^2-2\p_\g x(\gamma, t)\cdot \p_\g x(\gamma-\eta, t)\Big)=\mez  |\p_\g x_-|^2.
	\end{aligned}
	\eq
	\begin{lemm}
		For all $\eps\in [0, \mez)$,  we have
		\bq\label{bound:gL}
		| \p_\g g|\les \frac{1}{|\eta|^{1-2\eps}}\| F(x)\|_{L^\infty}^3\|\p_\g x\|_{C^{\mez+\eps}}^2
		\eq
		and
		\bq\label{bound:gC}
		\| \p_\g g(\cdot, \eta, t)\|_{\dot C^{s+\eps}}\les \frac{1}{|\eta|^{1-s+\eps}}\| F(x)\|_{L^\infty}^3\|\p_\g x\|^2_{C^{\mez+s}}+\frac{1}{|\eta|^{1-2s}}\| F(x)\|_{L^\infty}^5\| \p_\g x\|_{C^{\mez+s}}^2\| \p_\g x\|^2_{C^{\mez}}.
		\eq
	\end{lemm}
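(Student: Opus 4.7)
The starting point is $\p_\g g = -\frac{x_-\cdot \p_\g x_-}{|x_-|^3}$, combined with the identity \eqref{identity1}:
\begin{equation*}
x_-\cdot\p_\g x_- = (x_- - \eta\p_\g x(\g))\cdot \p_\g x_- + \tfrac{\eta}{2}|\p_\g x_-|^2.
\end{equation*}
This decomposition is the heart of both estimates: Taylor's theorem gives $x_- - \eta\p_\g x(\g) = \int_0^\eta(\p_\g x(\g-\tau)-\p_\g x(\g))d\tau$, so $|x_- - \eta\p_\g x(\g)| \les |\eta|^{\tdm+\eps'}\|\p_\g x\|_{C^{\mez+\eps'}}$, while $|\p_\g x_-|\les |\eta|^{\mez+\eps'}\|\p_\g x\|_{C^{\mez+\eps'}}$; each summand in the identity therefore supplies a full factor $|\eta|^{2+2\eps'}$ that the naive Cauchy--Schwarz bound $|x_-||\p_\g x_-|$ would not give.

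For \eqref{bound:gL} I would simply combine these two pointwise bounds with $|x_-|^{-3}\leq \|F(x)\|_{L^\infty}^3|\eta|^{-3}$ to read off $|\p_\g g|\les |\eta|^{-1+2\eps}\|F(x)\|_{L^\infty}^3\|\p_\g x\|_{C^{\mez+\eps}}^2$ directly.

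For \eqref{bound:gC} I would split on the relative size of $|\g-\g'|$ and $|\eta|$. \textit{Long range} ($|\g-\g'|\geq |\eta|$): apply \eqref{bound:gL} with $\eps$ replaced by $s$ to control $|\p_\g g(\g,\eta)|+|\p_\g g(\g',\eta)|$ and divide by $|\g-\g'|^{s+\eps}\geq |\eta|^{s+\eps}$; this yields $|\eta|^{-1+s-\eps}\|F(x)\|_{L^\infty}^3\|\p_\g x\|_{C^{\mez+s}}^2$, the first term of \eqref{bound:gC}. \textit{Short range} ($|\g-\g'|<|\eta|$): setting $f=x_-\cdot\p_\g x_-$, decompose
\begin{equation*}
\p_\g g(\g)-\p_\g g(\g') = -\frac{f(\g)-f(\g')}{|x_-(\g)|^3} - f(\g')\Big(\frac{1}{|x_-(\g)|^3}-\frac{1}{|x_-(\g')|^3}\Big).
\end{equation*}
For the first summand, the H\"older difference $|f(\g)-f(\g')|$ is estimated by the product rule applied to the two factors in \eqref{identity1}, combining the direct H\"older bound $|\p_\g x_-(\g)-\p_\g x_-(\g')|\leq 2\|\p_\g x\|_{C^{\mez+s}}|\g-\g'|^{\mez+s}$ with the pointwise bound $|\p_\g x_-|\les |\eta|^{\mez+s}\|\p_\g x\|_{C^{\mez+s}}$ (and their analogues for $x_- - \eta\p_\g x(\g)$); a $\min$-interpolation then yields $|f(\g)-f(\g')|\les |\eta|^{2+s-\eps}\|\p_\g x\|_{C^{\mez+s}}^2|\g-\g'|^{s+\eps}$, and dividing by $|x_-(\g)|^3$ reproduces the first term. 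For the second summand, the mean value theorem gives $|1/|x_-(\g)|^3-1/|x_-(\g')|^3|\leq |\g-\g'|\sup|\p_\g(1/|x_-|^3)|$; applying \eqref{identity1} once more to $\p_\g(1/|x_-|^3)=-3(x_-\cdot\p_\g x_-)/|x_-|^5$ (now needing only the $C^{\mez}$ norm, since an $L^\infty$ bound on the derivative rather than a H\"older quotient is required) gives $|\p_\g(1/|x_-|^3)|\les |\eta|^{-3}\|F(x)\|_{L^\infty}^5\|\p_\g x\|_{C^{\mez}}^2$; combining with $|f(\g')|\les |\eta|^{2+2s}\|\p_\g x\|_{C^{\mez+s}}^2$ and $|\g-\g'|^{1-s-\eps}\leq |\eta|^{1-s-\eps}$ delivers the second term.

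\emph{Main obstacle.} The cancellation in \eqref{identity1} must be exploited at \textit{every} occurrence of $\p_\g$ acting on a negative power of $|x_-|$; otherwise one loses a power of $|\eta|$ and the resulting Hölder estimate is not integrable against $d\eta$ in the subsequent nontangential estimates. The hybrid structure $\|\p_\g x\|_{C^{\mez+s}}^2\|\p_\g x\|_{C^{\mez}}^2$ in the second term reflects that the $L^\infty$ bound on $f$ requires the full $C^{\mez+s}$ regularity to secure the $|\eta|^{2+2s}$ gain, while the mean value estimate on $1/|x_-|^3$ needs only $C^{\mez}$; the main bookkeeping challenge is to match the right regularity to each factor so that both $|\eta|$-singularities in the target bound come out cleanly.
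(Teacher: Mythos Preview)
Your proof is correct and follows the same skeleton as the paper: both rely on the identity \eqref{identity1} to gain the extra power of $|\eta|$, prove \eqref{bound:gL} by the same Taylor bounds, and for \eqref{bound:gC} split the finite difference of $\p_\g g$ into a numerator piece and a denominator piece (the latter handled by the mean value theorem and \eqref{bound:gL} applied to $\p_\g(|x_-|^{-3})$).

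There is one genuine methodological difference worth recording. For the key numerator estimate---controlling $A:=\p_\g x_-(\g)-\p_\g x_-(\g')$ by $|\g-\g'|^{s+\eps}|\eta|^{\mez-\eps}\|\p_\g x\|_{C^{\mez+s}}$---the paper views $A$ as a linear operator in $x$, obtains the endpoint bounds $|A|\les |\eta||h|^a\|\p_\g^2 x\|_{C^a}$ and $|A|\les |h|^b\|\p_\g x\|_{C^b}$, and then \emph{interpolates between the two H\"older norms} to land at $C^{\mez+s}$. You instead take both endpoint bounds with the \emph{same} norm $\|\p_\g x\|_{C^{\mez+s}}$ (namely $|A|\le 2|\g-\g'|^{\mez+s}\|\p_\g x\|_{C^{\mez+s}}$ and $|A|\le 2|\eta|^{\mez+s}\|\p_\g x\|_{C^{\mez+s}}$) and apply the elementary inequality $\min(X,Y)\le X^\theta Y^{1-\theta}$. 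This reaches the identical bound without any interpolation-of-spaces machinery, so your route is strictly more elementary here; the paper's route, on the other hand, would adapt more readily if the two endpoints lived in genuinely different scales.

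A minor remark: your long/short range split is not actually needed. The $\min$-interpolation for the numerator is valid for all $h$, and for the denominator the factor $|\g-\g'|^{1-s-\eps}$ is simply bounded on $\T$, which already yields the second term of \eqref{bound:gC}. The paper proceeds without any such split.
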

	\begin{proof}
		In view of \eqref{identity1} we have
		\bq\label{dg}
		\p_\g g=-\frac{(x_--\eta \p_\g x(\g))\cdot \p_\g x_-}{|x_-|^{3}}-\frac{1}{2}\frac{\eta |\p_\g x_-|^2}{|x_-|^{3}}:=- m_1(\g, \eta)-\frac{1}{2} m_2(\g, \eta).
		\eq
		By Taylor expansion,  it is readily seen that
		\[
		|(x_--\eta \p_\g x(\g))|\le |\eta|^{\tdm+\eps}\| \p_\g x\|_{C^{\mez+\eps}},\quad |\p_\g x_-|\le |\eta|^{\mez+\eps}\|\p_\g x\|_{C^{\mez+\eps}}.
		\]
		Consequently,
		\[
		| \p_\g g|\les \| F(x)\|_{L^\infty}^{3}\|\p_\g x\|_{C^{\mez+\eps}}^2\frac{|\eta|^{\tdm+\eps+\mez+\eps}+|\eta|^{1+2(\mez+\eps)}}{|\eta|^{3}},
		\]
		whence \eqref{bound:gL} follows.
		
		We turn to prove \eqref{bound:gC}. For $|h|\le \pi$ we consider the difference $\p_\g g(\g, \eta)-\p_\g g(\g+h, \eta)$. First, 
		\[
		\begin{aligned}
			m_2(\g, \eta)-m_2(\g+h, \eta)=& \eta \frac{|\p_\g x_-(\g, \eta)|^2}{|x_-(\g, \eta)|^{3}}-  \eta \frac{|\p_\g x_-(\g+h, \eta)|^2}{|x_-(\g+h, \eta)|^{3}}\\
			&=  \eta \frac{|\p_\g x_-(\g, \eta)|^2-|\p_\g x_-(\g+h, \eta)|^2}{|x_-(\g, \eta)|^{3}}\\
			&\quad+ \eta|\p_\g x_-(\g+h, \eta)|^2\Big(\frac{1}{|x_-(\g, \eta)|^{3}}-\frac{1}{|x_-(\g+h, \eta)|^{3}}\Big)\\
			&:=I+II.
		\end{aligned}
		\]
		We write
		\[
		\begin{aligned}
			&|\p_\g x_-(\g, \eta)|^2-|\p_\g x_-(\g+h, \eta)|^2\\
			&\quad=\Big[\p_\g x_-(\g, \eta)-\p_\g x_-(\g+h, \eta)\Big]\cdot \Big[\p_\g x_-(\g, \eta)+\p_\g x_-(\g+h, \eta) \Big]:=A\cdot B.
		\end{aligned}
		\]
		Clearly, 
		\[
		|B|\les |\eta|^{\mez+s}\| \p_\g x\|_{C^{\mez+s}}.
		\] 
		On one hand,
		\[
		|A|=|\eta\int_0^1[\p_\g^2 x(\g-r\eta)-\p_\g^2 x(\g+h-r\eta)]dr|\les |\eta| |h|^a\|\p_\g^2 x\|_{C^a}\quad \forall a\in (0, 1).
		\]
		On the other  hand, using the mean value theorem gives
		\[
		|A|=|\p_\g x_-(\g,-h)-\p_\g x_-(\g-\eta, -h)|\le |\p_\g x_-(\g,-h)|+|\p_\g x_-(\g-\eta, -h)|\les  |h|^b\| \p_\g x\|_{C^b}\quad\forall b\in (0, 1).
		\]  
		Therefore, regarding $A$ as a linear operator of $x$, we deduce by interpolation that
		\[
		|A| \les |h|^{\tt a+(1-\tt)b}|\eta|^\tt\|\p_\g x\|_{C^{\tt(1+a)+(1-\tt)b}}\quad \forall \tt\in (0, 1).
		\]
		Now for any $\eps\in [0, \mez)$ we choose $\tt=\mez-\eps$ and $a, b\in (0, 1)$ satisfying $\tt(1+a)+(1-\tt)b=s+\mez$. Then $ \tt a+(1-\tt)b=s+\mez-\tt=s+\eps$ and
		\bq\label{est:A}
		|A| \les |h|^{s+\eps}|\eta|^{\mez-\eps}\|\p_\g x\|_{C^{s+\mez}}.
		\eq
		Combining this with the above estimate for $B$ we get
		\bq\label{boundg:H:1}
		|I|\les \| F(x)\|_{L^\infty}^{3}\frac{|h|^{s+\eps}}{|\eta|^{1-s+\eps}}\|\p_\g x\|^2_{C^{\mez+s}}.
		\eq
		As for $II$ we first note that 
		\[
		|\p_\g x_-(\g+h, \eta)|^2\le |\eta|\| \p_\g x\|_{C^{\mez}}^2.
		\]
		Setting  $k(\g, \eta)=|x_-(\g, \eta)|^{-3}$ we have 
		\[
		\p_\g k(\g, \eta)=3 \frac{\p_\g g(\g, \eta)}{|x_-(\g)|^{2}}.
		\]
		Then using  the fundamental theorem of calculus and the bound  \eqref{bound:gL} for $\p_\g g$ we obtain 
		\bq\label{diff:k}
		\begin{aligned}
			&|k(\g +h, \eta)-k(\g, \eta)|
			%&\les  \frac{|h|}{|\eta|^2|\eta|^{\alpha-2s}}\| F(x)\|_{L^\infty}^{\alpha+2}\| \p_\g x\|_{C^{\mez+s}}^2\int_0^1  \frac{|\eta|^2dr}{|x_-(\g+rh, \eta)|^{2}}\\
			&  \les  \frac{|h|}{|\eta|^2|\eta|^{1-2s}}\| F(x)\|_{L^\infty}^{5}\| \p_\g x\|_{C^{\mez+s}}^2.
		\end{aligned}
		\eq
		We have proved
		\bq\label{boundg:H:2}
		| II|\les \frac{|h|}{|\eta|^{1-2s}}\| F(x)\|_{L^\infty}^{5}\| \p_\g x\|_{C^{\mez+s}}^2\| \p_\g x\|^2_{C^{\mez}}
		\eq
		%A combination of \eqref{boundg:H:1} and \eqref{boundg:H:2} leads to the H\"older estimate \eqref{bound:gC}.
		Next, we have
		\begin{align*}
			&m_1(\g, \eta)-m_1(\g+h, \eta)\\
			&=\frac{[x_-(\g, \eta)-\eta \p_\g x(\g)]\cdot \p_\g x_-(\g, \eta)}{|x_-(\g, \eta)|^3}-\frac{[x_-(\g+h, \eta)-\eta \p_\g x(\g+h)]\cdot \p_\g x_-(\g+h, \eta)}{|x_-(\g+h, \eta)|^3}\\\
			&=\frac{[x_-(\g, \eta)-\eta \p_\g x(\g)]\cdot \p_\g x_-(\g, \eta)-[x_-(\g+h, \eta)-\eta \p_\g x(\g+h)]\cdot \p_\g x_-(\g+h, \eta)}{|x_-(\g, \eta)|^3}\\
			&\quad+[x_-(\g+h, \eta)-\eta \p_\g x(\g+h)]\cdot \p_\g x_-(\g+h, \eta)\Big(\frac{1}{|x_-(\g, \eta)|^3}-\frac{1}{|x_-(\g+h, \eta)|^3}\Big)\\
			&=III+IV.
		\end{align*}
		Since 
		\[
		\left|[x_-(\g+h, \eta)-\eta \p_\g x(\g+h)]\cdot \p_\g x_-(\g+h, \eta)\right|\le |\eta|^2\| \p_\g x\|_{C^{\mez}}^2.
		\]
		Combining this and \eqref{diff:k} we deduce that $IV$ obeys the same bound as $II$:
		\bq\label{boundg:H:4}
		|IV| \les \frac{|h|}{|\eta|^{1-2s}}\| F(x)\|_{L^\infty}^5\| \p_\g x\|_{C^{\mez+s}}^2\| \p_\g x\|^2_{C^{\mez}}.
		\eq
		We write 
		\[
		\begin{aligned}
			III&=\frac{1}{|x_-(\g, \eta)|^3}
			\p_\g x_-(\g, \eta)\cdot [x_-(\g, \eta)-\eta \p_\g x(\g)-x_-(\g+h, \eta)+\eta \p_\g x(\g+h)]\\
			&\quad+\frac{1}{|x_-(\g, \eta)|^3}
			[\p_\g x_-(\g, \eta)-\p_\g x_-(\g+h, \eta)]\cdot [x_-(\g+h, \eta)-\eta \p_\g x(\g+h)]\\
			&=III_1+III_2.
		\end{aligned}
		\]
		Combining the estimate
		\[
		|x_-(\g+h, \eta)-\eta \p_\g x(\g+h)|\le|\eta|^{\tdm+s}\|\p_\g x\|_{C^{\mez+s}}
		\]
		and \eqref{est:A} we obtain
		\[
		|III_2|\les \| F(x)\|_{L^\infty}^3\frac{|h|^{s+\eps}}{|\eta|^{1-s+\eps}}\|\p_\g x\|^2_{C^{\mez+s}}
		\quad\forall \eps\in [0, \mez).
		\]
		By the mean value theorem,
		\begin{align*}
		C&:=x_-(\g, \eta)-\eta \p_\g x(\g)-x_-(\g+h, \eta)+\eta \p_\g x(\g+h)\\
		&=-\frac{\eta^2}{2}\int_0^1(1-r)[\p_\g^2 x(\g-r\eta)-\p_\g^2 x(\g+h-r\eta)]dr
		\end{align*}
		and thus 
		\[
		|C| \les \eta^2 |h|^a\|\p_\g^2 x\|_{C^a}\quad \forall a\in (0, 1).
		\]
		On the other hand, since 
		\[
		C=x_-(\g, -h)-x_-(\g-\eta, -h)-\eta\p_\g x_-(\g, -h)=\eta \int_0^1 \p_\g x_-(\g-r\eta, -h)dr-\eta\p_\g x_-(\g, -h)
		\]
		we get
		\[
		|C|\les |\eta||h|^b\| \p_\g x\|_{C^b}\quad\forall b\in (0, 1).
		\]
		By an interpolation argument as for \eqref{est:A} we obtain 
		\[
		|C| \les |h|^{s+\eps}|\eta|^{\tdm-\eps}\|\p_\g x\|_{C^{s+\mez}}\quad\forall \eps\in [0, \mez).
		\]
		Consequently, 
		\[
		\begin{aligned}
			|III_1|&\les \| F(x)\|_{L^\infty}^3\frac{|\eta|^{\mez+s}}{|\eta|^3}\|\p_\g x\|_{C^{s+\mez}}|h|^{s+\eps}|\eta|^{\tdm-\eps}\|\p_\g x\|_{C^{s+\mez}}\\
			&\les \| F(x)\|_{L^\infty}^3\frac{|h|^{s+\eps}}{|\eta|^{1-s+\eps}}\|\p_\g x\|^2_{C^{\mez+s}}\quad\forall \eps\in [0, \mez).
		\end{aligned}
		\]
		We have proved
		\bq\label{boundg:H:3}
		|III| \les \| F(x)\|_{L^\infty}^3\frac{|h|^{s+\eps}}{|\eta|^{1-s+\eps}}\|\p_\g x\|^2_{C^{\mez+s}}\quad\forall \eps\in [0, \mez).
		\eq
		Putting together \eqref{boundg:H:1}, \eqref{boundg:H:2}, \eqref{boundg:H:4} and \eqref{boundg:H:3}  we conclude the proof of \eqref{bound:gC}.
	\end{proof}
	\begin{lemm}\label{lemm:cmt}
		For all $r\in (0, 1)$ and $\delta>0$, there exists $C>0$ such that
		\bq\label{cmt:1}
		\| [\Ld^r, u]v\|_{L^2}\le C \| u\|_{B^{1+\delta}_{\infty, \infty}}\| v\|_{H^{r-1}}
		\eq
		provided that the right-hand side is finite. 
	\end{lemm}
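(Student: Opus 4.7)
My plan is to use Bony's paraproduct decomposition $fg = T_f g + T_g f + R(f,g)$ with $T_f g = \sum_j S_{j-2}f\,\Delta_j g$. Applying this to both $uv$ and $u\Ld^r v$ splits the commutator as
\bq\label{plan:splitting}
[\Ld^r, u]v = [\Ld^r, T_u]v + \bigl(\Ld^r T_v u - T_{\Ld^r v} u\bigr) + \bigl(\Ld^r R(u,v) - R(u, \Ld^r v)\bigr).
\eq
I will bound each summand by $\|u\|_{B^{1+\delta}_{\infty,\infty}}\|v\|_{H^{r-1}}$, relying on the two dyadic bounds $\|\Delta_j u\|_{L^\infty} \les 2^{-j(1+\delta)}\|u\|_{B^{1+\delta}_{\infty,\infty}}$ and $\|\Delta_k v\|_{L^2} \le c_k 2^{-k(r-1)}\|v\|_{H^{r-1}}$ with $\{c_k\} \in \ell^2$.

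For the low-high commutator $[\Ld^r, T_u]v = \sum_j [\Ld^r, S_{j-2}u]\Delta_j v$, the spectral separation between $S_{j-2}u$ (ball of radius $\approx 2^{j-2}$) and $\Delta_j v$ (annulus at scale $2^j$) permits a Taylor expansion of the symbol $|\xi|^r$ in the frequency of $S_{j-2}u$, producing the standard Coifman-Meyer paraproduct commutator bound $\|[\Ld^r, T_u]v\|_{L^2} \les \|\nabla u\|_{L^\infty}\|v\|_{\dot H^{r-1}}$. The embedding $B^{1+\delta}_{\infty,\infty} \hookrightarrow C^{1,\delta}$ (valid for $\delta \in (0,1)$) absorbs $\|\nabla u\|_{L^\infty}$ into the Besov norm. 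For the high-low pieces, I bound $\Ld^r T_v u$ and $T_{\Ld^r v}u$ separately: each piece $S_{j-2}v \cdot \Delta_j u$ has Fourier support in the annulus $|\xi|\sim 2^j$, and using $\|S_{j-2}v\|_{L^2}\les 2^{j(1-r)}\|v\|_{H^{r-1}}$ (which requires $r < 1$) I get
\bq
\|\Ld^r(S_{j-2}v \cdot \Delta_j u)\|_{L^2} \les 2^{jr}\|S_{j-2}v\|_{L^2}\|\Delta_j u\|_{L^\infty} \les 2^{-j\delta}\|u\|_{B^{1+\delta}_{\infty,\infty}}\|v\|_{H^{r-1}}.
\eq
Almost orthogonality sums this in $\ell^2_j$, and the geometric series $\sum_j 2^{-2j\delta}$ converges exactly because $\delta > 0$. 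The term $T_{\Ld^r v}u$ is handled identically via $\|S_{j-2}\Ld^r v\|_{L^2}\les 2^{j}\|v\|_{H^{r-1}}$.

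For the remainder pieces, each summand $\Delta_k u \, \wt\Delta_k v$ has Fourier support only in a ball of radius $\sim 2^k$ rather than an annulus, so almost orthogonality in $k$ is lost and I instead dyadically decompose the output. The pointwise bound $\|\Delta_k u \, \wt\Delta_k v\|_{L^2} \les c_k 2^{-k(r+\delta)}\|u\|_{B^{1+\delta}_{\infty,\infty}}\|v\|_{H^{r-1}}$ and tail summation in $k \ge j-C$ yield $\|\Delta_j R(u,v)\|_{L^2}\les 2^{-j(r+\delta)}\|u\|_{B^{1+\delta}_{\infty,\infty}}\|v\|_{H^{r-1}}$, from which $\|\Ld^r R(u,v)\|_{L^2}^2 \les \sum_j 2^{2jr}\cdot 2^{-2j(r+\delta)}$ converges. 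The piece $R(u, \Ld^r v)$ is controlled analogously using $\|\wt\Delta_k \Ld^r v\|_{L^2}\les c_k 2^k \|v\|_{H^{r-1}}$. The main subtlety to watch in the write-up is the Fourier-support bookkeeping that distinguishes almost-orthogonal summation from summation after dyadic projection of the output, together with consistent exploitation of the margin $\delta > 0$ to close each resulting geometric series; at $\delta = 0$ one lands in the Zygmund class for which $\nabla u$ need not be bounded and the argument genuinely breaks.
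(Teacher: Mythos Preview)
Your proposal is correct and follows essentially the same route as the paper: both proofs split the commutator via the Bony decomposition \eqref{plan:splitting} and bound the five resulting pieces separately, using the paraproduct commutator estimate for $[\Ld^r,T_u]v$, direct paraproduct bounds for $\Ld^r T_v u$ and $T_{\Ld^r v}u$, and the remainder estimate for the $R$-terms. The only cosmetic difference is that the paper invokes the packaged estimates \eqref{cmt:pp}, \eqref{pp:new1}, \eqref{BonyR} from its appendix (together with the embedding $B^{1+\delta}_{\infty,\infty}\subset B^1_{\infty,2}$), whereas you unwind those same dyadic computations by hand; your explicit identification of where $r<1$ and $\delta>0$ are each used matches the paper's implicit reliance on those hypotheses.
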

	\begin{proof}
		Using the Bony decomposition $uv=T_uv+T_vu+R(u, v)$  we write
		\[
		[\Ld^r, u]v= [\Ld^r, T_u]v+\Ld^r(T_vu)+\Ld^rR(u, v)-T_{\Ld^r v}u-R(\Ld^rv, u).
		\]
		It follows from paraproduct rules that 
		% \begin{align*}
		% \| [\Ld^s, T_g]h\|_{L^2}&\les \| \na g\|_{L^\infty}\| h\|_{H^{s-1}},\\
		% \| \Ld^s(T_hg)\|_{L^2}&\les \| T_hg\|_{\dot H^s}\les \| g\|_{\dot C^{1+\eps}_*}\| h\|_{\dot H^{s-1}},\\
		%  \| \Ld^sR(g, h)\|_{L^2}&\les \| R(g, h)\|_{\dot H^s}\les \| g\|_{\dot C^1_*}\| h\|_{\dot H^{s-1}},\\
		%   \| T_{\Ld^s h}g\|_{L^2}&\les \| g\|_{\dot C^{1+\eps}_*}\| \Ld^s h\|_{\dot H^{-1}}\les \| g\|_{\dot C^{1+\eps}_*}\| h\|_{\dot H^{s-1}},\\
		%    \| R(\Ld^sg, h)\|_{L^2}&\les \| \Ld^s g\|_{C^{1-s+\eps}_*}\| h\|_{H^{s-1}}\les  \| \Ld^s g\|_{\dot C^{1-s+\eps}_*}\| h\|_{H^{s-1}}\\
		%     &\quad\quad\les  \|  g\|_{\dot C^{1+\eps}_*}\| h\|_{H^{s-1}}.
		% \end{align*}
		\begin{align*}
			\| [\Ld^r, T_u]v\|_{L^2}&\les \|  \na u\|_{L^\infty}\| v\|_{H^{r-1}}\quad \eqref{cmt:pp},\\
			\| \Ld^r(T_vu)\|_{L^2}&\les \| T_vu\|_{ H^r}\les \| u\|_{B^{1}_{\infty, 2}}\| v\|_{ H^{r-1}}\quad\eqref{pp:new1},\\
			\| \Ld^rR(u, v)\|_{L^2}&\les \| R(u, v)\|_{ H^r}\les \| u\|_{B^{1}_{\infty, \infty}}\| v\|_{ H^{r-1}}\quad\eqref{BonyR},\\
			\| T_{\Ld^r v}u\|_{L^2}&\les \| u\|_{B^{1}_{\infty, 2}}\| \Ld^r v\|_{ H^{-1}}\les \| u\|_{B^{1}_{\infty, 2}}\| v\|_{ H^{r-1}}\quad\eqref{pp:new1},\\
			\| R(\Ld^rv, u)\|_{L^2}&\les \|  u\|_{B^{1+\delta}_{\infty, \infty}}\| \Ld^rv\|_{H^{-1}}\les  \| u\|_{B^{1+\delta}_{\infty, \infty}}\| v\|_{H^{r-1}}\quad\eqref{BonyR}.
		\end{align*}
		Since $B^{1+\delta}_{\infty, \infty}\subset B^{1}_{\infty, 2}$, the proof is complete.
	\end{proof}
	\subsection{Control of $N_1$}
	By symmetrizing and integration by parts, we have
	\[
	\begin{aligned}
		N_1=\int_\T \Ld_\g^s\p_\g^2x(\g) \cN_1(\g)d\g&=\mez\int_\T\int_\T \Ld_\g^{2s}\p_\g^2x_- \p_\g^3 x_-gd\eta d\g\\
		&=\mez\int_{\T}\int_\T  \Ld^s_\g \p_\g^2 x_-\Ld^s_\g \p_\g^3 x_-  g  d\g d\eta+\mez\int_{\T}\int_\T \Ld^s_\g \p_\g^2x_-[\Ld^s_\g, g]\p_\g^3 x_- d\g d\eta\\
		&=-\frac{1}{4}\int_{\T}\int_\T |\Ld^s_\g \p_\g^2 x_-|^2\p_\g gd\g d\eta +\mez\int_{\T}\int_\T \Ld^s_\g \p_\g^2x_-[\Ld^s_\g, g]\p_\g^3 x_- d\g d\eta\\
		&=N_{11}+N_{12}.
	\end{aligned}
	\]
	By virtue of the Lipschitz bound \eqref{bound:gL} for $g$, we have
	\begin{align*}
		|N_{11}|&\les \| F(x)\|_{L^\infty}^3\|\p_\g x\|_{C^{\mez+s}}^2\int_{\T}\int_\T |\Ld^s_\g \p_\g^2 x_-|^2d\g \frac{d\eta}{|\eta|^{1-2s}}\\
		&\les \| F(x)\|_{L^\infty}^3\|\p_\g x\|_{C^{\mez+s}}^2\| \Ld^s\p_\g^2x\|_{L^2}^2.
	\end{align*}
	Regarding $N_{12}$ we appeal to the commutator estimate \eqref{cmt:1} and notice that  $[\Ld^s, g]=[\Ld^s, g-\wh{g}(0)]$, where $g=g(\cdot, \eta)$. Then for any $\delta>0$,
	\[
	\| [\Ld^s_\g, g]\p_\g^3 x_- \|_{L^2}\les \|  g-\wh{g}(0)\|_{B^{1+\delta}_{\infty, \infty}}\| \p_\g^2 x_-\|_{H^s}\les  \|  \p_\g g\|_{\dot B^{\delta}_{\infty, \infty}}\| \p_\g^2 x_-\|_{\dot H^s}\les   \|  \p_\g g\|_{\dot C^\delta}\| \p_\g^2 x_-\|_{\dot H^s},
	\]
	where we have used Proposition \ref{equi:HZ} in the last inequality. Now invoking the (homogeneous) H\"older estimate \eqref{bound:gC} for $\p_\g g$ with  $\eps=0$ yields
	\begin{align*}
		|N_{12}|&\les \Big(\| F(x)\|_{L^\infty}^3\| \p_\g x\|_{C^{\mez+s}}^2+\| F(x)\|_{L^\infty}^5\| \p_\g x\|_{C^{\mez+s}}^4\Big)\int_{\T}\int_\T |\Ld^s_\g \p_\g^2x_-| \|\p_\g^3 x_-\|_{\dot H^{s-1}_\g} d\g  \frac{d\eta}{|\eta|^{1-s}} \\
		&\les   \Big(\| F(x)\|_{L^\infty}^3\| \p_\g x\|_{C^{\mez+s}}^2+\| F(x)\|_{L^\infty}^5\| \p_\g x\|_{C^{\mez+s}}^4\Big)\| \Ld^s\p_\g^2x\|^2_{L^2}.
	\end{align*}
	In view of this and the above estimate for $N_{11}$ we have proved
	\bq\label{est:N1}
	|N_1|\les   \Big(\| F(x)\|_{L^\infty}^3\| \p_\g x\|_{C^{\mez+s}}^2+\| F(x)\|_{L^\infty}^5\| \p_\g x\|_{C^{\mez+s}}^4\Big)\| \Ld^s\p_\g^2x\|^2_{L^2}.
	\eq
	\subsection{Control of $N_2$}
	\begin{align*}
		N_2=\int_\T \Ld^{2s}_\g\p_\g^2x(\g)\cdot \cN_2(\g)d\g &
		=2\int_\T \Ld^{s}_\g\p_\g^2x(\g) \cdot\int_\T  \Ld^s(\p_\g^2x_-\p_\g g)d\eta \\
		&\le 2\| \Ld^{s}_\g\p_\g^2x\|_{L^2}\int_\T\| \Ld^s(\p_\g^2x_-\p_\g g)\|_{L^2_\g}d\eta.
	\end{align*}
	%Applying the productest:1's inequality proved in Theorem 1, \cite{GraOh} we get
	%\bq\label{productest:1}
	%\begin{aligned}
	%\|\Ld^s(u v)\|_{L^2}&\les \| \Ld^su\|_{L^2}\|v\|_{L^\infty}+ \| u\|_{L^2}\|\Ld^sv\|_{L^\infty}\\
	% &\les \| \Ld^su\|_{L^2}\|v\|_{L^\infty}+\| u\|_{L^2}\|v\|_{B^{s+\eps}_{\infty, \infty}}\\
	% &\les \| \Ld^su\|_{L^2}\|v\|_{L^\infty}+\| u\|_{L^2}\|v\|_{C^{s+\eps}}
	% \end{aligned}
	%\eq
	Combining the paraproduct rules \eqref{BonyR}, \eqref{pp:1} and \eqref{pp:new1}, we get
	\bq\label{productest:1}
	\begin{aligned}
		\|\Ld^s(u v)\|_{L^2}\le \|u v\|_{H^s}&\les \|v\|_{L^\infty}\|u\|_{H^s}+ \| u\|_{L^2}\|v\|_{B^{s+\delta/2}_{\infty, 2}}\\
		% &\les  \|v\|_{L^\infty}\|u\|_{H^s}+\| u\|_{L^2}\|v\|_{B^{s+\eps}_{\infty, \infty}}\\
		&\les \ \|v\|_{L^\infty}\|u\|_{H^s}+\| u\|_{L^2}\|v\|_{C^{s+\delta}}
	\end{aligned}
	\eq
	for any $\delta\in (0, \mez)$. Note that we have used Proposition \ref{equi:HZ} to have $\|v\|_{B^{s+\delta}_{\infty, \infty}}\simeq \|v\|_{C^{s+\delta}}$. We apply \eqref{productest:1} with $u=\p_\g^2x_-(\cdot, \eta)$ and $v=\p_\g g(\cdot, \eta)$. Using the $L^\infty$ estimate \eqref{bound:gL} and the H\"older estimate \eqref{bound:gC}, we deduce 
	\begin{align*}
		\|\Ld^s(\p_\g^2x_-\p_\g g)\|_{L^2_\g}&\les \| \Ld^s\p_\g^2x\|_{L^2}\| F(x)\|_{L^\infty}^3\|\p_\g x\|_{C^{\mez+s}}^2\frac{1}{|\eta|^{1-2s}}\\
		&\quad+ \| \p_\g^2x\|_{L^2}\Big(\| F(x)\|_{L^\infty}^3\| \p_\g x\|_{C^{\mez+s}}^2+\| F(x)\|_{L^\infty}^5\| \p_\g x\|_{C^{\mez+s}}^4\Big)\frac{1}{|\eta|^{1-s}}.
	\end{align*}
	Therefore, 
	\bq\label{est:N2}
	|N_2|\les \Big(\| F(x)\|_{L^\infty}^3\| \p_\g x\|_{C^{\mez+s}}^2+\| F(x)\|_{L^\infty}^5\| \p_\g x\|_{C^{\mez+s}}^4\Big) \| \Ld^s\p_\g^2x\|_{L^2}^2.
	\eq
	% \begin{align*}
	%  \int_\T \Ld^{2s}_\g\p_\g^2x(\g)\cdot \cN_2(\g)d\g &=2\int_\T \int_\T  \Ld^{2s}_\g\p_\g^2x(\g)\cdot\p_\g^2x_-\p_\g gd\eta\\
	% \begin{align*}
	%  \int_\T \Ld^{2s}_\g\p_\g^2x(\g)\cdot \cN_2(\g)d\g &=2\int_\T \int_\T  \Ld^{2s}_\g\p_\g^2x(\g)\cdot\p_\g^2x_-\p_\g gd\eta\\
	%  &=2\int_\T \int_\T  \Ld^s_\g\p_\g^2x(\g)\cdot\Ld^s_\g(\p_\g^2x_-)\p_\g gd\eta+2\int_\T \int_\T  \Ld^s_\g\p_\g^2x(\g)\cdot[\Ld^s_\g, \p_\g g]\p_\g^2x_-d\eta\\
	%  &=N_{21}+N_{22}.
	%  \end{align*}
	%It follows directly from \eqref{bound:gL} that
	%\[
	%|N_{21}|\les \| \Ld^s_\g \p_\g^2 x\|^2_{L^2}\| F(x)\|_{L^\infty}^{\alpha+2}\| \p_\g x\|_{C^{\mez+s}}^2.
	%\]
	%  As for $N_{21}$ we apply the commutator estimate \eqref{cmt:2} and the H\"older bound \eqref{bound:gC} for $\p_\g g$ to have
	%  \begin{align*}
	%  |N_{22}|&\le 2\int_\T \int_\T  |\Ld^s_\g\p_\g^2x(\g)\| \p_\g g\|_{\dot C^s_\g}\|\p_\g^2x_-\|_{L^2_\g}d\g d\eta\\
	%  &\les\| F(x)\|_{L^\infty}^{\alpha+4}\| \p_\g x\|_{C^{\mez+s}}^2\|\p_\g^2x\|_{L^2}\int_\T \int_\T  |\Ld^s_\g\p_\g^2x(\g)d\g\frac{1}{|\eta|^{\alpha-s}}d\eta\\
	%  &\les \|\Ld^s\p_\g^2x\|_{L^2}\|\p_\g^2x\|_{L^2}\| F(x)\|_{L^\infty}^{\alpha+4}\| \p_\g x\|_{C^{\mez+s}}^2.
	%  \end{align*}
	\subsection{Control of $N_3$}
	Clearly,
	\[
	|N_3|\le \| \Ld^s\p_\g^2x\|_{L^2}\int_\T \| \Ld^s(\p_\g  x_-\p_\g^2g)\|_{L^2_\g}d\eta.
	\]
	We compute 
	\[
	\p_\g  x_-\p_\g^2g=3 \p_\g x_-\frac{(x_-\cdot\p_\g x_-)^2}{|x_-|^5}-\alpha  \p_\g x_-\frac{|\p_\g x_-|^2+x_-\cdot \p_\g^2x_-}{|x_-|^3}.
	\]
	We shall only consider the term $n=\p_\g x_-\frac{x_-\cdot \p_\g^2x_-}{|x_-|^3}$ because the other terms can be treated similarly. Using $\p_\g x\cdot \p_\g^2x= 0$ we write 
	\[
	n=\frac{\p_\g x_-}{|x_-|^3}x_-\cdot \p_\g^2x_-=\frac{\p_\g x_-}{|x_-|^3}(x_--\eta\p_\g x)\cdot \p_\g^2x_--\frac{\p_\g x_-}{|x_-|^3}\eta \p_\g x_-\cdot\p_\g^2 x(\g-\eta):=n_1+n_2.
	\]
	By virtue of inequality \eqref{productest:1} we have
	\[
	\| \Ld^s n_1\|_{L^2_\g}\les  \|\Ld^s\p_\g^2x_-\|_{L^2_\g} \left\| \frac{\p_\g x_-}{|x_-|^3}(x_--\eta\p_\g x)\right\|_{L^\infty_\g}+  \|\p_\g^2x_-\|_{L^2_\g}\left\| \frac{\p_\g x_-}{|x_-|^3}(x_--\eta\p_\g x)\right\|_{C^{s+\eps}}
	\]
	for all $\eps\in (0, \mez)$.  By the mean value theorem,
	\begin{align*}
		\left|\frac{\p_\g x_-}{|x_-|^3}(x_--\eta\p_\g x)\right|&\les \frac{|\eta|^{\mez+s}\|  \p_\g x\|_{C^{\mez+s}} }{|\eta|^3}\| F(x)\|^3_{L^\infty}|\eta|^{\tdm+s}\|  \p_\g x\|_{C^{\mez+s}}\\
		&\les \frac{1}{|\eta|^{1-2s}}\| F(x)\|^3_{L^\infty}\|  \p_\g x\|_{C^{\mez+s}}^2.
	\end{align*}
	On the other hand, as for the term $m_1$ in \eqref{dg} we have (see \eqref{boundg:H:4} and \eqref{boundg:H:3}) 
	\begin{align*}
		\left\|\frac{\p_\g x_-}{|x_-|^3}(x_--\eta\p_\g x)\right\|_{C^{s+\eps}_\g}&\les \frac{1}{|\eta|^{1-s+\eps}}\| F(x)\|_{L^\infty}^{3}\|\p_\g x\|^2_{C^{\mez+s}}\\
		&\quad+\frac{1}{|\eta|^{1-2s}}\| F(x)\|_{L^\infty}^{5}\| \p_\g x\|_{C^{\mez+s}}^2\| \p_\g x\|^2_{C^{\mez}}\quad\forall \eps\in [0, \mez).
	\end{align*}
	Thus choosing $\eps=\frac{s}{2}$ we arrive at
	\begin{align*}
		\| \Ld^sn_1\|_{L^2_\g}&\les \frac{1}{|\eta|^{1-\frac{s}{2}}}\| \Ld^s\p_\g^2x\|_{L^2}\| F(x)\|_{L^\infty}^3\|\p_\g x\|_{C^{\mez+s}}^2\\
		&\quad+ \| \p_\g^2x\|_{L^2}\Big(\frac{1}{|\eta|^{1-\frac{s}{2}}}\| F(x)\|_{L^\infty}^3\| \p_\g x\|_{C^{\mez+s}}^2+\frac{1}{|\eta|^{1-2s}}\| F(x)\|_{L^\infty}^5\| \p_\g x\|_{C^{\mez+s}}^4\Big).
	\end{align*}
	By a similar argument it can be shown that $n_2$ obeys the same bound, and so does  $\p_\g  x_-\p_\g^2g$. We conclude that 
	\bq\label{est:N3}
	|N_3|\les \Big(\| F(x)\|_{L^\infty}^3\| \p_\g x\|_{C^{\mez+s}}^2+\| F(x)\|_{L^\infty}^5\| \p_\g x\|_{C^{\mez+s}}^4\Big) \| \Ld^s\p_\g^2x\|_{L^2}^2.
	\eq
	Putting together \eqref{est:N1}, \eqref{est:N2} and \eqref{est:N3}, we obtain
	\begin{prop}\label{prop:Nj}
	For all $s\in (0, \mez)$, there exists a polynomial $\cP(\cdot, \cdot)$ such that 
	\bq\label{est:Nj}
	\sum_{j=1}^3|N_j|\le \cP(\| F(x)\|_{L^\infty}, \| \p_\g x\|_{C^{\mez+s}})\| \p_\g^2x\|_{H^s}^2.
	\eq
	\end{prop}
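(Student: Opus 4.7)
The plan is to assemble the bound from the three per-term estimates \eqref{est:N1}, \eqref{est:N2}, \eqref{est:N3} established in the previous subsections. Since each has the common form
\bq
|N_j|\les \Big(\| F(x)\|_{L^\infty}^3\| \p_\g x\|_{C^{\mez+s}}^2+\| F(x)\|_{L^\infty}^5\| \p_\g x\|_{C^{\mez+s}}^4\Big)\| \Ld^s\p_\g^2x\|^2_{L^2}
\eq
and since $\| \Ld^s\p_\g^2x\|_{L^2}\le \| \p_\g^2x\|_{H^s}$, adding the three inequalities and reading off the polynomial $\cP$ completes the proof.

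At the structural level, the three terms $N_1$, $N_2$, $N_3$ differ in how the derivatives in \eqref{eq:d2} are distributed between $x_-$ and $g$, and each requires its own symmetry/commutator device. For $N_1$ (three derivatives on $x_-$), I would symmetrize via the change of variable $\g\mapsto\g-\eta$ and integrate by parts in $\g$ to move one derivative onto $g$, producing a principal term governed by the Lipschitz bound \eqref{bound:gL} plus a Kato--Ponce type commutator $[\Ld^s,g]\p_\g^3 x_-$ handled by Lemma \ref{lemm:cmt} together with the H\"older bound \eqref{bound:gC}. For $N_2$ (two derivatives on $x_-$, one on $g$), Cauchy--Schwarz in $\g$ reduces matters to $\|\Ld^s(\p_\g^2 x_-\p_\g g)\|_{L^2_\g}$, which is controlled via the product rule \eqref{productest:1} fed by \eqref{bound:gL} and \eqref{bound:gC}. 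For $N_3$ (two derivatives on $g$, one on $x_-$), the explicit expansion of $\p_\g^2 g$ must use the reparametrization identity $\p_\g x\cdot \p_\g^2 x=0$ from \eqref{At}, combined with the Taylor bound $x_--\eta\p_\g x=O(|\eta|^{\tdm+s})$, to expose enough $\eta$-decay before invoking \eqref{productest:1} once more.

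The main obstacle shared by all three terms is the logarithmic $1/|\eta|$ singularity of the kernel in \eqref{SQGp}. The decisive observation is that the full $H^{2+s}$ regularity upgrades $\p_\g x$ to $C^{\mez+s}$, which provides an extra factor $|\eta|^{\eps}$ in the H\"older bound \eqref{bound:gC} for $\p_\g g$; this is precisely what renders the $\eta$-integrals convergent after the H\"older and product estimates are applied. Once those ingredients are in hand, no further work is needed: the desired polynomial bound follows by taking $\cP$ as the sum of the three polynomials on the right-hand sides of \eqref{est:N1}, \eqref{est:N2}, \eqref{est:N3}.
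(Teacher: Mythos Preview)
Your proposal is correct and matches the paper's approach exactly: the proposition is stated immediately after \eqref{est:N1}, \eqref{est:N2}, \eqref{est:N3} with the one-line justification ``Putting together \eqref{est:N1}, \eqref{est:N2} and \eqref{est:N3}, we obtain\ldots'', and your structural summary of how each $N_j$ is handled (symmetrization plus commutator for $N_1$, product estimate \eqref{productest:1} for $N_2$, explicit expansion of $\p_\g^2 g$ using $\p_\g x\cdot\p_\g^2 x=0$ for $N_3$) is an accurate account of the paper's arguments in the preceding subsections.
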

	%%%%%%%%%%%%%%
	\section{ A  priori estimates for tangential nonlinearity}\label{sec:tangentialestimates}
	In this section, we derive a priori estimates for the tangential terms $T_j$ in \eqref{evol:Hs} assuming that  $x\in C([0, T]; H^3)$ is a solution of \eqref{SQGp}-\eqref{ld}. This requires good bounds for $\ld$. 
	\subsection{Estimates for $\lambda$}\label{subsec:ld}
	%In this subsection, we first prove bounds on the quantity $\|\dg \lambda\|_{C^{\epsilon}}$ for some $\epsilon>0$ and after for the quantity $\| \dg\lambda\|_{H^{\frac12}}$.
	We first recall that 
	\[
	\begin{aligned}
		\ld(\g, t)&=\frac{\pi +\g}{2\pi}\int_\T \int_\T \p_\g\Big(\frac{\p_\g x(\g)-\p_\g x(\g -\eta)}{|x(\g)-x(\g-\eta)|}\Big)\cdot \frac{\p_\g x(\g)}{|\p_\g x(\g)|^2}d\eta d\g\\
		&\quad-\int_{-\pi}^\g \int_\T \p_\g\Big(\frac{\p_\g x(\xi)-\p_\g x(\xi-\eta)}{|x(\xi)-x(\xi-\eta)|}\Big) \cdot \frac{\p_\g x(\xi)}{|\p_\g x(\xi)|^2}d\eta d\xi.
	\end{aligned}
	\]
	Taking derivative with respect to $\g$ and recalling that $A(t)=|\p_\g x(\g)|^2$ depends only on $t$, we obtain
	\bq
\begin{aligned}
		\p_\g\ld(\g, t)&=\frac{1}{2\pi}\int_\T \int_\T \p_\g\Big(\frac{\p_\g x(\g)-\p_\g x(\g -\eta)}{|x(\g)-x(\g-\eta)|}\Big)\cdot \frac{\p_\g x(\g)}{|\p_\g x(\g)|^2}d\eta d\g\\
		&\quad+\frac{1}{A(t)}\int_\T \frac{x_-\cdot \p_\g x_-(\g)}{|x_-(\g)|^3}\p_\g x_-(\g)\cdot \p_\g x(\g)d\eta	 -\frac{1}{A(t)}\int_\T \frac{\p_\g^2 x_-(\g)}{|x_-(\g)|}\cdot \p_\g x(\g)d\eta,
	\end{aligned}
	\eq
	where $x_-(\g)=x(\g)-x(\g-\eta)$.  Using the identities $ \dg x(\gamma) \cdot \dg^{2}x_{-}(\gamma)=-\dg x_{-}(\gamma) \cdot \dg^{2}x(\gamma-\eta)$ and $\dg x(\gamma) \cdot \dg x_{-}(\gamma) = \frac{1}{2}|\dg x_{-}(\gamma)|^{2}$, we rewrite 
\begin{equation}\label{dgammalambda}
	\partial_{\gamma}\lambda(\gamma) = \Gamma_{1}(\gamma) +  \Gamma_{2}(\gamma) +  \Gamma_{3}(\gamma)
\end{equation}
where
\begin{align}\label{dglambda1}
	& \Gamma_{1}(\gamma) =  \frac{1}{A(t)} \int_{\mathbb{T}} \frac{\partial_{\gamma}x_{-}(\gamma)}{|x_{-}(\gamma)|}\cdot\dg^{2}x(\g-\eta) d\eta,
\end{align}
\begin{align}\label{dglambda2}
	& \Gamma_{2}(\gamma) =   \frac{1}{2A(t)}\int_{\mathbb{T}} \frac{|\partial_{\gamma}x_{-}(\gamma)|^{2} x_{-}(\gamma)\cdot\partial_{\gamma}x_{-}(\gamma)}{|x_{-}(\gamma)|^{3}} d\eta,
\end{align}
\begin{align}\label{dglambda3}
	& \Gamma_{3} =  \frac{-1}{2\pi A(t)}\int_{\mathbb{T}}\int_{\mathbb{T}} \frac{\partial_{\gamma}x_{-}(\gamma)}{|x_{-}(\gamma)|}\cdot\dg^{2}x(\g) d\eta d\gamma.
\end{align}
Note that $ \Gamma_{3}$ is a constant in $\g$. 
	\subsubsection{H\"older estimate for $\p_\g \ld$}
	We start with an  $L^\infty$ bound for $\p_\g \ld$.
	\begin{lemm}
	For all $\mu\in (0, \mez)$, there exists $C>0$ such that
	\bq\label{dld:Linfty}
	\|  \ld\|_{W^{1, \infty}}\le C\big(\| F(x)\|_{L^\infty}^3+\| F(x)\|_{L^\infty}^4\| \p_\g x\|_{L^\infty}\big)\| \p_\g x\|_{C^{\mez+\mu}}\| \p_\g^2x\|_{L^2}.
	\eq
	\end{lemm}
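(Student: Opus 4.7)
The plan is to prove \eqref{dld:Linfty} by first estimating $\|\partial_\gamma \lambda\|_{L^\infty}$ via the decomposition \eqref{dgammalambda}, then recovering $\|\lambda\|_{L^\infty}$ from it. As a preliminary, sending $\eta \to 0$ in the definition \eqref{arcchordquantity} of $F(x)$ shows $|\partial_\gamma x(\gamma)| \ge \|F(x)\|_{L^\infty}^{-1}$, hence $1/A(t) \le \|F(x)\|_{L^\infty}^2$. Throughout, I will use the three basic pointwise bounds $|x_-| \ge |\eta|/\|F(x)\|_{L^\infty}$, $|\partial_\gamma x_-| \le |\eta|^{\mez+\mu}\|\partial_\gamma x\|_{C^{\mez+\mu}}$ (Taylor), and $|\partial_\gamma x_-| \le 2\|\partial_\gamma x\|_{L^\infty}$, together with a fourth: writing $\partial_\gamma x_-(\gamma,\eta) = \int_{\gamma-\eta}^\gamma \partial_\gamma^2 x(u)\,du$ and applying Cauchy-Schwarz in $u$ gives $|\partial_\gamma x_-| \le |\eta|^{\mez}\|\partial_\gamma^2 x\|_{L^2}$.

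For $\Gamma_1$, I apply Cauchy-Schwarz in $\eta$. The $L^2_\eta$ norm of $\partial_\gamma x_-/|x_-|$ is bounded by $\|F(x)\|_{L^\infty}\|\partial_\gamma x\|_{C^{\mez+\mu}}\bigl(\int_{\T}|\eta|^{-1+2\mu}d\eta\bigr)^{1/2}$, which is finite since $\mu>0$. Pairing this with $\|\partial_\gamma^2 x(\gamma-\cdot)\|_{L^2_\eta} = \|\partial_\gamma^2 x\|_{L^2}$ and inserting the factor $1/A(t)\le \|F(x)\|_{L^\infty}^2$ yields $|\Gamma_1(\gamma)|\le C\|F(x)\|_{L^\infty}^3\|\partial_\gamma x\|_{C^{\mez+\mu}}\|\partial_\gamma^2 x\|_{L^2}$. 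For $\Gamma_3$, which is constant in $\gamma$, I use Cauchy-Schwarz in $\gamma$ combined with Minkowski's inequality to move $d\eta$ outside the $L^2_\gamma$ norm, giving the same bound.

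The main obstacle is $\Gamma_2$, where $|\partial_\gamma x_-|^3/|x_-|^2$ produces a $|\eta|^{-2}$ singularity after using $|x_-|\gtrsim |\eta|$. Using Taylor three times would cost $\|\partial_\gamma x\|_{C^{\mez+\mu}}^3$ and fail to produce $\|\partial_\gamma^2 x\|_{L^2}$; using $L^\infty$ three times would be non-integrable. The right three-way split is: one factor by $2\|\partial_\gamma x\|_{L^\infty}$, one by $|\eta|^{\mez+\mu}\|\partial_\gamma x\|_{C^{\mez+\mu}}$, and the third by the Cauchy-Schwarz estimate $|\partial_\gamma x_-|\le |\eta|^{\mez}\|\partial_\gamma^2 x\|_{L^2}$. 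Then
\[
\int_{\T}\frac{|\partial_\gamma x_-|^3}{|x_-|^2}d\eta \le C\|F(x)\|_{L^\infty}^2\|\partial_\gamma x\|_{L^\infty}\|\partial_\gamma x\|_{C^{\mez+\mu}}\|\partial_\gamma^2 x\|_{L^2}\int_{\T}|\eta|^{-1+\mu}d\eta,
\]
which is integrable for any $\mu>0$. Combined with $1/A(t)\le\|F(x)\|_{L^\infty}^2$, this gives $|\Gamma_2(\gamma)|\le C\|F(x)\|_{L^\infty}^4\|\partial_\gamma x\|_{L^\infty}\|\partial_\gamma x\|_{C^{\mez+\mu}}\|\partial_\gamma^2 x\|_{L^2}$.

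To close the argument, I observe from \eqref{ld} that $\lambda(-\pi)=0$: the first summand carries the prefactor $(\pi+\gamma)/(2\pi)$ which vanishes at $\gamma=-\pi$, while the second is an integral $\int_{-\pi}^{-\pi}$. Hence $\lambda(\gamma)=\int_{-\pi}^\gamma \partial_\gamma\lambda(\xi)\,d\xi$ and $\|\lambda\|_{L^\infty}\le 2\pi\|\partial_\gamma\lambda\|_{L^\infty}$, which combined with the $\Gamma_j$ bounds yields \eqref{dld:Linfty}.
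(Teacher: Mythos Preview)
Your proof is correct and follows essentially the same approach as the paper: the same decomposition \eqref{dgammalambda}, the same Poincar\'e argument from $\lambda(-\pi)=0$, and the same term-by-term bounds. The one minor difference is in $\Gamma_2$: you use the direct pointwise bound $|\partial_\gamma x_-|\le |\eta|^{1/2}\|\partial_\gamma^2 x\|_{L^2}$, whereas the paper writes $\partial_\gamma x_-=\eta\int_0^1\partial_\gamma^2 x(\gamma-r\eta)\,dr$ and applies Cauchy--Schwarz in $\eta$ followed by $\int_0^1 r^{-1/2}\,dr<\infty$; your version is slightly cleaner and yields the identical final estimate.
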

	\begin{proof}
	It suffices to prove that $\| \p_\g\ld \|_{L^\infty}$ is controlled by the right-hand side of \eqref{dld:Linfty}. Indeed, since $\ld(-\pi)=\ld(\pi)=0$, the Poincare inequality $\| \ld\|_{L^\infty}\le 2\pi\| \p_\g \ld\|_{L^\infty}$ holds. 
	
	From the definition of $F$ we have 
\[
\frac{1}{A(t)}\le \| F(x(t))\|_{L^\infty}^2.
\]
Hence, $\Gamma_1$ is bounded as
\begin{align*}
|\Gamma_1|&\le \| F(x)\|_{L^\infty}^3\| \p_\g x\|_{C^{\mez+\mu}}\int_\T \frac{|\p_\g^2x(\g-\eta)|}{|\eta|^{\mez-\mu}}d\eta\les \| F(x)\|_{L^\infty}^3\| \p_\g x\|_{C^{\mez+\mu}}\| \p_\g^2x\|_{L^2}.
\end{align*}
On the other hand,   
\begin{align*}
|\Gamma_2|&\le  \| F(x)\|_{L^\infty}^2\| \p_\g x\|_{L^\infty}\int_{\mathbb{T}}  \| \p_\g x\|_{C^{\mez+\mu}}|\eta|^{\mez+\mu}\frac{\| F(x)\|_{L^\infty}^2}{|\eta|^2} |\eta|\int_0^1|\p_\g^2 x(\g-r\eta)|drd\eta\\
&\les  \| F(x)\|_{L^\infty}^4 \| \p_\g x\|_{L^\infty} \| \p_\g x\|_{C^{\mez+\mu}}\int_0^1\frac{dr}{r^{\mez}} \Big(\int_{\mathbb{T}}\frac{1}{|\eta|^{1-2\mu}}d\eta\Big)^{\mez}\| \p_\g^2 x\|_{L^2}
\\&\les \| F(x)\|_{L^\infty}^4 \| \p_\g x\|_{L^\infty} \| \p_\g x\|_{C^{\mez+\mu}}\| \p_\g^2 x\|_{L^2},
\end{align*}
and
\begin{align*}
|\Gamma_3|&\le\| F(x)\|_{L^\infty}^2\int_{\mathbb{T}}\int_{\mathbb{T}} \| F(x)\|_{L^\infty}\| \p_\g x\|_{C^\mu}\frac{d\eta}{|\eta|^{1-\mu}} |\dg^{2}x(\g)|d\gamma\\
&\les \| F(x)\|_{L^\infty}^3\| \p_\g x\|_{C^\mu} \| \p^2_\g x\|_{L^2}.
 \end{align*}
	\end{proof}
	Next, we prove the following H\"older estimate for $\p_\g\ld$.
	\begin{prop}\label{prop:ld:Holder}
	For any $\mu \in (0, \mez)$, if $a\in (0, 1)$ satisfies 
\bq
a(\mez+\mu)<\mu
\eq
 then there exists a polynomial $\cP(\cdot, \cdot)$ independent of $x$ such that
 %$\epsilon = (\frac{1}{2}+\mu)a$ for small enough $a > 0$ that $\dg \lambda \in C^{\epsilon}$ with the bound
%\begin{equation}\label{lambdaCepsilon}
%	\|\dg \lambda\|_{C^{\epsilon}} \lesssim C(\|\dg x\|_{C^{\frac{1}{2}+\mu}}, \|\dg^{2}x\|_{H^{\mu}}).
%\end{equation} 
\begin{equation}\label{lambdaCepsilon}
	\|\dg \lambda\|_{C^{a(\mez+\mu)}} \le \cP(\| F(x)\|_{L^\infty},\|\dg x\|_{C^{\frac{1}{2}+\mu}}) \|\dg^{2}x\|_{H^{\mu}}.
\end{equation} 
\end{prop}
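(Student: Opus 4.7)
The decomposition \eqref{dgammalambda}--\eqref{dglambda3} reduces matters to controlling $\Gamma_1$ and $\Gamma_2$, since $\Gamma_3$ is constant in $\gamma$ and so contributes nothing to any Hölder seminorm. Term $\Gamma_2$ is the easier one: its integrand depends only on $\partial_\gamma x \in C^{1/2+\mu}$ and on $x_-$, so interpolating the two pointwise bounds
\[
|\partial_\gamma x_-(\gamma+h, \eta) - \partial_\gamma x_-(\gamma, \eta)| \le 2\min\{|h|^{1/2+\mu}, |\eta|^{1/2+\mu}\} \|\partial_\gamma x\|_{C^{1/2+\mu}}
\]
and
\[
|x_-(\gamma+h, \eta) - x_-(\gamma, \eta)| \le \min\{|h||\eta|^{1/2+\mu}, |h|^{1/2+\mu}|\eta|\}\|\partial_\gamma x\|_{C^{1/2+\mu}}
\]
gives an integrand-level Hölder difference of order $|h|^{1/2+\mu}$ against an $\eta$-integrable weight (the Lipschitz-like bound on $1/|x_-|^3$ is handled by the arc-chord quantity $\|F(x)\|_{L^\infty}$). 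This yields $[\Gamma_2]_{C^{1/2+\mu}} \le \cP(\|F(x)\|_{L^\infty}, \|\partial_\gamma x\|_{C^{1/2+\mu}})$, which is more than enough since $a(1/2+\mu) < 1/2+\mu$.

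The main work is $\Gamma_1$, in which $\partial_\gamma^2 x$ is only $H^\mu$ and no pointwise bound on it is available. Set $K(\gamma, \eta) := \partial_\gamma x_-(\gamma, \eta)/|x_-(\gamma, \eta)|$ so that $\Gamma_1(\gamma) = A(t)^{-1}\int_\T K(\gamma, \eta) \cdot \partial_\gamma^2 x(\gamma - \eta)\, d\eta$, and for $|h| \le 1$ decompose
\[
\Gamma_1(\gamma+h) - \Gamma_1(\gamma) = \underbrace{\tfrac{1}{A}\int_\T [K(\gamma+h, \eta) - K(\gamma, \eta)] \partial_\gamma^2 x(\gamma+h-\eta)\, d\eta}_{=: I} + \underbrace{\tfrac{1}{A}\int_\T K(\gamma, \eta) [\partial_\gamma^2 x(\gamma+h-\eta) - \partial_\gamma^2 x(\gamma-\eta)]\, d\eta}_{=: II}.
\]
Apply Cauchy--Schwarz in $\eta$ to $II$: the pointwise bound $|K(\gamma, \eta)| \lesssim \|F(x)\|_{L^\infty}\|\partial_\gamma x\|_{C^{1/2+\mu}} |\eta|^{\mu-1/2}$ gives $K(\gamma, \cdot) \in L^2_\eta$ (because $\mu>0$), while the standard $H^\mu(\T)$ characterisation $\|f(\cdot + h) - f\|_{L^2} \lesssim |h|^\mu \|f\|_{H^\mu}$ handles the shift in $\partial_\gamma^2 x$, producing $|II| \lesssim |h|^\mu \cdot \cP \cdot \|\partial_\gamma^2 x\|_{H^\mu}$, which is bounded by $|h|^{a(1/2+\mu)} \cdot \cP \cdot \|\partial_\gamma^2 x\|_{H^\mu}$ since $a(1/2+\mu)<\mu$.

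For $I$, Cauchy--Schwarz in $\eta$ again reduces matters to bounding $\|K(\gamma+h, \cdot) - K(\gamma, \cdot)\|_{L^2_\eta}$. Splitting
\[
K(\gamma+h, \eta) - K(\gamma, \eta) = \frac{\partial_\gamma x_-(\gamma+h, \eta) - \partial_\gamma x_-(\gamma, \eta)}{|x_-(\gamma+h, \eta)|} + \partial_\gamma x_-(\gamma, \eta)\Big[\tfrac{1}{|x_-(\gamma+h, \eta)|} - \tfrac{1}{|x_-(\gamma, \eta)|}\Big],
\]
the numerator piece is estimated by geometric-mean interpolation of the two $\min$-bounds above, which yields $|\partial_\gamma x_-(\gamma+h) - \partial_\gamma x_-(\gamma)| \lesssim |h|^{a(1/2+\mu)}|\eta|^{(1-a)(1/2+\mu)}\|\partial_\gamma x\|_{C^{1/2+\mu}}$; dividing by $|x_-| \gtrsim |\eta|/\|F(x)\|_{L^\infty}$ gives a pointwise bound of size $|h|^{a(1/2+\mu)}|\eta|^{(1-a)(1/2+\mu)-1}$. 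Its $L^2_\eta$ norm is finite precisely when $2(1-a)(1/2+\mu)-2 > -1$, i.e. $(1-a)(1/2+\mu) > 1/2$, equivalently $a(1/2+\mu) < \mu$---exactly the standing hypothesis, which is thus sharp for this route. The denominator-difference piece is treated analogously, using the interpolated bound on $|x_-(\gamma+h)-x_-(\gamma)|$, and produces a strictly larger $|h|$-exponent and a milder $|\eta|$-integrability requirement, so it is dominated by the first piece. Combining, $|I|+|II| \lesssim |h|^{a(1/2+\mu)}\,\cP(\|F(x)\|_{L^\infty}, \|\partial_\gamma x\|_{C^{1/2+\mu}}) \|\partial_\gamma^2 x\|_{H^\mu}$, yielding \eqref{lambdaCepsilon}. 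The main obstacle is calibrating this interpolation on the kernel difference: one must convert the two pointwise $\min$-bounds into a single interpolated bound whose $|\eta|$-exponent, after division by $|x_-|$, keeps the integral convergent, and it is exactly the inequality $a(1/2+\mu)<\mu$ that makes the marginal $L^2_\eta$-integrability go through.
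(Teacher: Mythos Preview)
Your proof is correct and follows essentially the same route as the paper: the same decomposition $\dg\lambda=\Gamma_1+\Gamma_2+\Gamma_3$, the same identification of $\Gamma_{13}$ (your numerator piece in $I$) as the term that forces the hypothesis $a(\tfrac12+\mu)<\mu$, and the same pointwise-$C^{\frac12+\mu}$ treatment of $\Gamma_2$. The one notable technical difference is in the $\dg^2 x$-difference piece (your $II$, the paper's $\Gamma_{11}$): the paper obtains the $|h|^\mu$ gain by viewing $\Gamma_{11}$ as a linear operator in $\dg^2 x$ and interpolating between an $L^2$ bound and an $|h|\|\dg^3 x\|_{L^2}$ bound, whereas you apply Cauchy--Schwarz in $\eta$ directly and invoke the shift characterization $\|f(\cdot+h)-f\|_{L^2}\lesssim |h|^\mu\|f\|_{H^\mu}$. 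Your route is slightly more elementary and avoids the abstract interpolation step; the paper's has the advantage of not needing the separate observation that $K(\gamma,\cdot)\in L^2_\eta$.
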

\begin{proof}
We shall prove that the finite difference $\partial_{\gamma}\lambda(\gamma+h) - \dg\lambda(\gamma) = \sum_{i=1}^{2}  \Gamma_{i}(\gamma+h) -  \Gamma_{i}(\gamma)$ can be bounded by a positive power of $h$. The highest order term is
\begin{align*}
	 \Gamma_{1}(\gamma+h) -  \Gamma_{1}(\gamma)&=  \Gamma_{11} +  \Gamma_{12} +  \Gamma_{13}
\end{align*}
where
$$
	\Gamma_{11} = \frac{1}{A(t)}\int_{\mathbb{T}} \frac{\partial_{\gamma}x_{-}(\gamma)}{|x_{-}(\gamma)|}\cdot (\dg^{2}x(\g-\eta+h)-\dg^{2}x(\g-\eta)) d\eta,
$$
$$
\Gamma_{12} =\frac{-1}{A(t)} \int_{\mathbb{T}} \partial_{\gamma}x_{-}(\gamma)\cdot \dg^{2}x(\g-\eta+h) \Big(\frac{1}{|x_{-}(\gamma)|} - \frac{1}{|x_{-}(\gamma + h)|}\Big)d\eta,
$$
$$
\Gamma_{13} =\frac{-1}{A(t)} \int_{\mathbb{T}} (\partial_{\gamma}x_{-}(\gamma)-\partial_{\gamma}x_{-}(\gamma+h))\cdot \frac{\dg^{2}x(\g-\eta+h)}{ |x_{-}(\gamma + h)|}d\eta.
$$
On one hand, by Cauchy-Schwarz's inequality 
\begin{equation}\label{interld:est1}
	\begin{aligned}
		| \Gamma_{11}| &\leq \| F(x)\|_{L^\infty}^2\int_{\mathbb{T}} \frac{\|\dg x\|_{C^{\mez+\mu}}\|F(x)\|_{L^{\infty}}}{|\eta|^{\mez-\mu}}|\dg^{2}x(\g-\eta+h)-\dg^{2}x(\g-\eta)| d\eta\\
		&\leq C\|F(x)\|^3_{L^{\infty}} \|\dg x\|_{C^{\mez+\mu}} \|\dg^{2}x\|_{L^{2}}.
	\end{aligned}
\end{equation}
On the other hand, using the mean value theorem before applying  Cauchy-Schwarz's inequality, we have
\begin{equation}\label{interld:est2}
	\begin{aligned}
		| \Gamma_{11}| &\leq \| F(x)\|_{L^\infty}^2\int_{\mathbb{T}} \frac{\|\dg x\|_{C^{\mez+\mu}}\|F(x)\|_{L^{\infty}}}{|\eta|^{\mez-\mu}}\int_{0}^{1}|h||\dg^{3}x(\g-\eta+rh)| dr d\eta  \\
		&\leq  C \|F(x)\|^3_{L^{\infty}}|h| \|\dg x\|_{C^{\mez+\mu}} \|\dg^{3}x\|_{L^{2}}.
	\end{aligned}
\end{equation}
Viewing $ \Gamma_{11}$ as a linear operator in $\p_\g^2$ and interpolating between \eqref{interld:est1} and \eqref{interld:est2} gives
$$| \Gamma_{11}| \leq C\|F(x)\|^3_{L^{\infty}} |h|^{\mu} \|\dg x\|_{C^{\mez+\mu}}\|\dg^{2}x\|_{H^{\mu}}.$$

Next, for $ \Gamma_{12}$, we have
\begin{align*}
	| \Gamma_{12}| &\leq \| F(x)\|_{L^\infty}^2\int_{\mathbb{T}} |\partial_{\gamma}x_{-}(\gamma)||\dg^{2}x(\g-\eta+h)| \frac{|x_{-}(\gamma + h)-x_{-}(\gamma)|}{|x_{-}(\gamma + h)||x_{-}(\gamma)|}d\eta\\
	&\lesssim \| F(x)\|_{L^\infty}^4\int_{\mathbb{T}}\int_{0}^{1} \|\partial_{\gamma}x\|_{C^{\frac{1}{2}+\mu}}|\dg^{2}x(\g-\eta+h)| \frac{|h||\dg x_{-}(\gamma + rh)|}{|\eta|^{\frac{3}{2}-\mu}} dr d\eta\\
	&\lesssim \| F(x)\|_{L^\infty}^4\|\partial_{\gamma}x\|_{C^{\frac{1}{2}+\mu}} \int_{\mathbb{T}}\int_{0}^{1}\int_{0}^{1} |\dg^{2}x(\g-\eta+h)| \frac{|h||\dg^{2}x(\gamma+r'\eta+rh)|}{|\eta|^{\frac{1}{2}-\mu}} dr dr' d\eta,
\end{align*}
and therefore
\begin{align*}
	| \Gamma_{12}|&\lesssim \| F(x)\|_{L^\infty}^4\|\partial_{\gamma}x\|_{C^{\frac{1}{2}+\mu}}\|\dg^2 x\|_{L^{2}_{r}}|h|^{\frac{1}{2}} \int_{\mathbb{T}} \frac{|\dg^{2}x(\g-\eta+h)|}{|\eta|^{\frac{1}{2}-\mu}} d\eta\\
	&\lesssim \| F(x)\|_{L^\infty}^4|h|^{\frac{1}{2}}\|\partial_{\gamma}x\|_{C^{\frac{1}{2}+\mu}}\|\dg^2 x\|_{L^{2}}^{2}.
\end{align*}
For $ \Gamma_{13}$, choosing  $a\in (0, 1)$ such that $a(\mez+\mu)<\mu$, we estimate
\begin{align*}
	| \Gamma_{13}| &\leq  \| F(x)\|_{L^\infty}^2\int_{\mathbb{T}} |\partial_{\gamma}x_{-}(\gamma)-\partial_{\gamma}x_{-}(\gamma+h)|\frac{|\dg^{2}x(\g-\eta+h)|}{ |\eta|}d\eta\\
	&=  \| F(x)\|_{L^\infty}^2\int_{\mathbb{T}}  |\partial_{\gamma}x_{-}(\gamma)-\partial_{\gamma}x_{-}(\gamma+h)|^{a}|\partial_{\gamma}x_{-}(\gamma)-\partial_{\gamma}x_{-}(\gamma+h)|^{1-a}\frac{|\dg^{2}x(\g-\eta+h)|}{ |\eta|}d\eta\\
	&\lesssim  \| F(x)\|_{L^\infty}^2\|\partial_{\gamma}x\|_{C^{\frac{1}{2}+\mu}}\int_{\mathbb{T}}  |h|^{a(\frac{1}{2}+\mu)}\frac{|\dg^{2}x(\g-\eta+h)|}{|\eta|^{1-(1-a)(\frac{1}{2}+\mu)}}d\eta\\
	&\lesssim  \| F(x)\|_{L^\infty}^2|h|^{a(\frac{1}{2}+\mu)}\|\partial_{\gamma}x\|_{C^{\frac{1}{2}+\mu}}\|\p_\g^2x\|_{L^2}.
\end{align*}
The finite difference for $ \Gamma_2$ is decomposed as
\begin{equation}
	 \Gamma_{2}(\gamma+h)-  \Gamma_{2}(\gamma) =  \Gamma_{21} +  \Gamma_{22} +  \Gamma_{23} +  \Gamma_{24} 
\end{equation}
where
$$\Gamma_{21} = \frac{1}{2A(t)}\int_{\mathbb{T}} \frac{(|\partial_{\gamma}x_{-}(\gamma+h)|^{2}-|\partial_{\gamma}x_{-}(\gamma)|^{2}) x_{-}(\gamma)\cdot\partial_{\gamma}x_{-}(\gamma)}{|x_{-}(\gamma)|^{3}} d\eta,
$$
$$\Gamma_{22} = \frac{1}{2A(t)}\int_{\mathbb{T}} \frac{|\partial_{\gamma}x_{-}(\gamma+h)|^{2} (x_{-}(\gamma+h)- x_{-}(\gamma))\cdot\partial_{\gamma}x_{-}(\gamma)}{|x_{-}(\gamma)|^{3}} d\eta,$$
$$\Gamma_{23} = \frac{1}{2A(t)}\int_{\mathbb{T}} \frac{|\partial_{\gamma}x_{-}(\gamma+h)|^{2}  x_{-}(\gamma+h)\cdot(\partial_{\gamma}x_{-}(\gamma+h)-\partial_{\gamma}x_{-}(\gamma))}{|x_{-}(\gamma)|^{3}} d\eta,$$
and
$$\Gamma_{24} = \frac{1}{2A(t)}\int_{\mathbb{T}} |\partial_{\gamma}x_{-}(\gamma+h)|^{2}  x_{-}(\gamma+h)\cdot\partial_{\gamma}x_{-}(\gamma+h)\Big(\frac{1}{|x_{-}(\gamma+h)|^{3}} - \frac{1}{|x_{-}(\gamma)|^{3}} \Big) d\eta.
$$
We successively bound the $ \Gamma_{2j}$ as follows.
\begin{align*}
	| \Gamma_{21}| &\leq  \| F(x)\|_{L^\infty}^2\frac{1}{2}\int_{\mathbb{T}} \frac{(|\partial_{\gamma}x_{-}(\gamma+h)|+|\partial_{\gamma}x_{-}(\gamma)|)|\partial_{\gamma}x_{-}(\gamma+h)-\partial_{\gamma}x_{-}(\gamma)| |\partial_{\gamma}x_{-}(\gamma)|}{|x_{-}(\gamma)|^{2}} d\eta\\
	&\lesssim \| F(x)\|_{L^\infty}^4\int_{\mathbb{T}} \int_{0}^{1}\frac{\|\dg x\|_{C^{\frac{1}{2}+\mu}}^{2}|h|^{\frac{1}{2}+\mu}|\partial_{\gamma}^{2}x(\gamma-r\eta)|}{|\eta|^{\frac{1}{2}-\mu}} dr d\eta\\
	&\lesssim \| F(x)\|_{L^\infty}^4|h|^{\frac{1}{2}+\mu}\|\dg x\|_{C^{\frac{1}{2}+\mu}}^{2}\|x\|_{\dot{H}^{2}}.
\end{align*}
\begin{align*}
	| \Gamma_{22}| &\leq \| F(x)\|_{L^\infty}^2\int_{\mathbb{T}} \frac{|\partial_{\gamma}x_{-}(\gamma+h)|^{2} |x_{-}(\gamma+h)- x_{-}(\gamma)||\partial_{\gamma}x_{-}(\gamma)|}{|x_{-}(\gamma)|^{3}} d\eta\\
	&\lesssim  \| F(x)\|_{L^\infty}^5 \int_{\mathbb{T}} \int_{0}^{1} |h|\frac{\|\dg x\|_{C^{\frac{1}{2}+\mu}}^{3} |\dg x_{-}(\gamma+rh)|}{|\eta|^{\frac{3}{2}-3\mu}} dr d\eta\\
	&\lesssim  \| F(x)\|_{L^\infty}^5\int_{\mathbb{T}} \int_{0}^{1} |h|\frac{\|\dg x\|_{C^{\frac{1}{2}+\mu}}^{4}}{|\eta|^{1-4\mu}} dr d\eta\lesssim  \| F(x)\|_{L^\infty}^5|h|\|\dg x\|_{C^{\frac{1}{2}+\mu}}^{4}.
\end{align*}
\begin{align*}
	| \Gamma_{23}| &\le\| F(x)\|_{L^\infty}^2\int_{\mathbb{T}} \frac{|\partial_{\gamma}x_{-}(\gamma+h)|^{2} |x_{-}(\gamma+h)| |\partial_{\gamma}x_{-}(\gamma+h)-\partial_{\gamma}x_{-}(\gamma)|}{|x_{-}(\gamma)|^{3}} d\eta\\
	&\lesssim \| F(x)\|_{L^\infty}^5\|\dg x\|_{C^{\frac{1}{2}+\mu}}^{3}\|x\|_{C^{1}} |h|^{\frac{1}{2}+\mu} \int_{\mathbb{T}} \frac{d\eta}{|\eta|^{1-2\mu}} \\
	&\les\| F(x)\|_{L^\infty}^5\|\dg x\|_{C^{\frac{1}{2}+\mu}}^{3} |h|^{\frac{1}{2}+\mu}.
\end{align*}
Finally, for $a\in (0, 1)$ satisfying $(\mez+\mu)a<4\mu$, we have
\begin{align*}
	| \Gamma_{24}| &\leq\| F(x)\|_{L^\infty}^2 \frac{1}{2}\int_{\mathbb{T}} |\partial_{\gamma}x_{-}(\gamma+h)|^{2}  |x_{-}(\gamma+h)||\partial_{\gamma}x_{-}(\gamma+h)| |x_{-}(\gamma+h)-x_{-}(\gamma)| \\ &\hspace{0.5in}\times \frac{1}{|x_{-}(\gamma+h)||x_{-}(\gamma)|} \Big(\frac{1}{|x_{-}(\gamma+h)|^{2}} +\frac{1}{|x_{-}(\gamma+h)||x_{-}(\gamma)|} + \frac{1}{|x_{-}(\gamma)|^{2}} \Big) d\eta\\
	&\lesssim \| F(x)\|_{L^\infty}^5\int_{\mathbb{T}} \|\partial_{\gamma}x\|_{C^{\frac{1}{2}+\mu}}^{3}  \|x\|_{C^{1}} |x_{-}(\gamma+h)-x_{-}(\gamma)|^{a}|x_{-}(\gamma+h)-x_{-}(\gamma)|^{1-a}\frac{d\eta}{|\eta|^{\frac{3}{2} -3\mu}},
\end{align*}
and therefore
\begin{align*}
|\Gamma_{24}|	&\lesssim \| F(x)\|_{L^\infty}^5|h|^{(\frac{1}{2}+\mu)a}\|x\|_{C^{1}}\|\dg x\|_{C^{\frac{1}{2}+\mu}}^{4}\int_{\mathbb{T}} \frac{d\eta}{|\eta|^{\frac{3}{2} -3\mu - (\frac{1}{2}+\mu)(1-a)}} \\
	&\lesssim \| F(x)\|_{L^\infty}^5|h|^{(\frac{1}{2}+\mu)a}\|x\|_{C^{1}}\|\dg x\|_{C^{\frac{1}{2}+\mu}}^{4}.
\end{align*}
Gathering the above estimates, we conclude the proof of Proposition \ref{prop:ld:Holder}.  
\end{proof}
	\subsubsection{Sobolev estimate for $\p_\g \ld$}
Our goal is to establish the following bound for $\| \dg\lambda\|_{\dot H^{\frac12}}$.
\begin{prop}
For any $\mu\in (0, \mez)$, there exists a polynomial $\cP(\cdot, \cdot)$ such that 
\bq\label{est:ld:Sobolev}
\| \p_\g \ld\|_{\dot H^\mez}\le \cP(\|F(x)\|_{L^{\infty}}, \| \p_\g x\|_{C^{\mez+\mu}})\| \p_\g^2x\|_{L^2}.
\eq
\end{prop}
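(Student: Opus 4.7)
Since $\Gamma_3$ is independent of $\gamma$, $\|\Gamma_3\|_{\dot H^{1/2}}=0$, so I will estimate $\Gamma_1$ and $\Gamma_2$ in $\dot H^{1/2}$ separately. My plan is to combine the Sobolev--Slobodeckij characterization
\[
\|f\|_{\dot H^{1/2}}^2 \simeq \int_\T\int_\T \frac{|f(\gamma+h)-f(\gamma)|^2}{|h|^2}\, d\gamma\, dh
\]
with the finite-difference decompositions $\Gamma_1(\gamma+h)-\Gamma_1(\gamma) = \sum_{j=1}^3 \Gamma_{1j}$ and $\Gamma_2(\gamma+h)-\Gamma_2(\gamma) = \sum_{j=1}^4 \Gamma_{2j}$ already introduced in the proof of Proposition \ref{prop:ld:Holder}. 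The distinction is that I now need $L^2_\gamma$ bounds on each piece carrying a power of $|h|$ strictly greater than $\mez$, so that the Slobodeckij integral $\int_\T |h|^{-2}\|\cdot\|_{L^2_\gamma}^2\, dh$ converges.

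For $\Gamma_2$, whose integrands involve only $x$ and $\p_\gamma x$, I apply Minkowski's inequality in $\eta$ and use the arc-chord bound together with pointwise control on $\p_\gamma x_-$ and $x_-$. The $\|\p_\gamma^2 x\|_{L^2}$ factor demanded by the right-hand side of \eqref{est:ld:Sobolev} is supplied by invoking the Cauchy--Schwarz bound $\|\p_\gamma x_-(\cdot,\eta)\|_{L^2_\gamma}\lesssim |\eta|^{\mez}\|\p_\gamma^2 x\|_{L^2}$ coming from the fundamental theorem of calculus; the remaining finite differences of $x$ or $\p_\gamma x$ produce powers of $|h|$ of order $\mez+\mu$ or $1$. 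Combining, I expect $\|\Gamma_{2j}\|_{L^2_\gamma}\le \cP(\|F\|_{L^\infty},\|\p_\gamma x\|_{C^{\mez+\mu}}) |h|^{\mez+\delta}\|\p_\gamma^2 x\|_{L^2}$ for some $\delta>0$, which plugged into the Slobodeckij integral gives the desired bound.

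For $\Gamma_1$, the pieces $\Gamma_{12}$ and $\Gamma_{13}$ have their finite differences falling on $|x_-|^{-1}$ or $\p_\gamma x_-$, not on $\p_\gamma^2 x$, so the Minkowski-in-$\eta$ argument applies and yields $\|\Gamma_{1j}\|_{L^2_\gamma}\le \cP |h|^{\mez+\delta}\|\p_\gamma^2 x\|_{L^2}$ for $j=2,3$. The main obstacle is $\Gamma_{11}$, which carries the factor $\p_\gamma^2 x(\gamma-\eta+h)-\p_\gamma^2 x(\gamma-\eta)$: its $L^2_\gamma$ norm is controlled only by $2\|\p_\gamma^2 x\|_{L^2}$ with no smallness in $|h|$, so a direct Minkowski estimate fails.

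To handle $\Gamma_{11}$, I view the whole $\Gamma_1$ as a linear operator $Tf(\gamma)=\int_\T K(\gamma,\eta)\cdot f(\gamma-\eta)\,d\eta$ acting on $f=\p_\gamma^2 x$, with kernel $K(\gamma,\eta)=\p_\gamma x_-(\gamma,\eta)/(A(t)|x_-(\gamma,\eta)|)$ satisfying $|K(\gamma,\eta)|\lesssim \cP\cdot |\eta|^{-\mez+\mu}$. On the torus, convolution by a kernel of size $|\eta|^{-\mez+\mu}$ corresponds via Fourier coefficients $\sim|n|^{-\mez-\mu}$ to an operator that smooths by $\mez+\mu$ derivatives. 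Combining this pointwise kernel decay with the H\"older estimates on $K$ in $\gamma$ already derived in the proof of Proposition \ref{prop:ld:Holder}---interpreted as commutator bounds in the spirit of Lemma \ref{lemm:cmt}---I expect $\|Tf\|_{\dot H^{\mez+\mu}}\le \cP\|f\|_{L^2}$, hence $\|\Gamma_1\|_{\dot H^{1/2}}\le \cP\|\p_\gamma^2 x\|_{L^2}$ via the trivial embedding $\dot H^{\mez+\mu}\hookrightarrow \dot H^{\mez}$ on $\T$. The hardest step is establishing this mapping property, which requires precisely balancing the $|\eta|^{-\mez+\mu}$ kernel decay, the H\"older smoothness of $K$ in $\gamma$, and the Slobodeckij weight $|h|^{-2}$.
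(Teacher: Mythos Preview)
Your plan for $\Gamma_2$ and for $\Gamma_{12},\Gamma_{13}$ is essentially right, though for the latter you cannot literally reuse the pointwise H\"older bounds of Proposition~\ref{prop:ld:Holder}: those produce only $|h|^{1/2}$ (for $\Gamma_{12}$) and $|h|^{a(\frac12+\mu)}<|h|^{1/2}$ (for $\Gamma_{13}$), which leave the Slobodeckij integral $\int |h|^{-2}|h|\,dh$ logarithmically divergent. You must instead interpolate the finite differences between $h$ and $\eta$, e.g.\ use $|x_-(\gamma+h)-x_-(\gamma)|\lesssim |h|^{b}|\eta|^{1-b}$ with $\frac12<b<\frac12+\mu$; this is exactly what the paper does.

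The real gap is $\Gamma_{11}$. Your claim that ``convolution by a kernel of size $|\eta|^{-\frac12+\mu}$ corresponds via Fourier coefficients $\sim|n|^{-\frac12-\mu}$ to an operator that smooths by $\frac12+\mu$ derivatives'' is false as a general principle: pointwise size of a kernel does not determine decay of its Fourier coefficients, and $T$ is not a convolution since $K$ depends on $\gamma$. Any freezing/commutator decomposition you attempt still leaves the term $\int K(\gamma,\eta)\big(f(\gamma+h-\eta)-f(\gamma-\eta)\big)\,d\eta$, whose $L^2_\gamma$ norm gains no power of $|h|$ from kernel size alone. So the asserted mapping $T:L^2\to\dot H^{\frac12+\mu}$ is not established by your argument.

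The paper's device is an integration by parts in $\eta$: one observes
\[
\partial_\gamma^2 x(\gamma-\eta)-\partial_\gamma^2 x(\gamma-\eta-\xi)=\partial_\eta\big\{\partial_\gamma x(\gamma)-\partial_\gamma x(\gamma-\eta)-\partial_\gamma x(\gamma-\xi)+\partial_\gamma x(\gamma-\eta-\xi)\big\},
\]
a total $\eta$-derivative of a \emph{second-order} difference of $\partial_\gamma x$. Integrating by parts transfers $\partial_\eta$ onto the kernel $\partial_\gamma x_-/|x_-|$, which involves only $x,\partial_\gamma x$ and is differentiable in $\eta$; the resulting second-order difference obeys the interpolated bound
\[
\big|\partial_\gamma x(\gamma)-\partial_\gamma x(\gamma-\eta)-\partial_\gamma x(\gamma-\xi)+\partial_\gamma x(\gamma-\eta-\xi)\big|\lesssim |\xi|^{(\frac12+\mu)b}|\eta|^{(\frac12+\mu)(1-b)}\|\partial_\gamma x\|_{C^{\frac12+\mu}}
\]
for any $b\in[0,1]$. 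Choosing $\frac{1}{1+2\mu}<b<1$ makes both the $\xi$- and $\eta$-integrals converge, closing the estimate with a factor $\|\partial_\gamma^2 x\|_{L^2}$ coming from the remaining $\partial_\gamma^2 x(\gamma-\eta)$ in the integrand. This integration by parts is the missing idea.
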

 \begin{proof}
 We use the  decomposition \eqref{dgammalambda}  for $\p_\g \ld$ together with the double-integral definition of $\| \cdot\|_{\dot H^\mez}$ to have
\begin{align*}
	\| \dg\lambda\|_{\dot H^{\frac12}}^{2} &= \int_{\mathbb{T}}\int_{\mathbb{T}}\frac{|\dg\lambda(\gamma)- \dg\lambda(\gamma-\xi)|^{2}}{|\xi|^{2}} d\xi d\gamma\leq \sum_{i=1}^{3}\wt{\Gamma}_{1i} + \sum_{i=1}^{4}\wt{\Gamma}_{2i}
\end{align*}
where
$$\wt{\Gamma}_{11} =\frac{1}{A(t)} \int_{\mathbb{T}}d\gamma\int_{\mathbb{T}}\frac{d\xi}{|\xi|^{2}} \Big|\int_{\mathbb{T}} \frac{\dg x_{-}(\gamma)}{|x_{-}(\gamma)|} \cdot (\dg^{2}x(\gamma-\eta) - \dg^{2}x(\gamma-\eta-\xi)) d\eta \Big|^{2},
$$
$$
\wt{\Gamma}_{12} = \frac{1}{A(t)}\int_{\mathbb{T}}d\gamma\int_{\mathbb{T}}\frac{d\xi}{|\xi|^{2}} \Big|\int_{\mathbb{T}} \frac{\dg x_{-}(\gamma)-\dg x_{-}(\gamma-
	\xi)}{|x_{-}(\gamma)|} \cdot  \dg^{2}x(\gamma-\eta-\xi) d\eta \Big|^{2},$$
$$\wt{\Gamma}_{13} =\frac{1}{A(t)} \int_{\mathbb{T}}d\gamma\int_{\mathbb{T}}\frac{d\xi}{|\xi|^{2}} \Big|\int_{\mathbb{T}} \dg x_{-}(\gamma-\xi) \cdot  \dg^{2}x(\gamma-\eta-\xi) \Big(\frac{1}{|x_{-}(\gamma)|} - \frac{1}{|x_{-}(\gamma-\xi)|}\Big) d\eta \Big|^{2},
$$
and
$$\wt{\Gamma}_{21} = \frac{1}{2A(t)} \int_{\mathbb{T}}d\gamma\int_{\mathbb{T}}\frac{d\xi}{|\xi|^{2}} \Big|\int_{\mathbb{T}} \frac{(|\partial_{\gamma}x_{-}(\gamma)|^{2}-|\partial_{\gamma}x_{-}(\gamma-\xi)|^{2}) x_{-}(\gamma)\cdot\partial_{\gamma}x_{-}(\gamma)}{|x_{-}(\gamma)|^{3}} d\eta \Big|^{2},
$$
$$\wt{\Gamma}_{22} = \frac{1}{2A(t)} \int_{\mathbb{T}}d\gamma\int_{\mathbb{T}}\frac{d\xi}{|\xi|^{2}} \Big|\int_{\mathbb{T}} \frac{|\partial_{\gamma}x_{-}(\gamma-\xi)|^{2} (x_{-}(\gamma)- x_{-}(\gamma-\xi))\cdot\partial_{\gamma}x_{-}(\gamma)}{|x_{-}(\gamma)|^{3}} d\eta \Big|^{2},$$	$$\wt{\Gamma}_{23} = \frac{1}{2A(t)} \int_{\mathbb{T}}d\gamma\int_{\mathbb{T}}\frac{d\xi}{|\xi|^{2}} \Big|\int_{\mathbb{T}}\frac{|\partial_{\gamma}x_{-}(\gamma-\xi)|^{2}  x_{-}(\gamma-\xi)\cdot(\partial_{\gamma}x_{-}(\gamma)-\partial_{\gamma}x_{-}(\gamma-\xi))}{|x_{-}(\gamma)|^{3}} d\eta \Big|^{2},$$
\begin{align*}
\wt{\Gamma}_{24} = \frac{1}{2A(t)} \int_{\mathbb{T}}d\gamma\int_{\mathbb{T}}\frac{d\xi}{|\xi|^{2}} \Big|\int_{\mathbb{T}}|\partial_{\gamma}x_{-}(\gamma-\xi)|^{2}&  x_{-}(\gamma-\xi)\cdot\partial_{\gamma}x_{-}(\gamma-\xi)\\
&\times\Big(\frac{1}{|x_{-}(\gamma)|^{3}} - \frac{1}{|x_{-}(\gamma-\xi)|^{3}} \Big) d\eta \Big|^{2}.
\end{align*}
Using
$$\dg^{2}x(\gamma-\eta) - \dg^{2}x(\gamma-\eta-\xi) = \partial_{\eta}\left\{\dg x(\gamma)- \dg x(\gamma-\eta)- \dg x(\gamma-\xi)+\dg x(\gamma-\eta-\xi)\right\}$$
we can integrate by parts to obtain
$$\wt{\Gamma}_{11} \leq \wt{\Gamma}_{111} + \wt{\Gamma}_{112}$$
where
\bq\label{wtT111}
\begin{aligned}\wt{\Gamma}_{111}& =\frac{1}{A(t)} \int_{\mathbb{T}}d\gamma\int_{\mathbb{T}}\frac{d\xi}{|\xi|^{2}} \Big|\int_{\mathbb{T}} \frac{\dg^{2} x(\gamma-\eta)}{|x_{-}(\gamma)|} \\
&\quad\quad\cdot \left\{\dg x(\gamma)- \dg x(\gamma-\eta)- \dg x(\gamma-\xi)+\dg x(\gamma-\eta-\xi)\right\} d\eta \Big|^{2}
\end{aligned}
\eq
and\begin{align*}
\wt{\Gamma}_{112} &= \frac{1}{A(t)}\int_{\mathbb{T}}d\gamma\int_{\mathbb{T}}\frac{d\xi}{|\xi|^{2}} \Big|\int_{\mathbb{T}} \frac{[x_{-}(\gamma)\cdot \dg x(\gamma-\eta)] \dg x_{-}(\gamma)}{|x_{-}(\gamma)|^{3}}\\
&\quad\quad \cdot \left\{\dg x(\gamma)- \dg x(\gamma-\eta)- \dg x(\gamma-\xi)+\dg x(\gamma-\eta-\xi)\right\} d\eta \Big|^{2}.
\end{align*}
Now, for any $b\in [0, 1]$ we have
$$|\dg x(\gamma)- \dg x(\gamma-\eta)- \dg x(\gamma-\xi)+\dg x(\gamma-\eta-\xi)| \leq |\xi|^{(\frac{1}{2}+\mu)b}|\eta|^{(\frac{1}{2}+\mu)(1-b)} \|\dg x\|_{C^{\frac{1}{2}+\mu}}.$$
Hence, for $\frac{1}{1+2\mu} < b < 1$ we obtain
\begin{align*}
	\wt{\Gamma}_{111} &\leq \|F(x)\|_{L^{\infty}}^2\int_{\mathbb{T}}d\gamma\int_{\mathbb{T}}\frac{d\xi}{|\xi|^{2-(1+2\mu)b}} \Big|\int_{\mathbb{T}} |\dg^{2} x(\gamma-\eta)|\|\dg x\|_{C^{\frac{1}{2}+\mu}}\|F(x)\|_{L^{\infty}} \frac{d\eta}{|\eta|^{1-(\frac{1}{2}+\mu)(1-b)}} \Big|^{2}\\
	&\les \|F(x)\|_{L^{\infty}}^4\|\dg x\|_{C^{\frac{1}{2}+\mu}}^{2}\Big\| \dg^{2}x \ast |\cdot|^{-1+(\frac{1}{2}+\mu)(1-b)}\Big\|_{L^{2}}^{2}\\
	&\les  \|F(x)\|_{L^{\infty}}^4\|\dg x\|_{C^{\frac{1}{2}+\mu}}^{2} \|\p_\g^2x\|_{L^2}^{2} .
\end{align*}
On the other hand, writing 
\[
\dg x(\gamma)- \dg x(\gamma-\eta)- \dg x(\gamma-\xi)+\dg x(\gamma-\eta-\xi)=\eta\int_0^1\dg x(\gamma-r\eta)dr- \eta\int_0^1\dg x(\gamma-\xi-r\eta)dr
\]
yields
\begin{align*}
	\wt{\Gamma}_{112} &\les \|F(x)\|_{L^{\infty}}^{4}\int_{\mathbb{T}}d\gamma\int_{\mathbb{T}}\frac{d\xi}{|\xi|^{2-(1+2\mu)b}} \Big(\int_{\mathbb{T}}d\eta\int_{0}^{1}dr \|\dg x\|_{L^{\infty}} \|\dg x\|_{C^{\frac{1}{2}+\mu}}
	 \frac{ |\dg^{2} x(\gamma-r\eta)|}{|\eta|^{1-(\frac{1}{2}+\mu)(1-b)}} \Big)^{2}\\
&\les\|\dg x\|^2_{L^{\infty}} \|\dg x\|^2_{C^{\frac{1}{2}+\mu}}\|F(x)\|_{L^{\infty}}^{6}\int_{\mathbb{T}}d\gamma \Big(\int_{\mathbb{T}}\int_{0}^{1} \frac{ |\dg^{2} x(\gamma-\xi)|}{|\xi|^{1-(\frac{1}{2}+\mu)(1-b)}} \frac{dr}{r^{(\frac{1}{2}+\mu)(1-b)}} d\xi \Big)^{2}\\
	&\les\|\dg x\|_{L^{\infty}}^2 \|\dg x\|_{C^{\frac{1}{2}+\mu}}^2\|F(x)\|_{L^{\infty}}^{6}\Big\||\dg^{2}x|\ast |\cdot|^{-1+(\frac{1}{2}+\mu)(1-b)})\Big\|_{L^{2}_\g}^2\\
	& \les\|\dg x\|_{L^{\infty}}^2 \|\dg x\|_{C^{\frac{1}{2}+\mu}}^2\|F(x)\|_{L^{\infty}}^{6}\|\p_\g^2x\|_{L^2}^{2},
\end{align*}
provided that $\frac{1}{1+2\mu} < b < 1$.

We note that $\wt{\Gamma}_{12}$ is similar to $\wt{\Gamma}_{111}$ given by \eqref{wtT111} since 
$$\wt{\Gamma}_{12} = \int_{\mathbb{T}}d\gamma\int_{\mathbb{T}}\frac{d\xi}{|\xi|^{2}} \Big|\int_{\mathbb{T}} \frac{\dg x(\gamma)- \dg x(\gamma-\eta)- \dg x(\gamma-\xi)+\dg x(\gamma-\eta-\xi)}{|x_{-}(\gamma)|} \cdot \dg^2 x(\gamma-\eta-\xi) d\eta \Big|^{2}.$$
Regarding $\wt{\Gamma}_{13}$, the bound
\begin{align*}
	\Big|\frac{1}{|x_{-}(\gamma)|} - \frac{1}{|x_{-}(\gamma-\xi)|}\Big| %&= \frac{||x_{-}(\gamma)|-|x_{-}(\gamma-\xi)||}{|x_{-}(\gamma)||x_{-}(\gamma-\xi)|}\\
	&\leq \frac{|x_{-}(\gamma)-x_{-}(\gamma-\xi)|}{|x_{-}(\gamma)||x_{-}(\gamma-\xi)|}\leq \|\dg x\|_{L^{\infty}}\|F(x)\|_{L^{\infty}}^{2} \frac{|\xi|^{b}}{|\eta|^{1+b}}
\end{align*}
implies
\begin{align*}
	\wt{\Gamma}_{13} &\leq \int_{\mathbb{T}}d\gamma\int_{\mathbb{T}}\frac{d\xi}{|\xi|^{2-2b}} \Big|\int_{\mathbb{T}} \|\dg x\|_{C^{\mez+\mu}}  \|\dg x\|_{L^{\infty}}\|F\|_{L^{\infty}}^{2} \frac{| \dg^{2}x(\gamma-\eta-\xi) |}{|\eta|^{\mez+b-\mu}} d\eta \Big|^{2}\\
	&\les \|\dg x\|_{C^{\mez+\mu}}^2  \|\dg x\|_{L^{\infty}}^2\|F(x)\|_{L^{\infty}}^4 \|\p^2_\g x\|_{L^2}^{2},
\end{align*}	
provided that  $\mez<b < \mez+\mu$. $\wt{\Gamma}_{21}$ can be controlled as
\begin{align*}
|\wt{\Gamma}_{21}| &\lesssim\|F(x)\|_{L^{\infty}}^{2} \int_{\mathbb{T}}d\gamma\int_{\mathbb{T}}\frac{d\xi}{|\xi|^{2}} \Big|\int_{\mathbb{T}} (|\partial_{\gamma}x_{-}(\gamma)|+|\partial_{\gamma}x_{-}(\gamma-\xi)|)(|\partial_{\gamma}x_{-}(\gamma)-\partial_{\gamma}x_{-}(\gamma-\xi)|)\\
&\qquad\qquad\qquad\qquad\times \|F(x)\|_{L^{\infty}}^{2} \|\partial_{\gamma}x\|_{C^{\frac{1}{2}+\mu}} \frac{d\eta}{|\eta|^{\frac{3}{2}-\mu}} \Big|^{2}\\
&\lesssim \|F(x)\|_{L^{\infty}}^6\|\partial_{\gamma}x\|_{C^{\frac{1}{2}+\mu}}^{6}\int_{\mathbb{T}}d\gamma\int_{\mathbb{T}}\frac{d\xi}{|\xi|^{2}} \Big|\int_{\mathbb{T}} \frac{|\xi|^{\frac{1}{2}+\mu}}{|\eta|^{1-2\mu}} d\eta \Big|^{2}\\
%&=\|\partial_{\gamma}x\|_{C^{\frac{1}{2}+\mu}}^{3}\int_{\mathbb{T}}d\gamma\int_{\mathbb{T}}\frac{d\xi}{|\xi|^{1-2\mu}} \Big|\int_{\mathbb{T}} \frac{1}{|\eta|^{1-2\mu}} d\eta \Big|^{2}\\
&\les\|F(x)\|_{L^{\infty}}^6\|\partial_{\gamma}x\|_{C^{\frac{1}{2}+\mu}}^{6}.
\end{align*}
Next, for  $\frac{1}{2} < b < \frac{1}{2} + 3\mu$ we estimate
\begin{align*}
|\wt{\Gamma}_{22}| &\lesssim \|F(x)\|_{L^{\infty}}^2\int_{\mathbb{T}}d\gamma\int_{\mathbb{T}}\frac{d\xi}{|\xi|^{2}} \Big|\int_{\mathbb{T}}|x_{-}(\gamma)- x_{-}(\gamma-\xi)|^{b}|x_{-}(\gamma)- x_{-}(\gamma-\xi)|^{1-b}\\
&\qquad\qquad\qquad\qquad\times\|\partial_{\gamma}x\|_{C^{\frac{1}{2}+\mu}}^{3}\|F(x)\|_{L^{\infty}}^3\frac{d\eta}{ |\eta|^{\frac{3}{2}-3\mu}}\Big|^{2}\\
&\les \|F(x)\|_{L^{\infty}}^8\int_{\mathbb{T}}d\gamma\int_{\mathbb{T}}\frac{d\xi}{|\xi|^{2}} \Big|\int_{\mathbb{T}} \frac{|\xi|^{b}\|x\|_{C^{1}}\|\partial_{\gamma}x\|_{C^{\frac{1}{2}+\mu}}^{3}}{|\eta|^{\frac{1}{2}-3\mu + b }} d\eta \Big|^{2}\\
&\lesssim  \|F(x)\|_{L^{\infty}}^8\|\partial_{\gamma}x\|_{C^{\frac{1}{2}+\mu}}^{8}.
\end{align*}
$\wt{\Gamma}_{23}$ can be bounded as
\begin{align*}
|\wt{\Gamma}_{23}| &\lesssim  \|F(x)\|_{L^{\infty}}^2\int_{\mathbb{T}}d\gamma\int_{\mathbb{T}}\frac{d\xi}{|\xi|^{1-2\mu}} \Big|\int_{\mathbb{T}}\|\partial_{\gamma}x\|_{C^{\frac{1}{2}+\mu}}^{3}\|x\|_{C^{1}}\|F(x)\|_{L^{\infty}}^3\frac{d\eta}{|\eta|^{1-2\mu}} \Big|^{2}\\
&\lesssim \|F(x)\|_{L^{\infty}}^8\|\partial_{\gamma}x\|_{C^{\frac{1}{2}+\mu}}^{8}.
\end{align*}
Finally,  for $\frac{1}{2} < b < \frac{1}{2} + 3\mu$ we have
\begin{align*}
|\wt{\Gamma}_{24}| &\lesssim \|F(x)\|_{L^{\infty}}^2 \int_{\mathbb{T}}d\gamma\int_{\mathbb{T}}\frac{d\xi}{|\xi|^{2}} \Big(\int_{\mathbb{T}}|\partial_{\gamma}x_{-}(\gamma-\xi)|^{2}  |x_{-}(\gamma-\xi)||\partial_{\gamma}x_{-}(\gamma-\xi)||x_{-}(\gamma)-x_{-}(\gamma-\xi)|
\\
&\hspace{0.5in}\times \frac{1}{|x_{-}(\gamma)||x_{-}(\gamma-\xi)|} \Big(\frac{1}{|x_{-}(\gamma-\xi)|^{2}} +\frac{1}{|x_{-}(\gamma-\xi)||x_{-}(\gamma)|} + \frac{1}{|x_{-}(\gamma)|^{2}} \Big)d\eta \Big)^{2}\\
&\lesssim \|F(x)\|_{L^{\infty}}^2\int_{\mathbb{T}}d\gamma\int_{\mathbb{T}}\frac{d\xi}{|\xi|^{2}} \Big(\int_{\mathbb{T}}\|F(x)\|_{L^{\infty}}^3\|\partial_{\gamma}x\|_{C^{\frac{1}{2}+\mu}}^{3} \\
&\qquad\qquad\qquad\qquad\quad\times |x_{-}(\gamma)-x_{-}(\gamma-\xi)|^{a}|x_{-}(\gamma)-x_{-}(\gamma-\xi)|^{1-a} \frac{d\eta}{|\eta|^{\frac{3}{2}-3\mu}} \Big)^{2},
\end{align*}
and therefore
\begin{align*}
|\wt{\Gamma}_{24}|&\lesssim \|F(x)\|_{L^{\infty}}^8\int_{\mathbb{T}}d\gamma\int_{\mathbb{T}}\frac{d\xi}{|\xi|^{2-2a}} \Big(\int_{\mathbb{T}}\|\partial_{\gamma}x\|_{C^{\frac{1}{2}+\mu}}^{3}  \|x\|_{C^{1}} \frac{d\eta}{|\eta|^{\frac{1}{2}-3\mu + a}} \Big)^{2}\\
&\lesssim \|F(x)\|_{L^{\infty}}^8\|\partial_{\gamma}x\|_{C^{\frac{1}{2}+\mu}}^8.
\end{align*}
Gathering the above estimates lead to the desired  bound \eqref{est:ld:Sobolev}.
\end{proof}
\subsection{Estimates for tangential terms $T_j$}

%Let us know consider the tangential terms:
%\begin{align*}
%	\int_{\mathbb{T}} \Lambda^{2s}\dg^{2}x(\gamma) \dg^{2}\partial_{t}x(\gamma) d\gamma &= \text{non tangential terms} + \int_{\mathbb{T}} \Lambda^{2s}\dg^{2}x(\gamma) \dg^{2}(\lambda(\gamma) \dg x(\gamma)) d\gamma\\
%	&=  \text{non tangential terms} + I_{1} + I_{2} + I_{3}
%\end{align*}
Recall that 
\begin{align}
&T_{1} = \int_{\mathbb{T}} \Lambda^{2s}\dg^{2}x(\gamma) \lambda(\gamma) \cdot\dg^{3} x(\gamma)d\gamma,\\
&T_{2} = 2\int_{\mathbb{T}} \Lambda^{2s}\dg^{2}x(\gamma) \dg \lambda(\gamma)\cdot \dg^{2} x(\gamma) d\gamma,\\
&T_{3} = \int_{\mathbb{T}} \Lambda^{2s}\dg^{2}x(\gamma) \dg^{2}\lambda(\gamma) \cdot\dg x(\gamma) d\gamma.
\end{align}
We first rewrite $T_1$ using commutator and integration by parts as
\begin{align*}
	T_{1} %&= \int_{\mathbb{T}} \Lambda^{s}\dg^{2}x(\gamma) \Lambda^{s}( \lambda(\gamma) \dg^{3} x(\gamma))d\gamma\\ 
	&= \int_{\mathbb{T}} \Lambda^{s}\dg^{2}x(\gamma)  \lambda(\gamma)\cdot \Lambda^{s}\dg^{3} x(\gamma)d\gamma + \int_{\mathbb{T}} \Lambda^{s}\dg^{2}x(\gamma) \cdot[\Lambda^{s}, \lambda(\gamma)] \dg^{3} x(\gamma)d\gamma \\
	 &=-\mez\int_{\mathbb{T}} \p_\g\ld   |\Lambda^{s}\dg^{2} x(\gamma)|^2d\gamma + \int_{\mathbb{T}} \Lambda^{s}\dg^{2}x(\gamma)\cdot [\Lambda^{s}, \lambda(\gamma)] \dg^{3} x(\gamma)d\gamma \\
	& := T_{11} +T_{12}.
\end{align*}
%Integrating by parts,
%\begin{align*}I_{11} &=  -\int_{\mathbb{T}} \Lambda^{s}\dg^{3}x(\gamma)  \lambda(\gamma) \Lambda^{s}\dg^{2} x(\gamma)d\gamma -  \int_{\mathbb{T}} \Lambda^{s}\dg^{2}x(\gamma)  \dg\lambda(\gamma) \Lambda^{s}\dg^{2} x(\gamma)d\gamma \\
%	&= I_{11} - \int_{\mathbb{T}} \Lambda^{s}\dg^{2}x(\gamma)  \dg\lambda(\gamma) \Lambda^{s}\dg^{2} x(\gamma)d\gamma
%\end{align*}
%and hence,
%$$|I_{11}| = \frac{1}{2} \Big|\int_{\mathbb{T}} \Lambda^{s}\dg^{2}x(\gamma)  \dg\lambda(\gamma) \Lambda^{s}\dg^{2} x(\gamma)d\gamma\Big| \lesssim \|\Lambda^{s}\dg^{2}x\|_{L^{2}}^{2}\|\dg\lambda\|_{L^{\infty}}.$$
We fix $\mu\in (0, \mez)$ and denote by $\cP(\cdot, \cdot)$  polynomials that depend only on $\mu$ . It follows from the $L^\infty$ bound \eqref{dld:Linfty} for $\p_\g \ld$ that
\bq\label{est:T11}
|T_{11}|\le \cP(\| F(x)\|_{L^\infty}, \| \p_\g x\|_{C^{\mez+\mu}})\| \p_\g^2 x\|_{L^2}\| \p_\g^2 x\|_{H^s}^2.
\eq
The commutator estimate \eqref{cmt:1}  gives
\[
|T_{12}| \leq \|\Lambda^{s}\dg^{2}x\|_{L^{2}}\|[\Lambda^{s}, \lambda(\gamma)] \dg^{3} x(\gamma))\|_{L^{2}} \les \|\Lambda^{s}\dg^{2}x\|_{L^{2}}^{2}\|\dg\lambda\|_{C^{\eps}}
\]
for any $\eps>0$.  A combination \eqref{dld:Linfty} and \eqref{lambdaCepsilon} implies that for any $\eps\in (0, \mu)$, 
\[
\|\dg\lambda\|_{C^{\eps}}\le \cP(\| F(x)\|_{L^\infty}, \| \p_\g x\|_{C^{\mez+\mu}})\| \p_\g^2x\|_{H^\mu},
\]
whence
\bq\label{est:T12}
|T_{12}|\le \cP(\| F(x)\|_{L^\infty}, \| \p_\g x\|_{C^{\mez+\mu}})\| \p_\g^2x\|_{H^\mu}\|\Lambda^{s}\dg^{2}x\|_{L^{2}}^{2}.
\eq
As for $T_2$, we note
$$T_{2} = \int_{\mathbb{T}} \Lambda^{s}\dg^{2}x(\gamma) \cdot\Lambda^{s}(\dg \lambda(\gamma) \dg^{2} x(\gamma))) d\gamma \leq \|\Lambda^{s}\dg^{2}x\|_{L^{2}}\|\Lambda^{s}(\dg \lambda \dg^{2}x)\|_{L^{2}}.$$
Applying \eqref{BonyR}, \eqref{pp:1} and \eqref{pp:2} gives the product estimate
\bq\label{productest:2}
\| uv\|_{H^s}\les \| u\|_{L^\infty}\| v\|_{H^s}+\| u\|_{H^\mez}\| v\|_{B^{s-\mez}_{\infty, \infty}}\quad\forall s \in (0, \mez).
\eq
Using \eqref{productest:2} with $u=\p_\g \ld$, $v=\p_\g^2 x$ and recalling the embedding $H^s\subset B^{s-\mez}_{\infty, \infty}$, we deduce 
\[
|T_2|\les \|\Lambda^{s}\dg^{2}x\|_{L^2}^2\big(\| \p_\g\ld\|_{L^\infty}+\| \p_\g\ld\|_{H^\mez} \big).
\]
Then in view of \eqref{dld:Linfty} and \eqref{est:ld:Sobolev}, we obtain 
\bq\label{est:T2}
|T_2| \le\cP(\| F(x)\|_{L^\infty}, \| \p_\g x\|_{H^{1+\mu}})\|\Lambda^{s}\dg^{2}x\|_{L^{2}}^{2}.
 \eq
Now, since $0=\p_\gamma|\p_\gamma x|^2=2\p_\gamma x\p^2_\gamma x$ we have
\begin{equation}\label{T3:cmtform}
	T_{3} = -\int_{\mathbb{T}}\dg^{2}\lambda(\gamma)\cdot[\Lambda^{2s},  \dg x(\gamma) d\gamma]\dg^{2}x(\gamma) d\gamma.
\end{equation}
In order to control the commutator, we prove
\begin{lemm}
For $s\in (0, \mez)$ we have
\bq\label{cmt:2}
	\| [\Lambda^{2s}, \dg x]\dg^2x\|_{H^\frac{1}{2}}\le C\| \dg x\|_{C^{\mez+s}}\| \dg^{2} x\|_{H^s}.
\eq
\end{lemm}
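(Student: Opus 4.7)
My plan is to exploit the orthogonality $\dg x\cdot \dg^2 x=0$ coming from the parametrization condition \eqref{At}, which collapses the commutator to a genuine product:
\[
[\Lambda^{2s},\dg x]\dg^2 x=\Lambda^{2s}(\dg x\cdot \dg^2 x)-\dg x\cdot \Lambda^{2s}\dg^2 x=-\dg x\cdot \Lambda^{2s}\dg^2 x,
\]
which is the same simplification used in deriving \eqref{T3:cmtform}. Applied to the singular integral representation of $\Lambda^{2s}$, inserting the zero $\dg x(\gamma-\eta)\cdot \dg^2 x(\gamma-\eta)=0$ into the numerator yields the key identity
\[
\dg x(\gamma)\cdot \Lambda^{2s}\dg^2 x(\gamma)=-c\int_\T\frac{[\dg x(\gamma)-\dg x(\gamma-\eta)]\cdot \dg^2 x(\gamma-\eta)}{|\eta|^{1+2s}}\,d\eta,
\]
so that the $|\eta|^{-1-2s}$ singularity is tamed by the H\"older regularity of $\dg x$, effectively reducing the operator order by $\mez+s$.

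Setting $u=\dg x$ and $v=\dg^2 x$, I would first bound $\|u\cdot \Lambda^{2s}v\|_{L^2}$ by the H\"older estimate $|u(\gamma)-u(\gamma-\eta)|\les \|u\|_{C^{\mez+s}}|\eta|^{\mez+s}$ combined with Young's inequality, noting that the residual kernel $|\eta|^{-\mez-s}\in L^1(\T)$ when $s<\mez$. For the homogeneous part, I would use the characterization $\|f\|_{\dot H^{\mez}}^2\approx \int_\T\int_\T |f(\gamma)-f(\gamma-\xi)|^2|\xi|^{-2}\,d\gamma\, d\xi$ and split the finite difference into
\[
A(\gamma,\xi)=-c\int\frac{\delta_\xi\delta_\eta u(\gamma)\cdot v(\gamma-\eta)}{|\eta|^{1+2s}}d\eta,\quad B(\gamma,\xi)=-c\int\frac{\delta_\eta u(\gamma-\xi)\cdot \delta_\xi v(\gamma-\eta)}{|\eta|^{1+2s}}d\eta,
\]
where $\delta_h w(\cdot)=w(\cdot)-w(\cdot-h)$. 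For $A$, the double-difference admits the interpolated bound $|\delta_\xi\delta_\eta u|\les \|u\|_{C^{\mez+s}}|\eta|^{(1-\theta)(\mez+s)}|\xi|^{\theta(\mez+s)}$, $\theta\in[0,1]$. For $B$, I would use $|\delta_\eta u|\les \|u\|_{C^{\mez+s}}|\eta|^{\mez+s}$ together with the difference characterization $\int_\T |\xi|^{-1-2s}\|\delta_\xi v\|_{L^2}^2\, d\xi\approx \|v\|_{\dot H^s}^2$.

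The main obstacle is closing the $\dot H^{\mez}$ estimate within the budget $\|u\|_{C^{\mez+s}}\|v\|_{H^s}$: a naive $L^2$-based Young estimate applied to $A$ produces an empty admissible range for the interpolation parameter $\theta$, so one must use the $H^s$-regularity of $v$ through a Littlewood--Paley decomposition of $v$ in the inner integral, combined with the 1D Sobolev--Besov embedding $H^s\hookrightarrow B^{s-\mez}_{\infty,2}$ (valid for $s<\mez$) to gain the missing low-frequency control. Equivalently, one may decompose $u\cdot \Lambda^{2s}v=T_u\Lambda^{2s}v+T_{\Lambda^{2s}v}u+R(u,\Lambda^{2s}v)$ via Bony's paraproduct and estimate each piece in $H^{\mez}$: the remainder by \eqref{BonyR} with exponents $(\mez+s,-s)$ summing to $\mez$; the low-high paraproduct $T_{\Lambda^{2s}v}u$ by a double summation using $\|S_{q-1}(\Lambda^{2s}v)\|_{L^2}\les 2^{qs}\|v\|_{H^s}$ against $\|\Delta_q u\|_{L^\infty}\les 2^{-q(\mez+s)}\|u\|_{C^{\mez+s}}$; while for the high-low piece $T_u\Lambda^{2s}v$ the orthogonality $u\cdot v=0$ is essential to cancel the otherwise divergent top-order contribution, reducing this piece modulo lower-order terms already handled.
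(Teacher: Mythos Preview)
Your route differs from the paper's in a revealing way. The paper does \emph{not} use the orthogonality $\dg x\cdot\dg^2 x=0$ at all in this lemma; it Bony--decomposes the commutator itself,
\[
[\Lambda^{2s},u]v=[\Lambda^{2s},T_u]v+\Lambda^{2s}(T_vu)+\Lambda^{2s}R(u,v)-T_{\Lambda^{2s}v}u-R(u,\Lambda^{2s}v),
\]
and bounds each piece in $H^{1/2}$. The para--commutator $[\Lambda^{2s},T_u]v$ gains $\mez+s$ derivatives. For the remaining four terms the paper crucially uses the relation $v=\dg u$ to \emph{swap norms}: $\|v\|_{B^{s-1/2}_{\infty,\infty}}\les\|u\|_{C^{1/2+s}}$ and $\|u\|_{H^{1+s}}\les\|v\|_{H^s}$, so that e.g.\ $\|\Lambda^{2s}(T_vu)\|_{H^{1/2}}\les\|v\|_{B^{s-1/2}_{\infty,\infty}}\|u\|_{H^{1+s}}\les\|u\|_{C^{1/2+s}}\|v\|_{H^s}$.

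Your initial reduction to $-u\cdot\Lambda^{2s}v$ via orthogonality is valid, and your singular--integral identity is correct. Your treatment of $R(u,\Lambda^{2s}v)$ and of $T_{\Lambda^{2s}v}u$ (the double summation with Fubini does close, and in fact is a shade sharper than what the paper invokes) is fine. The gap is in the last piece $T_u\Lambda^{2s}v$. You say ``orthogonality is essential'' there, and indeed using $0=\Lambda^{2s}(u\cdot v)$ one rewrites
\[
T_u\Lambda^{2s}v=-[\Lambda^{2s},T_u]v-\Lambda^{2s}(T_vu)-\Lambda^{2s}R(u,v).
\]
But this regenerates exactly the paper's decomposition, and in particular produces the term $\Lambda^{2s}(T_vu)$, which is \emph{not} one of the ``lower--order terms already handled'': it cannot be bounded by $\|u\|_{C^{1/2+s}}\|v\|_{H^s}$ directly (the analogous double summation diverges since $\sum_{q\ge k}2^{2qs}=\infty$). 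What closes it is the swap from $v=\dg u$, not orthogonality. So the mechanism you identify is the wrong one; once you trade $\|u\|_{C^{1/2+s}}$ for $\|u\|_{H^{1+s}}$ and $\|v\|_{H^s}$ for $\|v\|_{B^{s-1/2}_{\infty,\infty}}$ on this piece, the proof finishes and coincides with the paper's.
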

\begin{proof}
Note that $u=\dg x\in H^{1+s}\subset C^{\frac{1}{2}+s}$ and $v=\dg^{2}x\in H^s$. Using the Bony decomposition $uv=T_uv+T_vu+R(u, v)$ we write
\[
[\Lambda^{2s}, u]v=[\Lambda^{2s}, T_u]v+\Lambda^{2s}(T_vu)+\Lambda^{2s}R(u, v)-T_{\Lambda^{2s}h}u-R(u, \Lambda^{2s}v).
\]
By virtue of Lemma \ref{comt:LdT}, 
\[
\| [\Lambda^{2s}, T_u]v\|_{H^\frac{1}{2}}\lesssim \| u\|_{B^{\frac12+s}_{\infty, \infty}}\| v\|_{H^s}.
\]
On the other hand, paraproduct rule \eqref{pp:2}  yields
\begin{align*}
	&\| \Lambda^{2s}(T_vu)\|_{H^\frac{1}{2}}\lesssim \|T_vu\|_{H^{2s+\frac 12}}\lesssim \| v\|_{B^{s-\frac 12}_{\infty, \infty}}\| u\|_{H^{1+s}},\\%\lesssim \| h\|_{H^s}\| g\|_{H^{1+s}},\\
	&\|T_{\Lambda^{2s}v}u\|_{H^\frac{1}{2}}\lesssim \|\Lambda^{2s}v\|_{B^{-s-\frac12}_{\infty, \infty}}\|u\|_{H^{1+s}}\les \| v\|_{B^{s-\frac 12}_{\infty, \infty}}\| u\|_{H^{1+s}}.%\lesssim \|h\|_{B^{s-\frac12}_{\infty, \infty}}\|g\|_{H^{1+s}}\lesssim\|h\|_{H^s}\|g\|_{H^{1+s}}.
\end{align*}
The Bony remainder $R(\cdot, \cdot)$ can be estimated using \eqref{BonyR} as
\begin{align*}
	&\| \Lambda^{2s}R(u, v)\|_{H^\frac12}\lesssim \| R(u, v)\|_{H^{2s+\frac12}}\lesssim  \| v\|_{B^{s-\frac 12}_{\infty, \infty}}\| u\|_{H^{1+s}},\\%\lesssim \| h\|_{H^s}\| g\|_{H^{1+s}},\\
	&\| R(u, \Lambda^{2s}v)\|_{H^\frac12}\lesssim  \|\Lambda^{2s}v\|_{B^{-s-\frac 12}_{\infty, \infty}}\| u\|_{H^{1+s}}\les \|v\|_{B^{s-\frac 12}_{\infty, \infty}}\| u\|_{H^{1+s}}.%\lesssim \| h\|_{H^s}\| g\|_{H^{1+s}}. 
\end{align*}
Finally, Lemma \ref{equi:HZ} implies 
\begin{align*}
&\| u\|_{B^{\frac12+s}_{\infty, \infty}}\simeq \| \p_\g x\|_{C^{\frac12+s}},\quad \|v\|_{B^{s-\frac 12}_{\infty, \infty}}\le \|\p_\g x\|_{B^{s+\frac 12}_{\infty, \infty}}\simeq \|\p_\g x\|_{C^{s+\frac 12}},
\end{align*}
 so that the above estimates lead to \eqref{cmt:2}.
\end{proof}
Using  \eqref{cmt:2} and \eqref{est:ld:Sobolev}, we deduce from \eqref{T3:cmtform} that 
\bq\label{est:T3}
\begin{aligned}
|T_3|&\lesssim \| \dg^2\lambda\|_{H^{-\frac12}} \| \dg x\|_{C^{\mez+s}}\| \dg^{2} x\|_{H^s}\\
& \le \cP(\|F(x)\|_{L^{\infty}}, \| \p_\g x\|_{C^{\mez+\mu}})\| \p_\g^2x\|_{L^2} \| \dg x\|_{C^{\mez+s}}\| \dg^{2} x\|_{H^s}.
\end{aligned}
\eq
From \eqref{est:T11}, \eqref{est:T12}, \eqref{est:T2}, \eqref{est:T3}, we deduce
\begin{prop}\label{prop:Tj}
	For all $s\in (0, \mez)$, there exists a polynomial $\cP(\cdot, \cdot)$ such that 
	\bq\label{est:Tj}
	\sum_{j=1}^3|T_j|\le \cP(\| F(x)\|_{L^\infty}, \| \p_\g^2x\|_{H^s})\| \p_\g^2x\|_{H^s}^2.
	\eq
	\end{prop}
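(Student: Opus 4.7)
The plan is to assemble the four estimates \eqref{est:T11}, \eqref{est:T12}, \eqref{est:T2} and \eqref{est:T3} already derived in this subsection and to reduce every intermediate norm appearing in them to the two quantities $\|F(x)\|_{L^\infty}$ and $\|\p_\g^2 x\|_{H^s}$ permitted by the statement. Since the patch boundary is a closed curve, $x$ is periodic on $\T$ and so $\p_\g x$ has zero mean; Poincar\'e's inequality then gives $\|\p_\g x\|_{L^2}\les\|\p_\g^2 x\|_{L^2}\le\|\p_\g^2 x\|_{H^s}$, which is the only place where the periodicity enters.

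I would next fix some $\mu\in(0,s)$ compatible with the constraint $a(\mez+\mu)<\mu$ of Proposition \ref{prop:ld:Holder} for a suitable $a\in(0,1)$; any sufficiently small $\mu$ works. The one-dimensional Sobolev embedding $H^{1+\mu}(\T)\hookrightarrow C^{\mez+\mu}(\T)$ then yields
$$\|\p_\g x\|_{C^{\mez+\mu}}\les\|\p_\g x\|_{H^{1+\mu}}\les\|\p_\g x\|_{L^2}+\|\p_\g^2 x\|_{H^\mu}\les\|\p_\g^2 x\|_{H^s},$$
and analogously $\|\p_\g x\|_{C^{\mez+s}}\les\|\p_\g^2 x\|_{H^s}$. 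Inserting these into \eqref{est:T11}--\eqref{est:T3} and using $\|\p_\g^2 x\|_{L^2},\|\p_\g^2 x\|_{H^\mu}\le\|\p_\g^2 x\|_{H^s}$ recasts every right-hand side in the form $\cP(\|F(x)\|_{L^\infty},\|\p_\g^2 x\|_{H^s})\|\p_\g^2 x\|_{H^s}^2$. Summing over $T_1=T_{11}+T_{12}$, $T_2$ and $T_3$ and collecting the polynomial factors produces the required $\cP$.

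No new analytic ingredient is needed at this stage; the genuine work was done earlier in deriving \eqref{est:T11}--\eqref{est:T3} and, behind them, the $L^\infty$, H\"older, and $\dot H^{\mez}$ control of $\p_\g\lambda$ in Section \ref{subsec:ld}. The main obstacle in this block was obtaining the Sobolev estimate \eqref{est:ld:Sobolev} for $\p_\g\lambda$ with only $\|\p_\g^2 x\|_{L^2}$ on the right-hand side, since $T_3$ must first be turned into a commutator via the structural identity $\p_\g x\cdot\p_\g^2 x=0$ before one can afford to lose half a derivative on $\lambda$; once \eqref{est:ld:Sobolev} and the accompanying H\"older bound \eqref{lambdaCepsilon} are available, the passage to Proposition \ref{prop:Tj} is routine bookkeeping.
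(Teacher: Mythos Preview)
Your proposal is correct and matches the paper's approach, which consists of the single sentence ``From \eqref{est:T11}, \eqref{est:T12}, \eqref{est:T2}, \eqref{est:T3}, we deduce''; you have simply made explicit the Sobolev embedding $H^{1+\mu}(\T)\hookrightarrow C^{\mez+\mu}(\T)$ and the Poincar\'e inequality for $\p_\g x$ that the paper leaves implicit in that deduction.
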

	Next, for the $L^2$ bound, we multiply \eqref{SQGp} by $x(\g)$, integrate, symmetrize then integrate by parts: 
	\[
	\begin{aligned}
	\mez\frac{d}{dt}\| x\|_{L^2}^2&=\mez \int_\T\int_{\T} \frac{\p_\g x_-(\g, \eta)\cdot x_-(\g, \eta)}{|x_-(\g, \eta)|}d\eta d\g+\int_\T\ld(\g)\p_\g x(\g)\cdot x(\g)d\g\\
	&=-\frac14\int_{\T} \int_\T |x_-(\g, \eta)|^2\p_\g\frac{1}{|x_-(\g, \eta)|}d\eta d\g-\mez \int_\T\p_\g\ld(\g) |x(\g)|^2d\g.
	\end{aligned}
	\]
	Clearly, 
	\[
	\int_\T\int_{\T} |x_-(\g, \eta)|^2\p_\g\frac{1}{|x_-(\g, \eta)|}d\eta d\g =-\int_\T \int_{\T} \p_\g|x_-(\g, \eta)|d\g d\eta =0.
	\]
An application of \eqref{dld:Linfty} gives
\bq\label{est:L2}
\frac{d}{dt}\| x\|_{L^2}^2\le  \cP(\| F(x)\|_{L^\infty}, \| \p_\g^2x\|_{H^s})\| x\|_{L^2}^2.
\eq
Combining Propositions \ref{prop:Nj}, \ref{prop:Tj} and \eqref{est:L2}, we conclude the a priori estimate for the $H^{2+s}$ norm.
\begin{prop}\label{prop:apriorinorm}
For all $s\in (0, \mez)$, there exists a polynomial $\cP(\cdot, \cdot)$ such that 
	\bq\label{est:norm}
	\frac{d}{dt}\| x(t)\|^2_{H^{2+s}}\le \cP(\| F(x)(t)\|_{L^\infty}, \| \p_\g^2x(t)\|_{H^s})\| x(t)\|_{H^{2+s}}^2,\quad t\in [0, T].
	\eq
\end{prop}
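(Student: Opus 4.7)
The plan is to assemble the pieces already in hand. Recall that the evolution identity \eqref{evol:Hs} gives
\[
\mez\frac{d}{dt}\|\Lambda^{s}\p_{\gamma}^{2}x\|_{L^{2}}^{2}=\sum_{j=1}^{3}N_{j}+\sum_{j=1}^{3}T_{j},
\]
and Propositions \ref{prop:Nj} and \ref{prop:Tj} already bound each group of terms by $\cP(\|F(x)\|_{L^{\infty}},\|\p_{\gamma}x\|_{C^{\mez+s}})\|\p_{\gamma}^{2}x\|_{H^{s}}^{2}$ (after choosing $\mu=s$ in \eqref{est:Tj}, or keeping $\mu\in(0,\mez)$ and using monotonicity). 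Adding these two propositions therefore yields
\[
\frac{d}{dt}\|\Lambda^{s}\p_{\gamma}^{2}x\|_{L^{2}}^{2}\le \cP\bigl(\|F(x)\|_{L^{\infty}},\|\p_{\gamma}x\|_{C^{\mez+s}}\bigr)\|\p_{\gamma}^{2}x\|_{H^{s}}^{2}.
\]

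The next step is to absorb the H\"older norm into the Sobolev norm. On the circle $\T$ the Sobolev embedding $H^{1+s}\hookrightarrow C^{\mez+s}$ holds for every $s>0$, so
\[
\|\p_{\gamma}x\|_{C^{\mez+s}}\lesssim \|\p_{\gamma}x\|_{H^{1+s}}\lesssim \|\p_{\gamma}^{2}x\|_{H^{s}}+\|x\|_{L^{2}},
\]
which allows the right-hand side of the preceding display to be rewritten purely in terms of $\|F(x)\|_{L^{\infty}}$ and $\|x\|_{H^{2+s}}$ (the monotonicity of $\cP$ in its second argument is harmless after enlarging the polynomial).

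For the low-frequency component I would invoke \eqref{est:L2}, which already supplies
\[
\frac{d}{dt}\|x\|_{L^{2}}^{2}\le \cP\bigl(\|F(x)\|_{L^{\infty}},\|\p_{\gamma}^{2}x\|_{H^{s}}\bigr)\|x\|_{L^{2}}^{2}.
\]
Adding this inequality to the $\dot H^{s}$ bound for $\p_{\gamma}^{2}x$ and using the equivalence $\|x\|_{H^{2+s}}^{2}\simeq \|x\|_{L^{2}}^{2}+\|\Lambda^{s}\p_{\gamma}^{2}x\|_{L^{2}}^{2}$ (valid on $\T$), one obtains \eqref{est:norm} after enlarging $\cP$ one last time.

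There is essentially no obstacle here: all the analytic work has been done in Sections \ref{sec:nontangentialestimates} and \ref{sec:tangentialestimates}, and the only mild point is the bookkeeping that converts $C^{\mez+s}$ bounds on $\p_{\gamma}x$ into $H^{2+s}$ bounds on $x$ through the one-dimensional Sobolev embedding, together with the equivalence of $\|x\|_{H^{2+s}}^{2}$ with $\|x\|_{L^{2}}^{2}+\|\Lambda^{s}\p_{\gamma}^{2}x\|_{L^{2}}^{2}$. Both facts are standard and produce no new constants depending on $x$, so the final polynomial $\cP$ in \eqref{est:norm} depends only on $s$.
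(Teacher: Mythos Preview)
Your proposal is correct and follows essentially the same route as the paper, which simply says ``Combining Propositions \ref{prop:Nj}, \ref{prop:Tj} and \eqref{est:L2}'' before stating the proposition. One minor point: to obtain the polynomial exactly as stated (depending on $\|\p_\g^2 x\|_{H^s}$ rather than $\|x\|_{H^{2+s}}$), note that since $x$ is periodic one has $\int_\T \p_\g x\,d\g=0$, so in fact $\|\p_\g x\|_{C^{\mez+s}}\lesssim \|\p_\g^2 x\|_{H^s}$ without the extra $\|x\|_{L^2}$ term; also, \eqref{est:Tj} already has $\|\p_\g^2 x\|_{H^s}$ as its second argument, so no choice of $\mu$ is needed there.
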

\section{Propagation of the arc chord condition}\label{sec:arcchord}
	%For $\alpha\in (0, 1)$ it was proved in \cite{GancedoPatel} that the arc chord condition can be propagated assuming only that $x\in L^\infty_t H^2_x$. Precisely, we have
	%\bq\label{archord:GP}
	%\frac{d}{dt}\| F(x)\|_{L^\infty}\le C_\alpha\big(1+\|\p_\g^2x\|_{L^2}\| F(x)\|_{L^\infty}\big)\| F(x)\|_{L^\infty}^{3+\alpha}\|\p_\g^2x\|_{L^2}^2.
	%\eq
	%However, the proof of \eqref{archord:GP} breaks down for the SQG equation $\alpha=1$. This section is devoted to the SQG equation.
	We assume throughout this section  that $x\in C([0, T]; H^3)$ is a solution of \eqref{SQGp}-\eqref{ld}. In order to close the a priori estimates \eqref{est:Nj}, \eqref{est:Tj} and \eqref{est:L2}, it remains to bound the arc chord condition in terms of $\| x\|_{H^{2+s}}$ and itself.  To that end, we differentiate  $F(x)(\g, \eta, t)=\frac{|\eta|}{|x_-(\g, \eta, t)|}$ 
to have
	\bq
	\begin{aligned}
		\p_t F(\g, \eta)&=-|\eta|\frac{x_-(\g, \eta)\cdot \p_t x_-(\g, \eta)}{|x_-(\g, \eta)|^3}\\
		&=-|\eta|\frac{x_-(\g, \eta)\cdot [Q(\g)-Q(\g-\eta)]}{|x_-(\g, \eta)|^3}-|\eta|\frac{x_-(\g, \eta)\cdot \p_\g x(\g) [\ld(\g)-\ld (\g-\eta)]}{|x_-(\g, \eta)|^3}\\
		&\quad-|\eta|\frac{x_-(\g, \eta)\cdot \p_\g x_-(\g, \eta)\ld (\g-\eta)}{|x_-(\g, \eta)|^3}\\
		&=-F_1-F_2-F_3,
	\end{aligned}
	\eq
	where 
	\[
	Q(\g)=\int_{\T} \p_\g x_-(\g, \xi)g(\g, \xi) d\xi,\quad g(\g, \xi)=\frac{1}{|x_-(\gamma, \xi)|}.
	\]
	It is readily seen that 
	\[
	|F_2|\le|\eta|\frac{\|\p_\g x\|_{L^\infty} |\eta|\| \p_\g \ld\|_{L^\infty}}{|x_-(\g, \eta)|^2}\le\|F(x)\|_{L^\infty}^2\|\p_\g x\|_{L^\infty}\| \p_\g \ld\|_{L^\infty},
	\]
	so that invoking \eqref{dld:Linfty} yields
	\bq\label{est:F2}
	|F_2|\le \cP(\|F(x)\|_{L^\infty}, \|\p_\g x\|_{C^{\mez+\mu}})\| \p^2_\g x\|_{L^2}\quad\forall \mu \in (0, \mez).
	\eq
	Recalling the identity \eqref{identity1} for $x_-\cdot \p_\g x_-$ we obtain 
	\bq\label{est:F3:0}
	|F_3|\les |\eta|\| F(x)\|_{L^\infty}^3\frac{|\eta|^2\| \p_\g x\|_{C^\mez}^2\| \ld\|_{L^\infty}}{|\eta|^3}\les \| F(x)\|_{L^\infty}^3\| \p_\g x\|_{C^\mez}^2\| \ld\|_{L^\infty}.
	\eq
	Then, appealing to \eqref{dld:Linfty} again, \eqref{est:F3:0} implies 
	\bq\label{est:F3}
	|F_3|\le\cP(\|F(x)\|_{L^\infty}, \|\p_\g x\|_{C^{\mez+\mu}})\| \p^2_\g x\|_{L^2}\quad\forall \mu \in (0, \mez).
	\eq
	For the most difficult term $F_1$ we estimate
	\[
	|F_1|\le \| F(x)\|_{L^\infty}^4\frac{1}{|\eta|^2}\big(\int_{|\xi|<|\eta|^2}\!+\!\int_{|\xi|>|\eta|^2}\big)\frac{1}{|\xi|}\big|x_-(\g, \eta)\cdot[ \p_\g x_-(\g, \xi)- \p_\g x_-(\g\!-\!\eta, \xi)]\big|d\xi:=F_{11}+F_{12}.
	\]
	By the mean value theorem and Cauchy-Schwarz's inequality, we have
	\[
	\begin{aligned}
		|F_{11}|&\le \| F(x)\|_{L^\infty}^4\frac{1}{|\eta|^2}\int_{|\xi|<|\eta|^2}\frac{1}{|\xi|}|\eta|\| \p_\g x\|_{L^\infty}|\xi|\int_0^1 \big(|\p_\g^2 x(\g- r\xi)|+|\p_\g^2 x(\g-\eta-r\xi)|\big)drd\xi\\
		&\le C\| F(x)\|_{L^\infty}^4\| \p_\g x\|_{L^\infty}\frac{1}{|\eta|}\int_0^1\frac{1}{\sqrt{r}}\Big( \int_{|\xi|<|\eta|^2}1 d\xi\Big)^\mez\Big(\int_{\T} |\p_\g^2 x(\xi)|^2d\xi\Big)^\mez dr\\
		&\le C\| F(x)\|_{L^\infty}^4\| \p_\g x\|_{L^\infty}\|\p_\g^2 x\|_{L^2}.
	\end{aligned}
	\]
	As for $F_{12}$ we split 
	\begin{align*}
		&x_-(\g, \eta)\cdot[ \p_\g x_-(\g, \xi)- \p_\g x_-(\g-\eta, \xi)]\\
		&=[x_-(\g, \eta)-\eta\p_\g x(\g)]\cdot\p_\g x_-(\g, \eta)+\eta\p_\g x(\g)\cdot \p_\g x_-(\g, \eta)\\
		&\quad-[x_-(\g, \eta)-\eta\p_\g x(\g)]\cdot\p_\g x_-(\g- \xi, \eta) -\eta\p_\g x(\g)\cdot\p_\g x_-(\g- \xi, \eta)
	\end{align*}
	and accordingly $F_{12}=\sum_{j=1}^4F_{12 j}$. Clearly,
	\begin{align*}
		|F_{121}|&\le \| F(x)\|_{L^\infty}^4\frac{1}{|\eta|^2}\int_{|\xi|>\eta^2}\frac{1}{|\xi|}|\eta|^{\tdm+s}\| \p_\g x\|_{C^{\mez+s}}|\eta|^{\mez+s}\| \p_\g x\|_{C^{\mez+s}}d\xi\\
		&\le 2\| F(x)\|_{L^\infty}^4\| \p_\g x\|_{C^{\mez+s}}^2|\eta|^{2s}|\ln \pi-\ln |\eta|^2|\\
		&\le C\| F(x)\|_{L^\infty}^4\| \p_\g x\|_{C^{\mez+s}}^2.
	\end{align*}
	In view of identity \eqref{dxdx-} and by the same argument, $F_{122}$ and $F_{123}$ obey the same bound as $F_{121}$. We further split $F_{124}=F_{1241}+F_{1242}$ according to  
	\begin{align*}
		\eta\p_\g x(\g)\cdot\p_\g x_-(\g- \xi, \eta)
		=\eta[\p_\g x(\g)-\p_\g x(\g-\xi)]\cdot \p_\g x_-(\g- \xi, \eta)+\eta \mez |\p_\g x_-(\g- \xi, \eta)|^2,
	\end{align*}
	where we have again appealed to \eqref{dxdx-}. By the mean value theorem and Cauchy-Schwarz's inequality,
	\begin{align*}
		|F_{1241}|&\le C\| F(x)\|_{L^\infty}^4\int_{\T}\frac{\| \p_\g x\|_{C^{\mez+s}}}{|\xi|^{\mez-s}}\int_0^1|\p_\g^2 x(\g-\xi-r\eta)|drd\xi\le C\| F(x)\|_{L^\infty}^4\| \p_\g x\|_{C^{\mez+s}}\| \p_\g^2 x\|_{L^2}.
	\end{align*}
	Finally, we bound 
	\begin{align*}
		|F_{1242}|&\le C\| F(x)\|_{L^\infty}^4\frac{1}{|\eta|}\int_{|\xi|>|\eta|^2}\frac{1}{|\xi|}|\eta|^{1+2s}\|\p_\g x\|_{C^{\mez+s}}^2d\xi\\
		&\le C\| F(x)\|_{L^\infty}^4\|\p_\g x\|_{C^{\mez+s}}^2|\eta|^{2s}|\ln \pi-\ln |\eta|^2|\\
		&\le C\| F(x)\|_{L^\infty}^4\|\p_\g x\|_{C^{\mez+s}}^2.
	\end{align*}
	We have proved 
	\bq
	|F_1|\le C\| F(x)\|_{L^\infty}^4\|\p_\g x\|_{H^{1+s}}^2.
	\eq
	and thus, in view of \eqref{est:F2} and \eqref{est:F3}, we obtain 
	\begin{prop}\label{prop:estF}
	For all $s\in (0, \mez)$, there exists $\cP(\cdot, \cdot)$ such that
	\bq\label{est:F}
	\left|\frac{d}{dt}F(x)(\g, \eta, t)\right|\le \cP(\|F(x)(t)\|_{L^\infty}, \|\p_\g x(t)\|_{C^{\mez+s}})\| \p^2_\g x(t)\|_{H^s},\quad t\in [0, T].
	\eq
	\end{prop}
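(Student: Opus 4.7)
The natural starting point is to differentiate $F(x)(\gamma,\eta,t)=|\eta|/|x_-(\gamma,\eta,t)|$ in time and substitute the contour evolution \eqref{SQGp} for $\partial_t x_-$. Writing $\partial_t x(\gamma) = Q(\gamma) + \lambda(\gamma)\partial_\gamma x(\gamma)$ with $Q(\gamma)=\int_\T \partial_\gamma x_-(\gamma,\xi)g(\gamma,\xi)\,d\xi$ and regrouping, one obtains a decomposition $\partial_t F = -F_1-F_2-F_3$ where $F_1$ comes from the nonlocal piece $Q(\gamma)-Q(\gamma-\eta)$, $F_2$ from the $\lambda$-difference $[\lambda(\gamma)-\lambda(\gamma-\eta)]\partial_\gamma x(\gamma)$, and $F_3$ from the tangent difference $\lambda(\gamma-\eta)\partial_\gamma x_-(\gamma,\eta)$.

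The tangential pieces $F_2$ and $F_3$ should be disposed of directly using the Lipschitz and sup-norm bounds on $\lambda$ given by \eqref{dld:Linfty}. For $F_2$, the mean value theorem on $\lambda$ creates a spare factor $|\eta|$ which kills one power of $|x_-|^{-1}$, so that the ratio is pointwise bounded by $\|F(x)\|_{L^\infty}^2 \|\partial_\gamma x\|_{L^\infty}\|\partial_\gamma\lambda\|_{L^\infty}$. For $F_3$, the key is the algebraic identity \eqref{identity1}: it rewrites $x_-\cdot\partial_\gamma x_-$ as an $O(|\eta|^2)$ quantity so that the three powers of $|\eta|$ in the numerator fully match $|x_-|^3$ in the denominator, leaving only $\|F(x)\|_{L^\infty}^3\|\partial_\gamma x\|_{C^{1/2}}^2\|\lambda\|_{L^\infty}$. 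Since $\lambda(\pm\pi)=0$ gives $\|\lambda\|_{L^\infty}\lesssim\|\partial_\gamma\lambda\|_{L^\infty}$, a further application of \eqref{dld:Linfty} closes both estimates.

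The real obstacle is $F_1$: the kernel $g=1/|x_-|$ inside $Q$ already carries a logarithmic singularity in $\xi$, and we must still recover the factor $|\eta|/|x_-(\gamma,\eta)|^3 \sim |\eta|^{-2}$ coming from the $\partial_t F$ prefactor. My plan is to split the $\xi$-integral at the scale $|\xi|=|\eta|^2$. On the inner window $|\xi|<|\eta|^2$, apply the mean value theorem to $\partial_\gamma x_-(\gamma,\xi)-\partial_\gamma x_-(\gamma-\eta,\xi)$ to extract factors $|\eta|\cdot|\xi|$, then use Cauchy--Schwarz on this short $\xi$-window to produce a bound by $\|F(x)\|_{L^\infty}^4\|\partial_\gamma x\|_{L^\infty}\|\partial_\gamma^2 x\|_{L^2}$. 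On the outer window $|\xi|>|\eta|^2$, decompose $x_-(\gamma,\eta)=[x_-(\gamma,\eta)-\eta\partial_\gamma x(\gamma)]+\eta\partial_\gamma x(\gamma)$ and expand the inner difference against this splitting, producing four subterms. The subterms pairing against the Taylor remainder $x_--\eta\partial_\gamma x$ are handled by direct $C^{1/2+s}$ bounds on $\partial_\gamma x$; the subterms pairing against $\eta\partial_\gamma x(\gamma)$ crucially invoke \eqref{dxdx-} (a consequence of the parametrization $|\partial_\gamma x|^2=A(t)$), which upgrades linear quantities in $\partial_\gamma x_-$ into quadratic ones. This extra $|\xi|^s$ gain leaves only a borderline logarithmic divergence $|\log|\eta||$, absorbed by the remaining $|\eta|^{2s}$ prefactor. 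Combining all the bounds and invoking \eqref{dld:Linfty} once more for $\partial_\gamma\lambda$ yields the announced estimate \eqref{est:F}; I expect the outer window of $F_1$, and specifically the case where both copies of the tangent vector must be resolved via \eqref{dxdx-}, to be the most delicate step.
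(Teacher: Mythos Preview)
Your proposal is correct and follows essentially the same route as the paper: the same decomposition $\partial_t F=-F_1-F_2-F_3$, the same treatment of $F_2,F_3$ via \eqref{dld:Linfty} and \eqref{identity1}, and for $F_1$ the same scale splitting of the $\xi$-integral at $|\xi|=|\eta|^2$ together with the Taylor expansion of $x_-(\gamma,\eta)$ and repeated use of \eqref{dxdx-} on the outer window. The only imprecision is that the ``extra gain'' you mention after applying \eqref{dxdx-} is primarily a power of $|\eta|$ (from $|\partial_\gamma x_-(\cdot,\eta)|^2\lesssim|\eta|^{1+2s}$) rather than of $|\xi|$; the $|\xi|$-gain appears only in the cross term $[\partial_\gamma x(\gamma)-\partial_\gamma x(\gamma-\xi)]\cdot\partial_\gamma x_-(\gamma-\xi,\eta)$, which the paper handles separately by Cauchy--Schwarz rather than by the logarithm argument.
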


	\section{Uniqueness}\label{sec:uniqueness}
	This section is devoted to the proof of the following stability result which implies the uniqueness of $H^{2+s}$ solutions. 
	\begin{theo}\label{theo:stability}
	Suppose that  $x$ and $y$ are two solutions of \eqref{SQGp}-\eqref{ld} in $L^\infty([0,T]; H^{2+s}(\T))$ and satisfy the arc chord condition. Then we have 
	\bq\label{stability}
	\|x(t)-y(t)\|^2_{H^1}+\big||\dg x(t)|-|\dg y(t)|\big|^2\le e^{Ct}\left(\|x(0)-y(0)\|^2_{H^1}+\big||\dg x(0)|-|\dg y(0)|\big|^2\right),
	\eq
	%\bq
	%\frac{d}{dt}\big(\|x-y\|^2_{H^1}+\big||\dg x|-|\dg y|\big|^2\big)\leq  C\left(\big||\dg x|-|\dg y|\big|\|z\|_{H^1}+\|z\|_{H^1}^2\right),
	%\eq
	for any $t\in [0, T]$, where $C$ depends only on %$C$  denotes a constant, which may be different from one line to the next one,  depending only on 
$\| (x, y)\|_{L^\infty([0, T]; H^{2+s}(\T))}$ and  $\| (F(x), F(y))\|_{L^\infty(\T\times \T\times [0, T])}$. Consequently, $x\equiv y$ if $x(0)=y(0)$. 
	\end{theo}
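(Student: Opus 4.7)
Let $z = x - y$. I will derive a Gr\"onwall inequality for the stability functional
$$E(t) := \|z(t)\|_{H^1}^2 + \big||\dg x(t)| - |\dg y(t)|\big|^2,$$
tracking both the $H^1$ distance and the difference of the (constant in $\g$) tangent moduli. Once $E'(t) \le CE(t)$ is established with $C$ depending only on the $L^\infty([0,T];H^{2+s})$ and arc chord norms of $x,y$, Gr\"onwall yields \eqref{stability} and hence uniqueness (with $E(0)=0$ forcing $x\equiv y$).

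\textbf{Energy estimates and splitting.} Subtracting \eqref{SQGp} for $y$ from that for $x$ gives
$$\p_t z = \int_\T \Big(\frac{\dg x_-}{|x_-|} - \frac{\dg y_-}{|y_-|}\Big)\, d\eta + \ld^{(x)}\dg z + (\ld^{(x)}-\ld^{(y)})\dg y,$$
where $\ld^{(x)},\ld^{(y)}$ denote the multipliers associated to $x,y$ via \eqref{ld}. The $L^2$ estimate follows by symmetrizing the integral and bounding $\ld^{(x)}-\ld^{(y)}$ in $L^\infty$. For the $\dg z$ estimate I differentiate, multiply by $\dg z$ and integrate; the nontangential contribution splits via
$$\frac{\dg x_-}{|x_-|} - \frac{\dg y_-}{|y_-|} = \frac{\dg z_-}{|x_-|} + \dg y_-\Big(\frac{1}{|x_-|}-\frac{1}{|y_-|}\Big).$$
The first summand is handled by the same techniques as $N_1,N_2,N_3$ of Section~\ref{sec:nontangentialestimates} (with $z_-$ in place of one factor of $x_-$); the second produces the dangerous mixed term $I_{312}$ flagged in the introduction, in which $\dg^2 y$ appears without usable H\"older control when $s<\mez$.

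\textbf{The key Taylor expansion.} The idea that bypasses $C^2$ regularity is to exploit $|\dg x|^2\equiv A(t)$, $|\dg y|^2\equiv B(t)$ and the consequent orthogonality $\dg x\cdot\dg^2 x = 0 = \dg y\cdot\dg^2 y$. Taylor-expanding for small $\eta$ gives
$$|x_-(\g,\eta)|^2 = \eta^2 A(t) + \eta^4\rho_x(\g,\eta),\qquad |y_-(\g,\eta)|^2 = \eta^2 B(t) + \eta^4\rho_y(\g,\eta),$$
the $\eta^3$ coefficient vanishing by orthogonality. Hence the worst $\eta$-singular part of $\tfrac{1}{|x_-|}-\tfrac{1}{|y_-|}$ factorizes as
$$\frac{1}{|x_-|}-\frac{1}{|y_-|} = -\frac{|\dg x|-|\dg y|}{|\eta|\sqrt{AB}} + \mathcal{R}(\g,\eta),$$
where the prefactor $\big||\dg x|-|\dg y|\big|$ is my auxiliary tracked quantity ($A,B$ being $\g$-independent scalars that pull out of any $\g$-integral) and $\mathcal{R}$ is strictly less singular in $\eta$ and depends on $z$ through $z_-,\dg z_-$. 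Plugged into $I_{312}$, the leading piece reduces to $\big||\dg x|-|\dg y|\big|$ times a bilinear pairing of $\dg z$ with $\dg^2 y$ closed using $\dg^2 y\in H^s$ (here $s>0$ is essential); the remainder produces expressions quadratic in $(z,\dg z)$, closed via $|z_-(\g,\eta)|\les |\eta|^{1/2}\|z\|_{H^1}$ and the improved $\eta$-decay of $\mathcal{R}$. The same expansion governs the tangential contributions involving $\ld^{(x)}-\ld^{(y)}$, whose $L^\infty$ and $\dot H^{1/2}$ norms are estimated by $CE^{1/2}$ through a term-by-term analysis of \eqref{dgammalambda}--\eqref{dglambda3} applied to differences.

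\textbf{Modulus ODE and Gr\"onwall.} To close the system I derive the scalar ODE for $A(t)-B(t)$ by differentiating $|\dg x|^2 = A(t)$ in $t$ at any chosen $\g$ (the right-hand side is $\g$-independent) and subtracting the analogue for $y$; the same decompositions bound $|(A-B)'(t)|$ by $CE(t)$. Assembling the $L^2$, $\dg z$ and modulus inequalities gives $E'(t) \le CE(t)$, and Gr\"onwall yields \eqref{stability}. The \emph{main obstacle} is the decomposition of $\tfrac{1}{|x_-|}-\tfrac{1}{|y_-|}$ that isolates the $|\dg x|-|\dg y|$ factor from the $\eta$-singularity; it is precisely why the tangent modulus must be tracked as a second unknown in $E(t)$ and the reason for the unusual form of the stability functional.
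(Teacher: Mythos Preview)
Your strategy is exactly the paper's: track $E(t)=\|z\|_{H^1}^2+\big||\dg x|-|\dg y|\big|^2$, isolate in $I_{312}$ the $|\eta|^{-1}$-singular piece of $\tfrac{1}{|x_-|}-\tfrac{1}{|y_-|}$ as a multiple of $||\dg x|-|\dg y||$ (the paper's $J_3$), and close the remainder and the modulus ODE in $\|z\|_{H^1}$. Two technical points need correction. First, your Taylor claim $|x_-|^2=\eta^2 A+\eta^4\rho_x$ with $\rho_x$ bounded is too strong at regularity $H^{2+s}$, $s<\tfrac12$ (you only get $O(|\eta|^{3+2s})$, since $\dg^2 x\notin L^\infty$); the paper instead writes the algebraic identity
\[
\frac{1}{|x_-|}-\frac{1}{|\dg x||\eta|}=\frac{2(\dg x'\eta-x_-)\cdot\dg x'\eta-|\dg x'\eta-x_-|^2}{|x_-||\dg x||\eta|\big(|x_-|+|\dg x||\eta|\big)}
\]
and uses $\dg x\cdot\dg^2 x=0$ once more (after a mean-value step) to turn the first numerator term into the \emph{square} $|\dg x'-\dg x(\g-r\eta)|^2\eta^2$; it is this squared structure that makes the remainder $\mathcal R_x-\mathcal R_y$ genuinely linear in $\dg z$ upon differencing (the paper's $J_{11j}$, $J_{211}$), which your sketch asserts but does not exhibit. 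Second, you do not need an $\dot H^{1/2}$ bound on $\dg(\ld^{(x)}-\ld^{(y)})$: for the tangential term $I_{44}$ the paper integrates by parts and uses $\dg^2 z\cdot\dg y=-\dg^2 x\cdot\dg z$, so only $\|\ld^{(x)}-\ld^{(y)}\|_{L^\infty}\le C\|z\|_{H^1}$ is required---itself a substantial estimate (the $G_1,G_2$ analysis) that you have waved through.
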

	As before we shall write $f=f(\g,t)$, $f'=f(\g-\e,t)$, $f_-=f-f'$ and $\int=\int_{\T}$ when there is no danger of confusion in writing  double integrals with respect to the variables $\g$ and $\eta$. Recall from \eqref{dld:Linfty}  that 
	\bq\label{ld:C1}
	\| (\ld(x), \ld(y))\|_{L^\infty([0, T]; W^{1, \infty}(\T))}\le C.
	\eq
	%\[
	%\sup_{[0,T]}\|x\|_{H^{2+s}}(t), \sup_{[0,T]}\|y\|_{H^{2+s}}(t), \sup_{[0,T]}\|F(x)\|_{L^{\infty}}(t), \sup_{[0,T]}\|F(y)\|_{L^{\infty}}(t).
	%\]
	Set $z=x-y$.
	\subsection{$L^2$ estimates}
	We have
	$$
	\frac{1}{2}\frac{d}{dt}\|z\|^2_{L^2}=\int z\cdot \p_t z d\g= I_1+I_2,
	$$
	where
	$$
	I_1=\int z\cdot \int \Big(\frac{\dg x_-}{|x_-|}-\frac{\dg y_-}{|y_-|} \Big)d\e d\g,\quad
	I_2=\int z\cdot (\lambda(x)\dg x-\lambda(y)\dg y) d\g.
	$$
	We split $I_1=I_{11}+I_{12}$, where
	$$
	I_{11}=\int z\cdot \int \frac{\dg z_-}{|x_-|}d\e d\g,\quad
	I_{12}=\int z\cdot \int \dg y_-\Big(\frac{1}{|x_-|}-\frac{1}{|y_-|} \Big)d\e d\g.
	$$
%	Then with an adequate change of variables, we obtain
%	$$
%	I_{1,1}=\int\int\frac{z(\g)\cdot(\dg z(\g)-\dg z(\e))}{|x(\g)-x(\e)|}d\e d\g=-\int\int\frac{z(\e)\cdot(\dg z(\g)-\dg z(\e))}{|x(\g)-x(\e)|}d\e d\g
%	$$
%	thus
%	$$
%	I_{1,1}=\frac12\int\int\frac{(z(\g)-z(\e))\cdot(\dg z(\g)-\dg z(\e))}{|x(\g)-x(\e)|}d\e d\g=
%	\frac12\int\int\frac{z_-\cdot \dg z_-}{|x_-|} d\g d\e.
%	$$
	After symmetrizing, an integration by parts gives 
	$$
	I_{11}=\frac12\int\int\frac{z_-\cdot \dg z_-}{|x_-|} d\g d\e=\frac{1}{4}\int\int|z_-|^2\frac{x_-\cdot\dg x_-}{|x_-|^3} d\g d\e.
	$$
	The identity
	\begin{equation}\label{tvm} 
		f_-=\e \int_0^1\dg f(\g-r\e)dr
	\end{equation}
	together with H\"older's inequality (in $\g$) allows us to get the bound
	\begin{equation*}\label{I11}
		I_{11}\leq \|F(x)\|^2_{L^\infty}\|\dg x\|_{C^{\frac12}}\int_0^1dr\int\frac{d\e}{|\e|^{\mez}}\int d\g (|z|+|z'|)|\dg z(\g-r\e)|  \leq C\|z\|^2_{H^1}.
	\end{equation*}
	For $I_{12}$ one writes
	$$
	I_{12}=\int z\cdot \int \dg y_- \frac{|y_-|-|x_-|}{|x_-||y_-|}d\e d\g\leq \int\int \frac{|z||\dg y_-||z_-|}{|x_-||y_-|}d\e d\g,
	$$
	so that arguing as for $I_{12}$, we obtain
	$$
	I_{12}\leq \|F(x)\|_{L^\infty}\|F(y)\|_{L^\infty}\|\dg y\|_{C^{\frac12}}\int_0^1dr\int\frac{d\e}{|\e|^{\mez}}\int d\g |z| |\dg z(\g-r\e)|\leq C \|z\|^2_{H^1}.
	$$
	Thus,
	\begin{equation}\label{I1}
		I_{1}\leq C\|z\|^2_{H^1}.
	\end{equation}
	Next,   $I_2=I_{21}+I_{22}$, where
	$$
	I_{21}=\int z\cdot \dg z \lambda(x) d\g,\quad I_{22}=\int z\cdot\dg y  (\lambda(x)-\lambda(y)) d\g.
	$$
	It follows at once from \eqref{ld:C1} that $I_{21}%\leq \|z\|_{L^2}\|\dg z\|_{L^2}\|\lambda(x)\|_{L^\infty}
	\leq C \|z\|^2_{H^1}$. Similarly, $ I_{21}
	\leq C \|z\|^2_{H^1}$, provided that 
	%$$
	%I_{2,2}\leq \|z\|_{L^2}\|\dg y\|_{L^2}\|\lambda(x)-\lambda(y)\|_{L^\infty}\leq C \|z\|_{L^2}\|\lambda(x)-\lambda(y)\|_{L^\infty}.
	%$$
	%We claim that 
	\begin{equation}\label{boundsLambdaLinfty}
		\|\lambda(x)-\lambda(y)\|_{L^\infty}\leq C\|z\|_{H^1}.
	\end{equation}
	Taking this for granted, we obtain $I_{2}\leq C\|z\|^2_{H^1}$ which in conjunction with \eqref{I1} yields  the differential inequality
	\begin{equation}\label{L2evolution}
		\frac{d}{dt}\|z\|^2_{L^2}\leq C\|z\|^2_{H^1}.
	\end{equation}
	
	The remainder of this subsection is devoted to the proof  of \eqref{boundsLambdaLinfty}. Let us write
	\begin{equation}\label{SLambda}
		\lambda(x)-\lambda(y)=G_1+G_2,
	\end{equation} where
	$$
	G_1=\frac{\g+\pi}{2\pi}\int\Big[\frac{\dg x}{|\dg x|^2}\cdot \dg \Big(\int
	\frac{\dg x_-}{|x_-|}d\e \Big)-\frac{\dg y}{|\dg y|^2}\cdot \dg \Big(\int
	\frac{\dg y_-}{|y_-|}d\e \Big)\Big] d\g,
	$$
	and
	\begin{align*}
		G_2=&-\int_{-\pi}^\g \frac{\dg x(\e,t)}{|\dg x(\e,t)|^2}\cdot \de \Big(\int\frac{\dg
			x(\e,t)-\dg x(\e-\xi,t)}{|x(\e,t)-x(\e-\xi,t)|}d\xi \Big)d\e\\
		&+\int_{-\pi}^\g \frac{\dg y(\e,t)}{|\dg y(\e,t)|^2}\cdot \de \Big(\int\frac{\dg
			y(\e,t)-\dg y(\e-\xi,t)}{|y(\e,t)-y(\e-\xi,t)|}d\xi \Big)d\e.
	\end{align*}
	We shall prove that both $|G_1|$ and $|G_2|$ are bounded by $C\| z\|_{H^1}$. To that end,  we decompose further $G_1=G_{11}+G_{12}+G_{13}$, where
	\begin{align*}
	&G_{11}=\frac{\g+\pi}{2\pi}\int \frac{\dg z}{|\dg x|^2}\cdot \dg \Big(\int
	\frac{\dg x_-}{|x_-|}d\e \Big)d\g,\\
	&G_{12}=\frac{\g+\pi}{2\pi}\int \big(\frac{1}{|\dg x|^2}-\frac{1}{|\dg y|^2}\big)\dg y\cdot \dg \Big(\int
	\frac{\dg x_-}{|x_-|}d\e \Big)d\g,\\
	&	G_{13}=\frac{\g+\pi}{2\pi}\int \frac{\dg y}{|\dg y|^2}\cdot \dg \int\Big(
	\frac{\dg x_-}{|x_-|}-\frac{\dg y_-}{|y_-|}\Big)d\e d\g.
	\end{align*}
	A further splitting provides $G_{11}=G_{111}+G_{112}$ where
	\begin{align*}
	&G_{111}=-\frac{\g+\pi}{2\pi}\int \frac{\dg z}{|\dg x|^2}\cdot \int \dg x_-
	\frac{x_-\cdot\dg x_-}{|x_-|^3}d\e d\g,\\
	&G_{112}=\frac{\g+\pi}{2\pi}\int \frac{\dg z}{|\dg x|^2}\cdot \int
	\frac{\dg^2 x_-}{|x_-|}d\e d\g.
	\end{align*}
	The less singular term can be estimated analogously to $I_{11}$:
	$$|G_{111}|\leq \|F(x)\|^4_{L^\infty}\|\dg x\|_{C^{\frac12}}
	\int_0^1dr\int\frac{d\e}{|\e|^{\mez}}\int d\g |\dg z| |\dg^2 x(\g-r\e)|\leq C\|z\|_{H^1}.$$
	In $G_{112}$ we use  Cauchy-Schwarz's inequality as follows:
	\bq\label{G112}
	\begin{aligned}
	G_{112}&\leq\|F(x)\|^3_{L^\infty}\int\int {d\e}d\g  \frac{|\dg z(\g)|}{|\eta|^{\mez-s}}\frac{|\dg^2 x_-|}{|\eta|^{\mez+s}}\\
	&\les \|F(x)\|^3_{L^\infty}\| \p_\g^2x\|_{\dot H^s}\| \frac{|\dg z(\g)|}{|\eta|^{\mez-s}}\|_{L^2_{\g, \e}}\le C\| \p_\g z\|_{L^2}.
	\end{aligned}
	\eq
	%\begin{align*}
	%	G_{1,1,2}&\leq\|F(x)\|^3_{L^\infty}\int\frac{d\e}{|\e|}\int d\g |\dg z||\dg^2 x_-|\leq C\int\frac{d\e}{|\e|} \|\dg z\|_{L^2}\Big(\int d\g|\dg^2 x_-|^2\Big)^{\frac12}\\
	%	&\leq C\|z\|_{H^1}\Big(\int \frac{d\e}{|\e|^{1-2s}}\Big)^{\frac12}\Big(\int \frac{d\e}{|\e|^{1+2s}}\int d\g|\dg^2 x_-|^2\Big)^{\frac12}\leq C\|z\|_{H^1}\|x\|_{H^{2+s}},
	%\end{align*}
	%and therefore
	%\begin{equation}\label{G11}
%		\|G_{1,1}\|_{L^\infty}\leq C\|z\|_{H^1}.
%	\end{equation}
	Noticing 
	$$
	|G_{12}|\leq 2\|F(x)\|^2_{L^\infty}\int |\dg z|\Big|\dg \Big(\int
	\frac{\dg x_-}{|x_-|}d\e\Big)\Big|d\g,
	$$
	we  see that $G_{12}$ can be controlled analogously to $G_{11}$. %to obtain the desired control
	%\begin{equation}\label{G12}
	%	\|G_{1,2}\|_{L^\infty}\leq C\|z\|_{H^1}.
	%\end{equation}
	Regarding  $G_{13}$ we first integrate by parts using $\p_\g y\cdot \p_\g^2 y=0$ to obtain
	$$ 
	G_{13}=-\frac{\g+\pi}{2\pi}\int \frac{\dg^2 y}{|\dg y|^2}\cdot \int\Big(
	\frac{\dg x_-}{|x_-|}-\frac{\dg y_-}{|y_-|}\Big)d\e d\g=G_{131}+G_{132},
	$$ 
	where 
	\begin{align*}
	&G_{131}=-\frac{\g+\pi}{2\pi}\int \frac{\dg^2 y}{|\dg y|^2}\cdot \int
	\dg x_-\Big(\frac{1}{|x_-|}-\frac{1}{|y_-|}\Big)d\e d\g,\\
	&G_{132}=-\frac{\g+\pi}{2\pi}\int \frac{\dg^2 y}{|\dg y|^2}\cdot \int
	\frac{\dg z_-}{|y_-|} d\e d\g.
	\end{align*}
	$G_{131}$ can be controlled as in $I_{11}$:
	$$
	|G_{131}|\leq  \|F(y)\|^3_{L^\infty}\|F(x)\|_{L^\infty}\|\dg x\|_{C^{\frac12}}\int_0^1dr\int \frac{d\e}{|\e|^{\mez}}\int d\g |\dg^2 y||\dg z(\g-r\e)|\leq C\|z\|_{H^1}.
	$$
	%The term $G_{1,3,2}$ can be symmetrize as before to rewrite it as follows:
	After symmetrizing, 
	$$
	G_{132}=-\frac{\g+\pi}{4\pi}\int\int \frac{\dg^2 y_-}{|\dg y|^2}\cdot 
	\frac{\dg z_-}{|y_-|} d\e d\g,
	$$
	so that it can be controlled as in $G_{112}$. %We control it similarly to $G_{1,1,2}$:
	%\begin{align*}
	%	G_{1,3,2}&\leq C\int\frac{d\e}{|\e|}\int d\g |\dg^2 y_-|(|\dg z|+|\dg z'|)\leq C\int\frac{d\e}{|\e|} \Big(\int d\g|\dg^2 y_-|^2\Big)^{\frac12}2\|\dg z\|_{L^2}\\
	%	&\leq C\|y\|_{H^{2+s}}\|z\|_{H^1}\leq C\|z\|_{H^1}.
	%\end{align*}
	%It yields the desired bound:
	%\begin{equation}\label{G13}
	%	\|G_{1,3}\|_{L^\infty}\leq C\|z\|_{H^1}.
	%\end{equation}
	%Gathering (\ref{G11},\ref{G12},\ref{G13}) we are done with $G_1$:
	%\begin{equation}\label{G1}
	%	\|G_{1}\|_{L^\infty}\leq C\|z\|_{H^1}.
	%\end{equation}
	
	Next we turn to $G_2$. To stay with the integration variables $\g$ and $\e$, we change $G_2(\g)$ to $G_2(\xi)$: 
	\begin{align*}
		G_2(\xi)=&-\int_{-\pi}^\xi \frac{\dg x}{|\dg x|^2}\cdot \dg \Big(\int\frac{\dg
			x_-}{|x_-|}d\e \Big)d\g+\int_{-\pi}^\xi \frac{\dg y}{|\dg y|^2}\cdot \dg \Big(\int\frac{\dg
			y_-}{|y_-|}d\e \Big)d\g=G_{21}+G_{22},
	\end{align*}
	%In order to get \eqref{boundsLambdaLinfty} we have to show that $\|G_2\|_{L^\infty}\leq C\|z\|_{H^1}$. We decompose this term further 
	%\begin{equation}\label{SplittingG2}
	%	G_{2}=G_{2,1}+G_{2,2}
	%\end{equation} where
	where
	\begin{align*}
		&G_{21}=\int_{-\pi}^\xi \frac{\dg x}{|\dg x|^2}\cdot \int\dg
		x_-\frac{x_-\cdot\dg
			x_-}{|x_-|^3}d\e d\g-\int_{-\pi}^\xi \frac{\dg y}{|\dg y|^2}\cdot \int\dg
		y_-\frac{y_-\cdot\dg
			y_-}{|y_-|^3}d\e d\g,\\
		&G_{22}=-\int_{-\pi}^\xi \frac{\dg x}{|\dg x|^2}\cdot\int\frac{\dg^2
			x_-}{|x_-|}d\e d\g+\int_{-\pi}^\xi \frac{\dg y}{|\dg y|^2}\cdot \int\frac{\dg^2
			y_-}{|y_-|}d\e d\g.
	\end{align*}
	Recalling the identity \eqref{identity1}
	\begin{equation}\label{extraGP}
		x_-\cdot\dg x_-=(x_--\dg x\e)\cdot\dg x_-+\frac\e2|\dg x_-|^2,  
	\end{equation}
	we split $G_{21}=\sum_{j=1}^8G_{21j}$, where
	$$
	G_{211}=\int_{-\pi}^\xi \frac{\dg z}{|\dg x|^2}\cdot \int\dg
	x_-\frac{x_-\cdot\dg
		x_-}{|x_-|^3}d\e d\g,
	$$
	$$
	G_{212}=\int_{-\pi}^\xi \Big(\frac{1}{|\dg x|^2}-\frac{1}{|\dg y|^2}\Big)\dg y\cdot \int\dg
	x_-\frac{x_-\cdot\dg
		x_-}{|x_-|^3}d\e d\g,
	$$
	$$
	G_{213}=\int_{-\pi}^\xi \frac{\dg y}{|\dg y|^2}\cdot \int\dg
	z_-\frac{x_-\cdot\dg
		x_-}{|x_-|^3}d\e d\g,
	$$
	$$
	G_{214}=\int_{-\pi}^\xi \frac{\dg y}{|\dg y|^2}\cdot \int\dg
	y_-\Big(\frac{1}{|x_-|^3}-\frac{1}{|y_-|^3}\Big)x_-\cdot\dg
	x_-d\e d\g,
	$$
	$$
	G_{215}=\int_{-\pi}^\xi \frac{\dg y}{|\dg y|^2}\cdot \int\frac{\dg
		y_-}{|y_-|^3}(z_- -\dg z\e)\cdot\dg
	x_-d\e d\g,
	$$
	$$
	G_{216}=\int_{-\pi}^\xi \frac{\dg y}{|\dg y|^2}\cdot \int\frac{\dg
		y_-}{|y_-|^3}(y_- -\dg y\e)\cdot\dg
	z_-d\e d\g,
	$$
	$$
	G_{217}=\int_{-\pi}^\xi \frac{\dg y}{|\dg y|^2}\cdot \int\frac{\dg
		y_-}{|y_-|^3}\frac\e2\dg z_-\cdot\dg
	x_-d\e d\g,
	$$
	$$
	G_{218}=\int_{-\pi}^\xi \frac{\dg y}{|\dg y|^2}\cdot \int\frac{\dg
		y_-}{|y_-|^3}\frac\e2\dg y_-\cdot\dg
	z_-d\e d\g.
	$$
	Using \eqref{tvm} we  bound the first two as follows:
	$$
	|G_{211}|+|G_{212}|\leq C\|\dg x\|_{C^{\frac12}}\int_0^1dr\int \frac{d\e}{|\e|^{\mez}}\int d\g |\dg z||\dg^2 x(\g-r\e)|\leq C\|z\|_{H^1}.
	$$
	Combining \eqref{tvm} and \eqref{extraGP} yields
	$$
	|G_{2,3}|\leq C\|\dg x\|_{C^{\frac12}}\int_0^1dr\int \frac{d\e}{|\e|^{\mez}}\int d\g (|\dg z|+|\dg z'|)|\dg^2 x(\g-r\e)|\leq C\|z\|_{H^1}.
	$$
	Using \eqref{tvm} twice allows us to get
	$$
	|G_{214}|\leq C\|\dg y\|_{C^{\frac12}}\int_0^1dr\int_0^1d\tilde{r}\int \frac{d\e}{|\e|^{\mez}}\int d\g |\dg z(\g-r\e)||\dg^2 x(\g-\tilde{r}\e)|\leq C\|z\|_{H^1}.
	$$
	Similarly in $G_{215}$, we obtain
	$$
	|G_{215}|\leq C\|\dg y\|_{C^{\frac12}}\int_0^1\!\!\!dr\int_0^1\!\!\!d\tilde{r}\int \frac{d\e}{|\e|^{\mez}}\int d\g (|\dg z(\g\!-\!r\e)|\!+\!|\dg z|)|\dg^2 x(\g\!-\!\tilde{r}\e)|\leq C\|z\|_{H^1}.
	$$
	The next term is controlled as 
	$$
	|G_{216}|\leq C\|\dg y\|_{C^{\frac12}}\int_0^1dr\int \frac{d\e}{|\e|^{\mez}}\int d\g |\dg^2 y(\g-r\e)| (|\dg z|+|\dg z'|)\leq C\|z\|_{H^1}.
	$$ 
	The last two are estimated analogously: 
	\begin{align*}
		|G_{217}|+|G_{218}|&\leq C(\|\dg x\|_{C^{\frac12}}\!+\!\|\dg y\|_{C^{\frac12}})\int_0^1\!\!\!dr\!\int\! \frac{d\e}{|\e|^{\mez}}\int\! d\g |\dg^2 y(\g\!-\!r\e)| (|\dg z|\!+\!|\dg z'|)\\
		&\leq C\|z\|_{H^1}.
	\end{align*}
	%All these bounds provide the appropriate inequality for $G_{2,1}$:
	%\begin{equation}\label{G21}
	%	\|G_{2,1}\|_{L^\infty}\leq C\|z\|_{H^1}.
	%\end{equation}
	We are left with the most singular term $G_{22}$. The identities
	$$
	\dg x\cdot\dg^2x_-=-\dg x\cdot\dg^2x'=-\dg x_-\cdot\dg^2x'
	$$
	yields
	\begin{align*}
		G_{22}=&\int_{-\pi}^\xi\int\frac{\dg x_-\cdot\dg^2
			x'}{|\dg x|^2|x_-|}d\e d\g-\int_{-\pi}^\xi\int
		\frac{\dg y_-\cdot\dg^2 y'}{|\dg y|^2|y_-|}
		d\e d\g.
	\end{align*}
	Hence, we can decompose  $G_{22}=\sum_{j=1}^4 G_{22j}$, where
	$$
	G_{221}=\int_{-\pi}^\xi\int\frac{\dg z_-\cdot\dg^2
		x'}{|\dg x|^2|x_-|}d\e d\g,\quad G_{222}=\int_{-\pi}^\xi\int\frac{\dg y_-\cdot\dg^2
		z'}{|\dg x|^2|x_-|}d\e d\g,
	$$
	$$
	G_{223}=\int_{-\pi}^\xi\int\frac{\dg y_-\cdot\dg^2
		y'}{|x_-|}\Big(\frac1{|\dg x|^2}-\frac1{|\dg y|^2}\Big)d\e d\g,
	$$
	$$
	G_{224}=\int_{-\pi}^\xi\int\frac{\dg y_-\cdot\dg^2
		y'}{|\dg y|^2}\Big(\frac1{|x_-|}-\frac1{|y_-|}\Big)d\e d\g.
	$$
	The more singular $G_{221}$ can be rewritten as 
	\begin{align*}
	G_{221}&=\int_{-\pi}^\xi\Big[\dg\Big(\int\frac{z_-\cdot\dg^2
		x'}{|\dg x|^2|x_-|}d\e\Big)-\int z_-\cdot\dg\Big(\frac{\dg^2
		x'}{|\dg x|^2|x_-|}\Big)d\e\Big] d\g\\
&=G_{221}^1+G_{221}^2+G_{221}^3,
\end{align*} where
	$$
	G_{221}^1=\Big(\int\frac{z_-\cdot\dg^2
		x'}{|\dg x|^2|x_-|}d\e\Big)\Big|_{\gamma=-\pi}^{\gamma=\xi},\quad G_{221}^2=-\int_{-\pi}^\xi\int z_-\cdot\frac{\dg^3
		x'}{|\dg x|^2|x_-|}d\e d\g,
	$$
	and 
	$$
	G_{221}^3=\int_{-\pi}^\xi\int z_-\cdot\frac{\dg^2
		x'}{|\dg x|^2}\frac{x_-\cdot\dg x_-}{|x_-|^3}d\e d\g.
	$$
	Identity \eqref{tvm} together with integration in the variable $\eta$ provides 
	\begin{align*}
		|G_{221}^1|\leq&2\|F(x)\|^3_{L^\infty}\int_0^1\frac{dr}{r^{\mez}}\|\dg z\|_{L^{2}}\|\dg^2
		x\|_{L^2}\leq  C\|z\|_{H^1}.
	\end{align*}
	Concerning $G_{221}^2$, the fact that
	$
	\dg^3 x'=\de\dg^2 x_-
	$ and integration by parts in $\eta$ yield
	$$
	G_{221}^2=-\int_{-\pi}^\xi\int z_-\cdot\dg^2 x_-\frac{ x_-\cdot\dg x' }{|\dg x|^2|x_-|^3}d\e d\g
	+\int_{-\pi}^\xi\int \dg z'\cdot\frac{\dg^2 x_-}{|\dg x|^2|x_-|}d\e d\g.
	$$
	Analogously to $G_{112}$, we get
	\begin{align*}
		|G_{221}^2|&\leq 2\|F(x)\|^3_{L^\infty}\int_0^1dr\int\frac{d\e}{|\eta|}\int d\g
		(|\dg z(\g-r\eta)|+|\dg z'|)|\dg^2 x_-|\\
		&\leq C\|\dg z\|_{L^2}\|x\|_{H^{2+s}} \leq C\|z\|_{H^1}.
	\end{align*}
	%At this point it is easy to get the estimate $$|G_{2,2,1}^3|\leq C\|z\|_{H^1}.$$
	%Gathering together the last three estimates we have 
	%\begin{equation}\label{G221}
%		|G_{2,2,1}|\leq C\|z\|_{H^1}.
%	\end{equation}
	The desired bound  for $G_{221}$ is obtained. Regarding $G_{222}$, we similarly write 
	\begin{align*}
	G_{222}&=\int_{-\pi}^\xi\Big[\dg\Big(\int\frac{\dg y_-\cdot\dg
		z'}{|\dg x|^2|x_-|}d\e\Big)-\int \dg\Big(\frac{\dg
		y_-}{|\dg x|^2|x_-|}\Big)\cdot\dg z' d\e\Big] d\g
		%&=G_{2,2,2}^1+G_{2,2,2}^2+G_{2,2,2}^3,
	\end{align*}
	and proceed similarly. The terms $G_{223}$ and $G_{224}$ are easier to control and we omit further details. 

	\subsection{$H^1$ estimates}
	Our goal is to prove that 
	\begin{equation}\label{zH1}
		\frac{d}{dt}\|\p_\g z\|_{H^1}^2\leq  C(\big||\dg x|-|\dg y|\big|\|z\|_{H^1}+\|z\|_{H^1}^2).
	\end{equation}
	We have 
	$$
	\frac{1}{2}\frac{d}{dt}\|\dg z\|^2_{L^2}=\int \dg z\cdot \dg z_t d\g=I_3+I_4,
	$$
	where
	$$
	I_3=\int \dg z\cdot \int \dg\Big(\frac{\dg x_-}{|x_-|}-\frac{\dg y_-}{|y_-|} \Big)d\e d\g,\quad
	I_4=\int \dg z\cdot \dg(\lambda(x)\dg x-\lambda(y)\dg y) d\g.
	$$
	We split further $I_3=I_{31}+I_{32}$, where 
	$$
	I_{31}=\int \dg z\cdot \int \Big(\frac{\dg^2 x_-}{|x_-|}-\frac{\dg^2 y_-}{|y_-|} \Big)d\e d\g,
	$$
	$$
	I_{32}=\int \dg z\cdot \int \Big(-\frac{\dg x_-(x_-\cdot \dg x_-)}{|x_-|^3}+\frac{\dg y_-(y_-\cdot \dg y_-)}{|y_-|^3} \Big)d\e d\g.
	$$
	Furthermore, $I_{31}=I_{311}+I_{312}$, where
	$$I_{311}=\int \dg z\cdot \int \frac{\dg^2 z_-}{|x_-|} d\e d\g,
	\quad
	I_{312}=\int \dg z\cdot \int \dg^2 y_-\Big(\frac{1}{|x_-|}-\frac{1}{|y_-|}\Big) d\e d\g.
	$$
	We show that all the induced terms are bounded by the right-hand side of \eqref{zH1}. 
	
	Symmetrizing and using \eqref{extraGP} yield
	\begin{align}
		\begin{split}\label{I311}
			|I_{311}|&=\left|\frac14\int\int |\dg z_-|^2  \frac{x_-\cdot\dg x_-}{|x_-|^3} d\e d\g\right|\\
			&\leq 
			\|F(x)\|_{L^\infty}^3\|\dg x\|_{C^{\frac12+s}}^2\int\frac{d\e}{|\e|^{1-2s}}\int d\g(|\dg z|^2+|\dg z'|^2)\leq C\|\dg z\|^2_{L^2}.
		\end{split}
	\end{align}
	 $I_{312}$ is the most difficult term  due to the low regularity. %Indeed, if $\p^2_\g y\in L^\infty$, then 
	% \begin{align*}
	 %|I_{3,1,2}|&\le \| F(x)\|_{L^\infty}\| F(y)\|_{L^\infty}\int\int |\p_\g z||\p_\g^2y_-||z_-|\frac{1}{|\eta|^2}d\g d\e\\
	 %\end{align*}
	 We perform a further splitting, 
	\begin{equation}\label{SI312}
		I_{312}=J_1+J_2+J_3,
	\end{equation} 
	\begin{align*}
	&J_1=\int \dg z\cdot\dg^2 y \int \Big(\frac{1}{|x_-|}-\frac{1}{|\dg x||\e|}-\Big(\frac{1}{|y_-|}-\frac{1}{|\dg y||\e|}\Big)\Big) d\e d\g,\\
	&J_2=-\int \dg z\cdot \int \dg^2 y'\Big(\frac{1}{|x_-|}-\frac{1}{|\dg x||\e|}-\Big(\frac{1}{|y_-|}-\frac{1}{|\dg y||\e|}\Big)\Big) d\e d\g,\\
	&J_3=\int \dg z\cdot \int \dg^2 y_-\Big(\frac{1}{|\dg x||\e|}-\frac{1}{|\dg y||\e|}\Big) d\e d\g.
	\end{align*}
	Using that the modulus of the tangent vectors only depend on time, we have $|\p_\g x|=|\p_\g x'|$ and thus 
	\begin{align*}
		\frac{1}{|x_-|}-\frac{1}{|\dg x||\e|}&=\frac{|\dg x'||\e|-|x_-|}{|x_-||\dg x'||\e|}
		=\frac{(\dg x'\e-x_-)\cdot(\dg x'\e+x_-)}{|x_-||\dg x'||\e|(|x_-|+|\dg x'||\e|)}\\
		&=\frac{2(\dg x'\e-x_-)\cdot\dg x'\e}{|x_-||\dg x'||\e|(|x_-|+|\dg x'||\e|)}-
		\frac{|\dg x'\e-x_-|^2}{|x_-||\dg x'||\e|(|x_-|+|\dg x'||\e|)}.
	\end{align*}
	The same decomposition holds for $y$, implying $$J_1=J_{11}+J_{12}$$ with
	\begin{align*}
	&J_{11}=\int \dg z\cdot\dg^2 y \int \Big(\frac{2(\dg x'\e-x_-)\cdot\dg x'\e}{|x_-||\dg x'||\e|(|x_-|\!+\!|\dg x'||\e|)}-\frac{2(\dg y'\e-y_-)\cdot\dg y'\e}{|y_-||\dg y'||\e|(|y_-|\!+\!|\dg y'||\e|)}\Big) d\e d\g,\\
	&J_{12}=\int \dg z\cdot\dg^2 y \int \Big(\frac{|\dg y'\e-y_-|^2}{|y_-||\dg y'||\e|(|y_-|\!+\!|\dg y'||\e|)}-\frac{|\dg x'\e-x_-|^2}{|x_-||\dg x'||\e|(|x_-|\!+\!|\dg x'||\e|)}\Big) d\e d\g.
	\end{align*}
	Using \eqref{tvm} and that $\p_\g x\cdot \p_\g^2 x=0$ gives 
	\[
	2(\dg x'\e-x_-)\cdot\dg x'\e=2(\dg x'-\dg x(\g-r\e))\cdot\dg x'\e^2=|\dg x'-\dg x(\g-r\e)|^2\e^2,
	\]
	whence 
%	Next, we use \eqref{tvm} to rewrite $J_{1,1}$ as follows  
%	\begin{align*}
%		J_{1,1}=\int d\g \dg z\cdot\dg^2 y \int d\e \int_0^1 dr \Big(&\frac{2(\dg x'-\dg x(\g-r\e))\cdot\dg x'\e^2}{|x_-||\dg x'||\e|(|x_-|\!+\!|\dg x'||\e|)}\\
%		&\qquad -\frac{2(\dg y'-\dg y(\g-r\e))\cdot\dg y'\e^2}{|y_-||\dg y'||\e|(|y_-|\!+\!|\dg y'||\e|)}\Big)  ,	
%	\end{align*}
%	to find the appropriate expression
	\begin{align*}
		J_{11}=\int d\g \dg z\cdot\dg^2 y \int d\e \int_0^1 dr \Big(&\frac{|\dg x'-\dg x(\g-r\e)|^2\e^2}{|x_-||\dg x'||\e|(|x_-|\!+\!|\dg x'||\e|)} -\frac{|\dg y'-\dg y(\g-r\e)|^2\e^2}{|y_-||\dg y'||\e|(|y_-|\!+\!|\dg y'||\e|)}\Big).	
	\end{align*}
	Now,  $J_{11}=\sum_{j=1}^5J_{11j}$, where 
	\begin{align*}
		J_{111}=\int d\g \dg z\cdot\dg^2 y \int d\e \int_0^1 dr \frac{(\dg z'-\dg z(\g-r\e))\cdot(\dg x'-\dg x(\g-r\e))|\e|}{|x_-||\dg x'|(|x_-|\!+\!|\dg x'||\e|)},	
	\end{align*}
	\begin{align*}
		J_{112}=\int d\g \dg z\cdot\dg^2 y \int d\e \int_0^1 dr \frac{(\dg y'-\dg y(\g-r\e))\cdot(\dg z'-\dg z(\g-r\e))|\e|}{|x_-||\dg x'|(|x_-|\!+\!|\dg x'||\e|)},	
	\end{align*}
	\begin{align*}
		J_{113}=\int d\g \dg z\cdot\dg^2 y \int d\e \int_0^1 dr \frac{|\dg y'-\dg y(\g-r\e)|^2|\e|}{|\dg x'|(|x_-|\!+\!|\dg x'||\e|)}\Big(\frac1{|x_-|}-\frac1{|y_-|}\Big),	
	\end{align*}
	\begin{align*}
		J_{114}=\int d\g \dg z\cdot\dg^2 y \int d\e \int_0^1 dr \frac{|\dg y'-\dg y(\g-r\e)|^2|\e|}{|y_-|(|x_-|\!+\!|\dg x'||\e|)}\Big(\frac1{|\dg x'|}-\frac1{|\dg y'|}\Big),	
	\end{align*}
	and
	\begin{align*}
		J_{115}=\int d\g \dg z\cdot\dg^2 y \int d\e \int_0^1 dr& \frac{|\dg y'-\dg y(\g-r\e)|^2|\e|}{|y_-||\dg y'|}\\
		&\qquad \times \Big(\frac1{|x_-|\!+\!|\dg x'||\e|}-\frac1{|y_-|\!+\!|\dg y'||\e|}\Big).	
	\end{align*}
	In $J_{111}$ it is possible to use the next mean value identity
	$$
	\dg x'-\dg x(\g-r\e)=\e(r-1)\int_0^1d\rho \dg^2x(\gamma-(\rho+r(1-\rho))\eta)
	$$ 
	to bound as follows
	\begin{align*}
		|J_{111}|\leq\|F(x)\|^3_{L^\infty}\int d\g& |\dg z||\dg^2 y|\int_0^1 (1-r)dr\int_0^1 d\rho \\
		&\times \int d\e  (|\dg z'|+|\dg z(\g-r\e)|)|\dg^2x(\gamma-(\rho+r(1-\rho))\eta)|.
	\end{align*}
	Integrating first in $\eta$ and later in $\gamma$ it is possible to get the desired control:
	\begin{align*}
		J_{111}&\leq C\int_0^1 \frac{1-r}{r^{\mez}}dr\int_0^1 \frac{d\rho}{(\rho+r(1-\rho))^{\mez}} \|\dg z\|_{L^2}^2\|\dg^2 y\|_{L^2}\|\dg^2 x\|_{L^2}\\
		&\leq C\int_0^1 \frac{dr}{r^{\mez}}\int_0^1 \frac{d\rho}{\rho^{\mez}} \|\dg z\|_{L^2}^2\leq C\|z\|_{H^1}^2.
	\end{align*}
The other $J_{11j}$ and $J_{1, 2}$ can be treated in the same fashion, giving the desired bound for $J_1$.  As for $J_2$, we only consider the following counterpart of $J_{111}$ as the control for other terms follows along the same lines.
\begin{align*}
		J_{211}=-\int d\g \dg z\cdot \int d\e \dg^2 y' \int_0^1 dr \frac{(\dg z-\dg z(\g-r\e))\cdot(\dg y-\dg y(\g-r\e))|\e|}{|y_-||\dg y|(|y_-|\!+\!|\dg y||\e|)}.
\end{align*}
The mean value theorem gives
	$$
	\dg y-\dg y(\g-r\e)=r\e\int_0^1d\rho\dg^2y(\gamma+(\rho-1)r\e),
	$$ 
	so that
	\begin{align*}
		|J_{211}|\leq& C\int_0^1\!\! rdr\int_0^1\!\!d\rho \int\! d\g |\dg z| \int\! d\e |\dg^2 y'|(|\dg z|+|\dg z(\g\!-\! r\e)|)|\dg^2 y(\g+(\rho\!-\!1)r\e)|=K_1+K_2,
	\end{align*}
	where
	\begin{align*}
	&K_1= C\int_0^1\!\! rdr\int_0^1\!\!d\rho \int\! d\g |\dg z|^2 \int\! d\e |\dg^2 y'||\dg^2 y(\g+(\rho-1)r\e)|,\\
	&K_2= C\int_0^1\!\! rdr\int_0^1\!\!d\rho \int\int d\g d\e |\dg z||\dg^2 y'||\dg z(\g-r\e)|)|\dg^2 y(\g+(\rho-1)r\e)|.
	\end{align*}
	By Cauchy-Schwarz's inequality in $\eta$, 
	\begin{align*}
		K_{1}\leq& C\int_0^1\!\! r^{\mez}dr\int_0^1\!\!\frac{d\rho}{(1-\rho)^{\mez}}\|z\|_{H^1}^2\leq C\|z\|_{H^1}^2.
	\end{align*}
	For $K_2$ we use Cauchy-Schwarz's inequality in both $\g$ and $\e$ to have
	\begin{align*}
		K_2&\leq C\int_0^1\!\! rdr\!\!\int_0^1\!\!d\rho \Big(\int\!\!\!\int\!\! d\g d\e |\dg z|^2|\dg^2 y(\g\!+\!(\rho\!-\!1)r\e)|^2\Big)^{\frac12}\Big(\int\!\!\!\int\!\! d\g d\e|\dg^2 y'|^2|\dg z(\g\!-\!r\e)|^2\Big)^{\frac12}\\
		&\leq C\int_0^1\!\! \frac{r^{\mez}dr}{(1-r)^{\mez}}\!\!\int_0^1\!\!\frac{d\rho}{(1-\rho)^{\mez}} \|\dg z\|_{L^2}^2\|\dg^2 y\|_{L^2}^2\leq C\|z\|_{H^1}^2,
	\end{align*}
	where we have made the change of variables $\zeta=\g-\eta$, $d\g=d\zeta$ in the second double integral. Consequently, $|J_{211}|\leq C\|z\|_{H^1}^2$ and thus 
	\[
	|J_1|+|J_2|\le  C\|z\|_{H^1}^2.
	\]
	To bound $J_3$ we recall that $|\p_\g x|$ and $|\p_\g y|$ depend only on $t$, so that as in the control of $G_{112}$, 
	\begin{align*}
		|J_3|&\leq \| F(x)\|_{L^\infty} \| F(y)\|_{L^\infty}\big||\dg x|-|\dg y|\big| \int\frac{d\e}{|\e|}\int d\g |\dg z||\dg^2y_-|\\
		&\les \| F(x)\|_{L^\infty} \| F(y)\|_{L^\infty}\big||\dg x|-|\dg y|\big|\| \p_\g z\|_{L^2}\| \dg^2y\|_{\dot H^s}\\
		&\le C\big||\dg x|-|\dg y|\big|\|\dg z\|_{L^2}.
	\end{align*}
	We have proved that $I_{312}=J_1+J_2+J_3$ is bounded by
	$$
	|I_{312}|\leq C(\big||\dg x|-|\dg y|\big|\|z\|_{H^1}+\|z\|_{H^1}^2)
	$$
	which in conjunction with \eqref{I311} yields
	\begin{equation}\label{I31}
		|I_{31}|\leq C(\big||\dg x|-|\dg y|\big|\|z\|_{H^1}+\|z\|_{H^1}^2).
	\end{equation}
	Note that the control quantity $\big||\dg x|-|\dg y|\big|\|z\|_{H^1}$ is {\it only due to $J_3$}. 
	
	Regarding $I_{32}$, we use identity \eqref{extraGP} to split $I_{32}=\sum_{j=1}^8I_{32j}$, where
	$$
	I_{321}=-\int \dg z\cdot \int \dg z_- \frac{(x_--\dg x\e)\cdot \dg x_-}{|x_-|^3} d\e d\g,$$
	
	$$
	I_{322}=-\int \dg z\cdot \int\dg y_- \frac{(z_--\dg z\e)\cdot \dg x_-}{|x_-|^3} d\e d\g,
	$$
	$$
	I_{323}=-\int \dg z\cdot \int\dg y_- \frac{(y_--\dg y_-\eta)\cdot \dg z_-}{|y_-|^3} d\e d\g,
	$$
	$$
	I_{324}=-\int \dg z\cdot \int \dg y_-(y_--\dg y_-\e)\cdot \dg y_-(|x_-|^{-3}-|y_-|^{-3}) d\e d\g,
	$$
	$$
	I_{325}=-\int \dg z\cdot \int \dg z_- \frac{\e|\dg x_-|^2}{2|x_-|^3} d\e d\g,
	$$
	$$
	I_{326}=-\int \dg z\cdot \int\dg y_- \frac{\e \dg z_-\cdot \dg x_-}{2|x_-|^3} d\e d\g,
	$$
	$$
	I_{327}=-\int \dg z\cdot \int\dg y_- \frac{\e \dg y_-\cdot \dg z_-}{2|x_-|^3} d\e d\g,
	$$
	and
	$$
	I_{328}=-\frac12\int \dg z\cdot \int \dg y_-\e |\dg y_-|^2(|x_-|^{-3}-|y_-|^{-3}) d\e d\g.
	$$
	Analogously to the control of $G_{112}$ we find $I_{32}\leq C\|z\|_{H^1}^2$. Combining this with \eqref{I31} we obtain
	\begin{equation}\label{I3}
		|I_{3}|\leq C(\big||\dg x|-|\dg y|\big|\|z\|_{H^1}+\|z\|_{H^1}^2).
	\end{equation}
	Finally, we write $I_4=\sum_{j=1}^4I_{4j}$, where
	\begin{align*}
	&I_{41}=\int \dg z\cdot \lambda(x)\dg^2 z d\g,\quad
	I_{42}=\int \dg z\cdot (\lambda(x)-\lambda(y))\dg^2 y d\g,\\
	&I_{43}=\int |\dg z|^2\dg\lambda(x) d\g,\quad
	I_{44}=\int \dg z\cdot\dg y (\dg\lambda(x)-\dg\lambda(y)) d\g.
	\end{align*}
	Integration by parts in $I_{41}$ yields $I_{41}=-\mez I_{43}$, while $$|I_{43}|\leq \|\dg z\|_{L^2}^2\|\dg\lambda(x)\|_{L^\infty}\leq C\|z\|^2_{H^1},$$
	where we have used. By virtue of \eqref{ld:C1},  we have 
	$$|I_{42}|\leq C \|\dg^2 y\|_{L^2}\|\dg z\|_{L^2}\|\lambda(x)-\lambda(y)\|_{L^\infty}\leq C\|z\|^2_{H^1}.$$
       Next, we integrate by parts
	$$
	I_{44}=-\int \dg z\cdot\dg^2 y (\lambda(x)-\lambda(y)) d\g-\int \dg^2 z\cdot\dg y (\lambda(x)-\lambda(y)) d\g
	$$
	and use the identity $\dg^2 z\cdot\dg y=-\dg^2 x\cdot\dg z$ to have
	$$
	I_{44}\leq (\|\dg^2 y\|_{L^2}+\|\dg^2 x\|_{L^2})\|\dg z\|_{L^2} \|\lambda(x)-\lambda(y)\|_{L^\infty}\leq C\|z\|_{H^1}^2.
	$$
	It follows that
	\bq\label{est:I4}
	I_4\leq C\|z\|_{H^1}^2.
	\eq
	 A  combination of \eqref{I3} and \eqref{est:I4} leads to \eqref{zH1}.
	\subsection{Estimates for $\big||\dg x|-|\dg y|\big|$}
	 Identity
	$$
	\frac{d}{dt}|\dg x|^2=\frac1\pi\int\dg x\cdot\dg\Big(\int\frac{\dg x_-}{|x_-|}d\e\Big)d\g,
	$$
	and integration by parts provide
	$$
	\frac{d}{dt}|\dg x|=\frac{-1}{2\pi}\int\frac{\dg^2 x}{|\dg x|}\cdot\int\frac{\dg x_-}{|x_-|}d\e d\g.
	$$
	This new identity allows to find the following spiting
	$$
	\frac{d}{dt}(|\dg x|-|\dg y|)=I_5+I_6+I_7+I_8,
	$$
	where 
	$$
	I_5=\frac{-1}{2\pi}\int \big(\frac{1}{|\dg x}-\frac{1}{|\dg y|}\big)\dg^2 x\cdot\int\frac{\dg x_-}{|x_-|}d\e d\g,
	\quad
	I_6=\frac{-1}{2\pi}\int\frac{\dg^2 z}{|\dg y|}\cdot\int\frac{\dg x_-}{|x_-|}d\e d\g,
	$$
	$$
	I_7=\frac{-1}{2\pi}\int\frac{\dg^2 y}{|\dg y|}\cdot\int\frac{\dg z_-}{|x_-|}d\e d\g,
	\quad\mbox{and}\quad
	I_8=\frac{-1}{2\pi}\int\frac{\dg^2 y}{|\dg y|}\cdot\int\dg y_-\big(\frac{1}{|x_-|}-\frac{1}{|y_-|})d\e d\g.
	$$
	Proceeding as before, we can obtain
	$$
	I_5\leq \|F(x)\|_{L^\infty}^3\|\dg x\|_{C^{\frac12}}\int\frac{d\e}{|\e|^{\mez}}\int d\g|\dg^2 x||\dg z|\leq C\|z\|_{H^1}.
	$$ 
	It is possible to integrate by parts in $I_6$ in such a way that
	$$
	I_6=\frac{1}{2\pi}\int\frac{\dg z}{|\dg y|}\cdot\int\frac{\dg^2 x_-}{|x_-|}d\e d\g-\frac{1}{2\pi}\int\frac{\dg z}{|\dg y|}\cdot\int\dg x_-\frac{x_-\cdot\dg x_-}{|x_-|^3}d\e d\g.
	$$
	Therefore,  following the control of $G_{1,1,2}$ (see \eqref{G112}) we bound
	\begin{align*}
		I_6&\les \|F(y)\|_{L^\infty}\left\{\|F(x)\|_{L^\infty}\|\dg z\|_{L^2}\|\p_\g^2 x\|_{\dot H^{s}}+\|F(x)\|_{L^\infty}^2\|\dg x\|_{C^{\frac12}}\|\dg z\|_{L^2}\|\dg^2x\|_{L^{2}}\right\}\\
		&\leq C\|z\|_{H^1}.
	\end{align*}
	Next, we symmetrize $I_7$ as
	$$
	I_7=\frac{-1}{4\pi}\int\!\!\int\frac{\dg^2 y_-}{|\dg y|}\cdot\frac{\dg z_-}{|x_-|}d\e d\g=\frac{-1}{2\pi}\int\frac{\dg z}{|\dg y|}\cdot\int\frac{\dg^2 y_-}{|x_-|}d\e d\g,
	$$
	so that
	$$
	I_7\leq \|F(y)\|_{L^\infty}\|F(x)\|_{L^\infty}\|\dg z\|_{L^2}\|\p_\g^2y\|_{\dot H^s}\leq C\|z\|_{H^1}.
	$$
	Finally,
	\begin{align*}
	I_8&\les  \|F(y)\|_{L^\infty}^2\|F(x)\|_{L^\infty}\|\dg y\|_{C^{\frac12}}\int_0^1\int\int|\p_\g^2y(\g)||\p_\g z(\g-r\eta)|d\g\frac{d\eta}{|\eta|^\mez}dr\\
	&\les \|F(y)\|_{L^\infty}^2\|F(x)\|_{L^\infty}\|\dg y\|_{C^{\frac12}}\|\dg^2y\|_{L^{2}}\|\dg z\|_{L^2} \leq C\|z\|_{H^1}.
	\end{align*}
	We have proved that
	$$
	\big|\frac{d}{dt}(|\dg x|-|\dg y|)\big|\leq C\|z\|_{H^1}
	$$
	which together with \eqref{zH1} implies the closed differential inequality 
	\bq
	\frac{d}{dt}\big(\|z\|^2_{H^1}+\big||\dg x|-|\dg y|\big|^2\big)\leq  C\big(\|z\|^2_{H^1}+\big||\dg x|-|\dg y|\big|^2\big).
	\eq
	The use of Gronwall's inequality yields the stability estimate \eqref{stability}, finishing the proof of Theorem \ref{theo:stability}.
	%%%%%%%%%%
	\section{Proof of the main result--Theorem \ref{mainTheorem}}\label{sec:existence}

In this section we construct solutions of system \eqref{SQGp}-\eqref{ld}  with initial data $x^o(\g)\in H^{2+s}$, $s\in (0, \mez)$. The uniqueness has been proved in Theorem \ref{theo:stability}. The existence is done by regularizing the initial data and applying the existence result for regular ($H^3$) data established in \cite{G}. %Then, the a priori estimates above provide energy estimates independent to the mollifier parameter allowing us to take the limit and getting the result. The above argument is standard. But in this case the a priori estimates use strongly the fact that the tangent vector of the curve only depend on time if this property is initially satisfied. Because of that, we detail the main differences with the standard argument. 
Since our a priori estimates crucially use the fact that the tangent vector's length is independent of the parameter of the curve, the regularization procedure must maintain this property. Assume that the initial closed curve $x^o(\gamma)\in H^{2+s}$ satisfies the arc chord condition, i.e. $\|F(x^o)\|_{L^\infty}<\infty$. In addition, upon reparametrizing (see below) we may assume $\dg |\dg x^o(\g)|=0$. % Set $L=\int_{-\pi}^\pi|\dg x^o(\g)|d\g=2\pi|\dg x^o(\g)|$. 
First, we mollify $x^o$ to get 
$$
x^o_\varepsilon(\gamma)=(\Gamma_\varepsilon*x^o)(\g),
$$
with $\Gamma_\varepsilon$ an approximation of the identity. It is clear that $x^o_\varepsilon\in C^{\infty}(\T)$ and%and there is strong convergence and an upper bound in $H^{2+s}$:
\begin{equation}\label{xoepsproperties}
	\lim_{\eps \to 0}\|x^o_\varepsilon-x^o\|_{H^{2+s}}= 0.%\quad\mbox{as}\quad \varepsilon\to0,\quad \|x^o_\varepsilon\|_{H^{2+s}}\leq \|x^o\|_{H^{2+s}}.
\end{equation}
This ensures that for small $\eps$, $x^o_\eps$ satisfies the arc chord condition. To obtain the constant length (in parameter) of the tangent vector, we reparametrize using
$$
\phi_\varepsilon:[-\pi,\pi]\to[-\pi,\pi],\quad \phi_{\varepsilon}(\xi)=-\pi+\frac{2\pi}{L_\varepsilon}\int_{-\pi}^{\xi}|\dg  x^o_\varepsilon(\g)|d\g,\quad L_\varepsilon=\int_{-\pi}^\pi|\dg  x^o_\varepsilon(\g)|d\g.
$$
It is clear that $\phi_\varepsilon\in C^{\infty}(\T)$ and its inverse  $\phi_\varepsilon^{-1}$ is well defined. Then the curve
$$
\tilde{x}^o_\varepsilon(\gamma)=x^o_\varepsilon(\phi_\varepsilon^{-1}(\g))
$$
satisfies 
$$
\tilde{x}^o_\varepsilon\in C^\infty(\T),\quad\dg |\dg \tilde{x}^o_\varepsilon(\g)|=0.
$$
(In fact, $ |\dg \tilde{x}^o_\varepsilon(\g)|=\frac{L_\eps}{2\pi}$). We postpone the proof of the following lemma to the end of this section. 
\begin{lemm}\label{lemm:data}
\bq\label{converge:data}
 \lim_{\eps\to 0}\|\tilde{x}^o_\varepsilon-x^o\|_{H^{2+s}}=0.%,\quad \|\tilde{x}^o_\varepsilon\|_{H^{2+s}}\leq 2\|x^o\|_{H^{2+s}}\quad\mbox{for $\varepsilon$ small enough}.
\eq
\end{lemm}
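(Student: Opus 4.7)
The plan is to show that $\psi_\varepsilon := \phi_\varepsilon^{-1}$ converges to the identity map in $H^{2+s}(\T)$ (and, by Sobolev embedding, in $C^{1+s'}$ for some $s' > 0$), and then to transfer this through the composition $\tilde x^o_\varepsilon = x^o_\varepsilon \circ \psi_\varepsilon$ to conclude convergence to $x^o$ in $H^{2+s}$. First I would observe that, because $|\dg x^o|$ is constant by hypothesis, the identity map is precisely the reparametrization for $x^o$ itself, corresponding to the $\varepsilon \to 0$ limit; and that the arc chord condition forces $|\dg x^o| = L_0/(2\pi) > 0$, where $L_0 = \int_{-\pi}^\pi |\dg x^o|\,d\gamma$. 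Combining this with \eqref{xoepsproperties} and the embedding $H^{1+s} \hookrightarrow L^\infty$, we deduce $|\dg x^o_\varepsilon| \geq L_0/(4\pi)$ uniformly for all sufficiently small $\varepsilon$.

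Next I would show $|\dg x^o_\varepsilon| \to |\dg x^o|$ in $H^{1+s}(\T)$. Since $|\dg x^o_\varepsilon|^2 = \dg x^o_\varepsilon \cdot \dg x^o_\varepsilon$ converges in $H^{1+s}$ to $|\dg x^o|^2$ (using that $H^{1+s}$ is an algebra, as $1+s > \mez$), and the square root is smooth on the region $\{t \geq L_0^2/(16\pi^2)\}$, a standard composition estimate in $H^{1+s}$ delivers the claim. In particular $L_\varepsilon \to L_0$ and $\phi_\varepsilon' = \frac{2\pi}{L_\varepsilon}|\dg x^o_\varepsilon| \to 1$ in $H^{1+s}$, so $\phi_\varepsilon \to \mathrm{id}$ in $H^{2+s}$. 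The inverse function theorem together with the change-of-variables formula in fractional Sobolev spaces — valid since $\phi_\varepsilon' \geq c > 0$ uniformly — then yields $\psi_\varepsilon \to \mathrm{id}$ in $H^{2+s}$, hence in $C^{1+s'}$ for any $s' < s + \mez$.

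Finally I would write
\[
\tilde x^o_\varepsilon - x^o = (x^o_\varepsilon - x^o)\circ \psi_\varepsilon + (x^o \circ \psi_\varepsilon - x^o)
\]
and bound each piece separately. The composition operator $f \mapsto f\circ \psi_\varepsilon$ is uniformly bounded on $H^{2+s}$ because $\psi_\varepsilon$ is a $C^1$-diffeomorphism with uniformly bounded $H^{2+s}$ norm and $1/\psi_\varepsilon'$ uniformly bounded in $L^\infty$; thus the first term is $\lesssim \|x^o_\varepsilon - x^o\|_{H^{2+s}} \to 0$ by \eqref{xoepsproperties}. For the second term I would use density: for smooth $f$, one directly checks $f\circ \psi_\varepsilon \to f$ in $H^{2+s}$ via the chain rule and $\psi_\varepsilon \to \mathrm{id}$ in $C^{1+s'}$; a $3\varepsilon$ argument combined with the uniform boundedness of the composition operator then extends the convergence to $x^o \in H^{2+s}$. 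The main obstacle is the continuity of the composition operator on the fractional space $H^{2+s}$: one must handle how $\Lambda^s$ interacts with $\p^2(f\circ\psi_\varepsilon) = (\p^2 f)\circ \psi_\varepsilon\,(\psi_\varepsilon')^2 + (\p f)\circ \psi_\varepsilon\,\psi_\varepsilon''$, which is done via paraproduct decomposition together with the embedding $H^{1+s} \hookrightarrow C^{s'}$ and commutator estimates in the spirit of Lemma \ref{lemm:cmt}.
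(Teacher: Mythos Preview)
Your proposal is correct, and the overall skeleton matches the paper: both prove $\phi_\varepsilon\to\mathrm{id}$ in $H^{2+s}$, pass to the inverse, and then push this through the composition $x^o_\varepsilon\circ\phi_\varepsilon^{-1}$. The execution, however, is genuinely different.

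For the key step $\phi_\varepsilon\to\mathrm{id}$ in $H^{2+s}$, you exploit that $H^{1+s}(\T)$ is an algebra (since $1+s>\tfrac12$): from $\dg x^o_\varepsilon\to\dg x^o$ in $H^{1+s}$ you get $|\dg x^o_\varepsilon|^2\to|\dg x^o|^2$ in $H^{1+s}$, then apply a Moser-type composition estimate for the square root (smooth on the range, which stays uniformly away from zero) to obtain $|\dg x^o_\varepsilon|\to|\dg x^o|$ in $H^{1+s}$, hence $\phi_\varepsilon'\to 1$ in $H^{1+s}$. The paper instead computes $\Ld^s\p_\g^2\phi_\varepsilon$ directly through the singular-integral representation of $\Ld^s$, subtracts the null quantity $\Ld^s\big(\tfrac{2\pi}{L}\tfrac{\dg x^o\cdot\dg^2 x^o}{|\dg x^o|}\big)=0$ (using $\dg x^o\cdot\dg^2 x^o=0$), and then splits the result into six explicit pieces $G_1,\dots,G_6$, each shown to vanish in $L^2$ by hand. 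Your argument is shorter and more conceptual, leaning on standard nonlinear estimates in the algebra $H^{1+s}$; the paper's argument is entirely self-contained and reuses the same singular-integral bookkeeping developed elsewhere in the paper. For the final composition step, the paper splits as $\|x^o_\varepsilon\circ\psi_\varepsilon-x^o_\varepsilon\|+\|x^o_\varepsilon-x^o\|$ and asserts convergence without further comment, whereas your splitting $\|(x^o_\varepsilon-x^o)\circ\psi_\varepsilon\|+\|x^o\circ\psi_\varepsilon-x^o\|$ plus the density/$3\varepsilon$ argument makes the role of uniform boundedness of the composition operator explicit; in practice both require the same ingredients (change of variables in the Gagliardo seminorm for $H^s$, and the product/commutator structure you mention for the higher-order pieces).
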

Now  $\tilde{x}^o_\varepsilon$ is a smooth initial curve satisfying the arc chord condition and $\dg |\dg \tilde{x}^o_\varepsilon(\g)|=0$. Set $\eps=n^{-1}$ and relabel $ \tilde{x}^o_\varepsilon= \tilde{x}^o_n$. Applying the existence result in \cite{G} we obtain for each $n$ a solution $  x_n\in C([0, T_n]; H^3)$ satisfying the arc chord condition and $  x_n\vert_{t=0}=\tilde{x}^o_n$.  The convergence \eqref{converge:data} ensures that the sequences $\| \tilde{x}^0_n\|_{H^{2+s}}$ and $\| F(\tilde{x}^o_n)\|_{L^\infty(\T\times \T)}$ are bounded. Using this and a continuity argument, we deduce from the a priori estimates \eqref{est:norm} and \eqref{est:F} that there exists $0<T<T_n$ for all $n$ such that the sequences  
\[
\|  x_n\|_{C([0, T]; H^{2+s})}\quad\text{and}\quad \| F(  x_n)\|_{L^\infty([0, T]; L^\infty(\T\times \T))}
\]
 are bounded. It follows easily from equations \eqref{SQGp}-\eqref{ld} with the aid of \eqref{dld:Linfty} that  $\p_t  x_n$ is uniformly bounded in $L^\infty([0, T]; L^2)$. By virtue of the Aubin-Lions lemma, there exists $x\in C_w([0, T]; H^{2+s})\cap C([0, T]; H^2)$ such that (upon extracting a subsequence)
 \bq\label{convergence:xn}
  x_n\wsc x \quad \text{in } L^\infty([0, T]; H^{2+s})\quad \text{and}\quad  x_n\to x\quad\text{in } C([0, T]; H^2).
 \eq
Using these convergences it can be shown that $x$ is a solution of  \eqref{SQGp}-\eqref{ld}. We note that $\ld(x_n)\to \ld(x)$ in $L^\infty$ in view of \eqref{boundsLambdaLinfty}. Next, we show the continuity in time $x\in C([0, T]; H^{2+s})$. Indeed, from \eqref{est:norm}  $x_n$ satisfies 
\[
\| x_n(t)\|_{H^{2+s}}^2\le \| \tilde{x}^o_n\|_{H^{2+s}}^2\exp\Big(\int_0^t  \cP(\| F(x_n)(t')\|_{L^\infty}, \| \p_\g^2x_n(t')\|_{H^s})dt'\big)\le  \| \tilde{x}^o_n\|_{H^{2+s}}^2\exp(tC(T))
\]
for all $t\in [0, T]$. Letting $n\to \infty$ yields 
\[
\| x\|_{L^\infty([0, t]; H^{2+s})}^2\le \limsup_{n\to \infty}\|  x_n\|_{L^\infty([0, t]; H^{2+s})}^2\le \| x^o\|_{H^{2+s}}^2\exp(tC(T)),\quad t\in [0, T].
\]
It follows that 
\[
\lim_{t\to 0^+}\| x(t)\|_{H^{2+s}}\le  \| x^o\|_{H^{2+s}}
\]
which combined with the weak continuity $x\in C_w([0, T]; H^{2+s})$ implies that $x$ is continuous from the right at $t=0$ with values in $H^{2+s}$. Next, for any $t_0\in (0,  T)$, we consider $x(t_0)$ as the new initial data. The above argument gives a solution  $\wt x\in L^\infty([t_0, t_0+\delta]; H^{2+s})$ with $\delta=\delta(t_0) \in (0, T-t_0)$ such that $\wt x$ is continuous from the right at $t_0$ with values in $H^{s+2}$. The same property holds for $x$ since the uniqueness result in Theorem \ref{theo:stability} implies $x=\wt x$ on $[t_0, t_0+\delta]$. The left continuity can be obtained similarly using the fact that \eqref{SQGp}-\eqref{ld} are time reversible.

{\it Proof of Lemma \ref{lemm:data}}

We shall prove that
\begin{equation}\label{phiproperties}
	\lim_{\eps \to 0} \|\phi_\varepsilon(\,\cdot\,)-\cdot\,\|_{H^{2+s}}= 0%\quad\mbox{as}\quad \varepsilon\to0,\quad \|\phi_\varepsilon\|_{H^{2+s}}\leq C\quad\mbox{for $C$ uniform in $\varepsilon$}.
\end{equation}
which in turn yields same for the inverse
$$
\lim_{\eps\to 0} \|\phi_\varepsilon^{-1}(\,\cdot\,)-\cdot\,\|_{H^{2+s}}= 0%\quad\mbox{as}\quad \varepsilon\to0,\quad \|\phi_\varepsilon^{-1}\|_{H^{2+s}}\leq C\quad\mbox{for $C$ uniform in $\varepsilon$},
$$	
upon using the formula for derivatives of inverse functions. These imply the desired convergence
$$
\|\tilde{x}^o_\varepsilon-x^o\|_{H^{2+s}}\leq \|x^o_\varepsilon\circ\phi^{-1}_\varepsilon-x^o_\varepsilon\|_{H^{2+s}}+\|x^o_\varepsilon-x^o\|_{H^{2+s}}\to 0\quad\mbox{as}\quad \varepsilon\to0.
$$
It can be easily checked that $\| \phi_\eps(\cdot)-\cdot\|_{L^2}\to 0$. For the highest order derivative, we first compute 
 $$
\partial_\g^2\phi_\varepsilon(\g)=\frac{2\pi}{L_\varepsilon}\frac{\dg x^o_\varepsilon(\g)\cdot \dg^2x^o_\varepsilon(\g)}{|\dg x^o_\varepsilon(\g)|}.
$$
Then using the formula 
$$
\Lambda^{s}\partial_\g^2\phi_\varepsilon(\g)=c\int_{\Rr}\frac{\partial_\g^2\phi_\varepsilon(\g)-\partial_\g^2\phi_\varepsilon(\g-\e)}{|\e|^{1+s}}d\e
$$
and  the identity
$$
\Lambda^{s}\big(\frac{2\pi}{L}\frac{\dg x^o(\g)\cdot \dg^2x^o(\g)}{|\dg x^o(\g)|}\big)=\Lambda^{s}(0)=0,\quad L=\int_{-\pi}^\pi|\dg x^o(\g)|d\g=2\pi|\dg x^o(\g)|,
$$
we can decompose 
$$
\Lambda^{s}\partial_\g^2\phi_\varepsilon=\sum_{j=1}^6G_j,
$$
where
$$
G_1(\gamma)=2\pi\Big(\frac{1}{L_\varepsilon}-\frac1{L}\Big)\frac{\dg x^o_\varepsilon(\g)}{|\dg x^o_\varepsilon(\g)|}\cdot\Lambda^s\dg x^o_\varepsilon(\gamma),\quad G_2(\gamma)=\frac{2\pi}{L}\Big(\frac{\dg x^o_\varepsilon(\g)}{|\dg x^o_\varepsilon(\g)|}-\frac{\dg x^o(\g)}{|\dg x^o(\g)|}\Big)\cdot\Lambda^s\dg x^o_\varepsilon(\gamma),
$$
$$
G_3(\gamma)=\frac{2\pi}{L}\frac{\dg x^o(\g)}{|\dg x^o(\g)|}\cdot(\Lambda^s\dg x^o_\varepsilon-\Lambda^s\dg x^o),
$$
$$
G_4(\gamma)=\Big(\frac{2\pi c}{L_\varepsilon}-\frac{2\pi c}{L}\Big) \int_{\Rr}\Big(\frac{\dg x^o_\varepsilon(\g)}{|\dg x^o_\varepsilon(\g)|}-\frac{\dg x^o_\varepsilon(\g-\e)}{|\dg x^o_\varepsilon(\g-\e)|}\Big)\cdot\frac{\dg^2x^o_\varepsilon(\g-\e)}{|\e|^{1+s}}d\e,
$$
$$
G_5(\gamma)=\frac{2\pi c}{L} \int_{\Rr}\Big[\Big(\frac{\dg x^o_\varepsilon(\g)}{|\dg x^o_\varepsilon(\g)|}-\frac{\dg x^o(\g)}{|\dg x^o(\g)|}\Big)-\Big(\frac{\dg x^o_\varepsilon(\g-\e)}{|\dg x^o_\varepsilon(\g-\e)|}-\frac{\dg x^o(\g-\e)}{|\dg x^o(\g-\e)|}\Big)\Big]\cdot\frac{\dg^2x^o_\varepsilon(\g-\e)}{|\e|^{1+s}}d\e,
$$
and
$$
G_6(\gamma)=\frac{2\pi c}{L} \int_{\Rr}\Big(\frac{\dg x^o(\g)}{|\dg x^o(\g)|}-\frac{\dg x^o(\g-\e)}{|\dg x^o(\g-\e)|}\Big)\cdot\frac{\dg^2x^o_\varepsilon(\g-\e)-\dg^2x^o(\g-\e)}{|\e|^{1+s}}d\e.
$$
%\begin{align}
%	\begin{split}\label{G1G2}
%		\Lambda^{s}\partial_\g^2\phi_\varepsilon(\g)=&\frac{2\pi c}{L_\varepsilon}\frac{\dg x^o_\varepsilon(\g)}{|\dg x^o_\varepsilon(\g)|}\cdot \int_{\Rr}\frac{\dg^2x^o_\varepsilon(\g)-\dg^2x^o_\varepsilon(\g-\e)}{|\e|^{1+s}}d\e\\
%		&+\frac{2\pi c}{L_\varepsilon} \int_{\Rr}\Big(\frac{\dg x^o_\varepsilon(\g)}{|\dg x^o_\varepsilon(\g)|}-\frac{\dg x^o_\varepsilon(\g-\e)}{|\dg x^o_\varepsilon(\g-\e)|}\Big)\cdot\frac{\dg^2x^o_\varepsilon(\g-\e)}{|\e|^{1+s}}d\e=G_1(\g)+G_2(\g).
%	\end{split}
%\end{align}
Using the convergence \eqref{xoepsproperties}, we can show that $\| G_j\|_{L^2}\to 0$ as $\eps\to 0$ for all $1\le j\le 6$. Indeed, denoting $B=|\dg x^o(\g)|>0$ independent of $\g$, \eqref{xoepsproperties} implies 
\[
\exists\, \eps_0>0,~  \forall \eps\in (0, \eps_0),~\forall \g\in \T,~B_\eps(\g):=|\dg x^o_\eps(\g)|\ge \frac{B}{2}.
\]
 Note in addition that $B^{-1}\le \| F(x^o)\|_{L^\infty}$ and $\| B_\eps-B\|_{L^\infty}\to 0$. In particular, $L_\eps\to L$ and thus $\| G_1\|_{L^2}\to 0$. As for $G_2$ we estimate 
\begin{align*}
\| G_2\|_{L^2}&\le \frac{2\pi}{L}\Big\| \frac{\dg x^o_\varepsilon(\cdot)}{|\dg x^o_\varepsilon(\cdot)|}-\frac{\dg x^o(\cdot)}{|\dg x^o(\cdot)|}\Big\|_{L^\infty}\|\Lambda^s\dg x^o_\varepsilon\|_{L^2},
\end{align*}
where the $L^\infty$ norm tends to $0$ because \eqref{xoepsproperties} implies that $\p_\g x^o_\varepsilon\to \p_\g x^o$ in $H^{1+s}\subset C^{\mez+s}$. The term $G_3$ is obvious. Since $L_\eps \to L$, in $G_4$ it suffices to bound 
\begin{align*}
K&:=\left\|\int_{\Rr}\Big(\frac{\dg x^o_\varepsilon(\cdot)}{|\dg x^o_\varepsilon(\cdot)|}-\frac{\dg x^o_\varepsilon(\cdot-\e)}{|\dg x^o_\varepsilon(\cdot-\e)|}\Big)\cdot\frac{\dg^2x^o_\varepsilon(\cdot-\e)}{|\e|^{1+s}}d\e\right\|_{L^2(\T)}\\
&\le \big\|\frac{\dg x^o_\varepsilon }{|\dg x^o_\varepsilon|}\big\|_{C^{\frac12}}\Big(\int_{-\pi}^\pi\Big(\int_{-1}^{1}\frac{|\dg^2x^o_\varepsilon(\g-\e)|}{|\e|^{\frac12+s}}d\e\Big)^2d\g\Big)^{\frac12}+ 2\Big(\int_{-\pi}^\pi\Big(\int_{|\eta|>1}\frac{|\dg^2x^o_\varepsilon(\g-\e)|}{|\e|^{1+s}}d\e\Big)^2d\g\Big)^{\frac12},
\end{align*}
where 
\[
\big\|\frac{\dg x^o_\varepsilon }{|\dg x^o_\varepsilon|}\big\|_{C^{\frac12}}\le M=M(\|\dg x^o_\varepsilon\|_{C^{\mez}}, B).
\]
 Minkowski's inequality  allows to bound 
\begin{align*}
K&\le\Big(M\int_{-1}^1\frac{d\e}{|\e|^{\frac12+s}}+2\int_{|\eta|>1}\frac{d\e}{|\e|^{1+s}}\Big)\Big(\int_{-\pi}^\pi|\dg^2x^o_\varepsilon(\g-\e)|^2d\g\Big)^{\frac12}\le C(M+1) \| \p_\g^2 x^o_\varepsilon\|_{L^2}
\end{align*}
which is uniformly bounded in $\eps$.  Here and in the remainder of this proof, $C$ denotes absolute  constants. By an analogous argument using that $B$ is independent of $\g$, we obtain 
\[
\| G_6\|_{L^2}\le \frac{C}{LB}\|\p_\g x^o\|_{C^\mez}\| \p_\g^2 x^o_\varepsilon-\p_\g^2 x^o\|_{L^2}\to 0.
\]
As for $G_5$, using the notation $x_-(\g, \eta)=x(\g)-x(\g-\eta)$ we rewrite 
\begin{align*}
G_5(\gamma)%=\frac{2\pi c}{L} \int_{\Rr}\Big[\frac{\p_\g x^o_{\eps, -}(\g, \eta)}{A_\eps}-\frac{\p_\g x^o_{-}(\g, \eta)}{A}\Big]\cdot\frac{\dg^2x^o_\varepsilon(\g-\e)}{|\e|^{1+s}}d\e\\
&=\frac{2\pi c}{L} \int_{\Rr}\Big[\frac{(\p_\g x^o_\eps-\p_\g x^o)_-(\g, \eta)}{B_\eps(\g)}+\p_\g x^o_{-}(\g, \eta)\big(\frac{1}{B_\eps(\g)}-\frac{1}{B}\big)\\
&\qquad+\p_\g x^o_\eps(\g-\eta)\big(\frac{1}{B_\eps(\g)}-\frac{1}{B_\eps(\g-\eta)}\big)\Big]\cdot\frac{\dg^2x^o_\varepsilon(\g-\e)}{|\e|^{1+s}}d\e:=G_{5, 1}+G_{5, 2}+G_{5,3},
\end{align*}
where the splitting is according to the terms in the square brackets. Arguing as in $G_6$ gives
\begin{align*}
&\| G_{5, 1}\|_{L^2}\le \frac{C}{LB}\| \p_\g x^o_\eps-\p_\g x^o\|_{C^\mez}\| \p_\g^2x^o_\eps\|_{L^2},\\
&\| G_{5, 2}\|_{L^2}\le \frac{C}{L}\|B_\eps^{-1}(\cdot)-B^{-1}\|_{L^\infty}\| \p_\g x^o\|_{C^\mez}\| \p_\g^2x^o_\eps\|_{L^2},
\end{align*}
where both right-hand sides tend to $0$. For $G_{5, 3}$ we note that 
\begin{align*}
&|B_\eps(\g)-B_\eps(\g-\eta)|\le C\| \p_\g x^o_\eps\|_{C^{\mez+s}}|\e|^{\mez+s},\\
&|B_\eps(\g)-B_\eps(\g-\eta)|=|B_\eps(\g)-B(\g)+B(\g-\eta)-B_\eps(\g-\eta)|\le 2\| B_\eps(\cdot)-B\|_{L^\infty},
\end{align*}
whence 
\[
|B_\eps(\g)-B_\eps(\g-\eta)|\le C|\e|^{\mez}\| \p_\g x^o_\eps\|_{C^{\mez+s}}^{\frac{1}{1+2s}}\| B_\eps(\cdot)-B\|_{L^\infty}^{\frac{2s}{1+2s}}.
\]
Then using the argument in $G_6$  yields
\[
\|G_{5, 3}\|_{L^2}\le \frac{C}{LB^2}\| \p_\g x^o_\eps\|_{C^{\mez+s}}^{\frac{1}{1+2s}}\| B_\eps(\cdot)-B\|_{L^\infty}^{\frac{2s}{1+2s}}\| \p_\g^2x^o_\eps\|_{L^2}
\]
which again tends to $0$. This concludes the proof of \eqref{phiproperties}.
	\appendix
	\section{Littlewood-Payley theory}
	Let $\chi:\Rr^d\to \Rr$ be $C^\infty$ such that $\chi(\xi)=1$ for $|\xi|\le \frac12$ and $\chi(\xi)=0$ for $|\xi|\ge 1$. Set 
	\[
	\varphi(\xi)=\chi(\frac{\xi}{2})-\chi(\xi).
	\]
	For $k\ge 0$ we define 
	\[
	\chi_k(\xi)=\chi(\frac{\xi}{2^k}),\quad \varphi_k(\xi)=\chi_k(\xi)-\chi_{k-1}(\xi)=\varphi(\frac{\xi}{2^k}),
	\]
	so that $\varphi_k$ is supported in the annulus $\{2^{k-1}<|\xi|<2^{k+1}\}$. Clearly
	\[
	\chi(\xi)+\sum_{k=0}^\infty \varphi_k(\xi)=1\quad\forall \xi\in \Rr^d.
	\]
	For $f:\T^d\to \Rr$ and $j\ge 0$ we define the Fourier multipliers 
	\begin{align*}
		&\varphi_{-1}(D)f=\chi(D)f,\quad \Delta_j f=\cF^{-1}(\varphi_j\cF(f))\quad\forall j\ge 0,\\
		&\quad S_jf=\sum_{-1\le k\le j-1}\Delta_k f=\chi(2^{-j+1}D)f\quad\forall j\ge 0.
	\end{align*}
	We have $\varphi_{-1}(D)f=\wh{f}(0)$ and (formally)
	\[
	f=\sum_{j\ge -1} \Delta_j f=\wh{f}(0)+\sum_{j\ge 0} \Delta_j f.
	\]
	For $p, r\in [1, \infty]$ and $s\in \Rr$ we define the inhomogeneous Besov norm by 
	\bq
	\| f\|_{B^s_{p, r}(\T^d)}=\| 2^{sj}\| \Delta_j f\|_{L^p(\T^d)}\|_{\ell^r(\{-1\}\cup \Nn)},\quad \Nn=\{0, 1, \dots\}.
	\eq
	It is well-known that the Besov space $B^s_{2, 2}(\T^d)$ coincides with the  Sobolev space  $H^s(\T^d)$ and 
	\[
	\| f\|_{B^s_{p, q}(\T^d)}\simeq \| f\|_{H^s(\T^d)}.
	\]
	The homogeneous Besov norm is defined by removing $\Delta_{-1}f=\wh{f}(0)$, i.e.
	\bq
	\| f\|_{\dot B^s_{p, r}(\T^d)}=\| 2^{sj}\| \Delta_j f\|_{L^p(\T^d)}\|_{\ell^r( \Nn)}.
	\eq
	% Sobolev and Zygmund norms respectively by
	%\bq
	%\| f\|_{H^s(\T^d)}=\Big(\sum_{j\ge -1} 2^{2sj}\| \Delta_j f\|_{L^2(\T)}^2\Big)^\mez,\quad \| f\|_{C_*^s(\T^d)}=\sup_{j\ge -1}2^{js}\| \Delta_j f\|_{L^\infty(\T^d)}.
	%\eq
	%The homogeneous versions are defined by removing $\Delta_{-1}f=\wh{f}(0)$, i.e.
	%\bq
	%\| f\|_{\dot H^s(\T^d)}=\Big(\sum_{j\ge 0} 2^{2sj}\| \Delta_j f\|_{L^2(\T)}^2\Big)^\mez,\quad \| f\|_{\dot C_*^s(\T^d)}=\sup_{j\ge 0}2^{js}\| \Delta_j f\|_{L^\infty(\T^d)}.
	%\eq
	\begin{prop}\label{equi:HZ}
		For $\alpha\in (0, 1)$, the $\alpha$-H\"older (semi)norm is defined by 
		\bq
		\| f\|_{\dot C^\alpha(\T)}=\sup_{x,~y\in \T^d,~x\ne y}\frac{|f(x)-f(y)|}{|x-y|^\alpha}.
		\eq
		The norms $\| \cdot \|_{\dot C^\alpha(\T^d)}$ and $\| \cdot \|_{\dot B^\alpha_{\infty, \infty}(\T^d)}$  are equivalent. Consequently, the norms $\| \cdot \|_{C^\alpha(\T^d)}$ and $\| \cdot \|_{\dot B^\alpha_{\infty, \infty}(\T^d)}$  are equivalent, where
		\bq
		\| \cdot \|_{C^\alpha(\T^d)}=\| \cdot \|_{\dot C^\alpha(\T^d)}+\| \cdot\|_{L^\infty(\T^d)}.
		\eq
	\end{prop}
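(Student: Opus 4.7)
The plan is to prove the two continuous inclusions separately, which is the standard Littlewood--Paley characterization of Hölder classes. The argument works essentially identically on $\mathbb{R}^d$ and $\mathbb{T}^d$ once one accounts for the discrete low-frequency part.

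\textbf{Step 1 (Hölder $\Rightarrow$ Besov).} Fix $\alpha\in(0,1)$ and $j\ge 0$. Write $\Delta_j f(x)=(\check\varphi_j*f)(x)$ where $\check\varphi_j(y)=2^{jd}\check\varphi(2^jy)$. Since $\mathrm{supp}\,\varphi_j\subset\{2^{j-1}\le |\xi|\le 2^{j+1}\}$ excludes the origin, $\int\check\varphi_j(y)\,dy=\varphi_j(0)=0$ for $j\ge 0$, so we may write
$$\Delta_j f(x)=\int \check\varphi_j(y)\bigl[f(x-y)-f(x)\bigr]dy.$$
Taking absolute values,
$$|\Delta_j f(x)|\le \|f\|_{\dot C^\alpha}\int |\check\varphi_j(y)||y|^\alpha\,dy=\|f\|_{\dot C^\alpha}\,2^{-j\alpha}\int|\check\varphi(z)||z|^\alpha\,dz,$$
and the last integral is finite by the Schwartz decay of $\check\varphi$. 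Taking the supremum over $x$ and then the supremum over $j\ge 0$ of $2^{j\alpha}\|\Delta_j f\|_{L^\infty}$ yields $\|f\|_{\dot B^\alpha_{\infty,\infty}}\lesssim \|f\|_{\dot C^\alpha}$.

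\textbf{Step 2 (Besov $\Rightarrow$ Hölder).} For $x\ne y$ in $\mathbb{T}^d$, pick $N\in\mathbb{N}$ with $2^{-N}\le |x-y|<2^{-N+1}$ (the case $|x-y|$ comparable to $1$ is handled by the $L^\infty$ bound). Decompose
$$f(x)-f(y)=\sum_{j<N}\bigl(\Delta_j f(x)-\Delta_j f(y)\bigr)+\sum_{j\ge N}\bigl(\Delta_j f(x)-\Delta_j f(y)\bigr).$$
For the low frequencies, the mean value theorem combined with Bernstein's inequality $\|\nabla \Delta_j f\|_{L^\infty}\lesssim 2^j\|\Delta_j f\|_{L^\infty}$ gives
$$\sum_{j<N}|\Delta_j f(x)-\Delta_j f(y)|\lesssim |x-y|\sum_{j<N}2^j\cdot 2^{-j\alpha}\|f\|_{\dot B^\alpha_{\infty,\infty}}\lesssim |x-y|\,2^{N(1-\alpha)}\|f\|_{\dot B^\alpha_{\infty,\infty}}\lesssim |x-y|^\alpha\|f\|_{\dot B^\alpha_{\infty,\infty}},$$
using $1-\alpha>0$ so the geometric series is dominated by its largest term. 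For the high frequencies, the triangle inequality and a geometric series give
$$\sum_{j\ge N}|\Delta_j f(x)-\Delta_j f(y)|\le 2\sum_{j\ge N}\|\Delta_j f\|_{L^\infty}\lesssim 2^{-N\alpha}\|f\|_{\dot B^\alpha_{\infty,\infty}}\lesssim |x-y|^\alpha\|f\|_{\dot B^\alpha_{\infty,\infty}}.$$
Dividing by $|x-y|^\alpha$ and taking a supremum gives $\|f\|_{\dot C^\alpha}\lesssim\|f\|_{\dot B^\alpha_{\infty,\infty}}$.

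\textbf{Step 3 (Inhomogeneous statement).} The second equivalence follows from the first once we observe that on $\mathbb{T}^d$, any function can be normalized by subtracting its mean $\hat f(0)=\Delta_{-1}f$ (a constant does not affect the Hölder seminorm) and by noting that $\|f\|_{L^\infty}$ is controlled by $\|f\|_{\dot C^\alpha}+|\hat f(0)|$ modulo an absorption of the constant mode into one of the low-frequency Littlewood--Paley blocks; conversely $\|f\|_{\dot B^\alpha_{\infty,\infty}}\le\|f\|_{\dot C^\alpha}\le \|f\|_{C^\alpha}$ from Step 1. The only delicate point is the use of Bernstein's inequality on $\mathbb{T}^d$, which holds in the same form as on $\mathbb{R}^d$ because the Littlewood--Paley multipliers $\varphi_j(D)$ act as convolution against a Schwartz kernel whose derivatives gain a factor of $2^j$. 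This is the step where one must be a bit careful in the torus setting, but it is standard.
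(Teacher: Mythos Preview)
Your proof is correct and follows essentially the same approach as the paper: split frequencies at the scale $2^{-N}\simeq|x-y|$ and use Bernstein's inequality on low frequencies and the triangle inequality on high frequencies for one direction, and use the vanishing mean of $\check\varphi_j$ for the other. The only notable difference is that the paper makes the torus adaptation explicit via Poisson summation in your Step~1 (writing the periodic convolution kernel as $\sum_{k\in\Zz^d}\cF_{\Rr^d}^{-1}(\varphi_j)(\cdot+2\pi k)$ to reduce to the $\Rr^d$ integral), whereas you assert this passage is standard; your Step~3 is also somewhat more diffuse than the paper's two-line observation that $\|\Delta_{-1}f\|_{L^\infty}\lesssim\|f\|_{L^\infty}$ and $\|f\|_{L^\infty}\le\sum_{j\ge -1}\|\Delta_j f\|_{L^\infty}\lesssim\|f\|_{B^\alpha_{\infty,\infty}}$.
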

	\begin{proof}
		Let $x\ne y\in \T^d$ and choose $j_0\ge -1$ such that $|x-y|\simeq 2^{-j_0}$. Clearly $\Delta_{-1}f(x)-\Delta_{-1}f(y)=0$. For $0\le j\le j_0$ we use Bernstein's inequality to obtain
		\[
		|\Delta_j f(x)-\Delta_jf(y)|\le |x-y|\| \na \Delta_j f\|_{L^\infty}\les  2^j|\| \Delta_j f\|_{L^\infty}|x-y|\les  2^{j(1-\alpha)}\| f\|_{\dot B^\alpha_{\infty, \infty}(\T^d)}|x-y|.
		\]
		On the other hand, when $j\ge j_0$ we simply estimate 
		\[
		|\Delta_j f(x)-\Delta_jf(y)|\le 2\|\Delta_j f\|_{L^\infty}\le 22^{-j\alpha}\| f\|_{\dot B^\alpha_{\infty, \infty}(\T^d)}.
		\]
		Summing over $j\ge -1$ gives
		\begin{align*}
			|f(x)-f(y)|&\les \sum_{j=0}^{j_0}2^{j(1-\alpha)}\| f\|_{\dot B^\alpha_{\infty, \infty}(\T^d)}|x-y|+\sum_{j=j_0+1}^\infty 2^{-j\alpha}\| f\|_{\dot B^\alpha_{\infty, \infty}(\T^d)}\\
			&\les 2^{j_0(1-\alpha)}\| f\|_{\dot B^\alpha_{\infty, \infty}(\T^d)}|x-y|+2^{-j_0\alpha}\| f\|_{\dot B^\alpha_{\infty, \infty}(\T^d)}\\
			&\les |x-y|^{-1+\alpha}\| f\|_{\dot B^\alpha_{\infty, \infty}(\T^d)}|x-y|+|x-y|^{\alpha}\| f\|_{\dot B^\alpha_{\infty, \infty}(\T^d)}\\
			&\les |x-y|^{\alpha}\| f\|_{\dot B^\alpha_{\infty, \infty}(\T^d)}.
		\end{align*}
		Thus $\| f\|_{\dot C^\alpha}\les \| f\|_{\dot B^\alpha_{\infty, \infty}(\T^d)}$. 
		
		Conversely, for $j\ge 0$ we have $\int_{\T^d} \cF^{-1}\varphi_j(x)dx=0=\varphi_j(0)=0$ and thus
		\begin{align*}
			|\Delta_j f(x)|&=\Big|\int_{\T^d}\cF^{-1}(\varphi_j)(x-y)(f(y)-f(x)) dy\Big|\\
			&=\Big|\int_{\T^d}\sum_{k\in \Zz^d}\cF_{\Rr^d}^{-1}(\varphi_j)(x-y+2\pi k)(f(y)-f(x))dy \Big|\\
			&=\Big|\int_{\Rr^d}\cF_{\Rr^d}^{-1}(\varphi_j)(x-y)(f(y)-f(x))dy \Big|,
		\end{align*} 
		where we have used the Poisson summation 
		\[
		\cF^{-1}(\tt)(x)=\sum_{k\in \Zz^d}\cF_{\Rr^d}^{-1}(\tt)(x+2\pi k)
		\]
		for all Schwartz function $\tt:\Rr^d\to \Cc$. Here $\cF^{-1}_{\Rr^d}\tt(x)=(2\pi)^{-d}\int_{\Rr^d}e^{ix\xi}\tt(\xi)d\xi$.  It follows that 
		\[
		|\Delta_j f(x)|\le \| f\|_{\dot C^\alpha}\Big|\int_{\Rr^d}|x-y|^\alpha\cF_{\Rr^d}^{-1}(\varphi_j)(x-y)dy \Big|\les  2^{-j\alpha}\| f\|_{\dot C^\alpha}
		\]
		and thus $ \| f\|_{\dot B^\alpha_{\infty, \infty}(\T^d)}\les\| f\|_{\dot C^\alpha}$. We have proved the equivalence between $\| \cdot \|_{\dot C^\alpha(\T^d)}$ and $\| \cdot \|_{\dot B^\alpha_{\infty, \infty}(\T^d)}$. Regarding the equivalence between the inhomogeneous norms, it suffices to note that 
		\[
		\|\Delta_{-1}f\|_{L^\infty}\les \| f\|_{L^\infty},\quad \| f\|_{L^\infty}\le \sum_{j\ge -1}\| \Delta_j f\|_{L^\infty}\les \| f\|_{B^\alpha_{\infty, \infty}}.
		\]
	\end{proof}
	\begin{defi}
		For $a, u:\T^d\to \Cc$ the paraproduct $T_au$ is defined by 
		\[
		T_au=\sum_{j\ge 1}S_{j-1}a\Delta_ju.
		\]
		The (formal) Bony decomposition is given by
		\[
		au=T_au+T_ua+R(a, u),
		\]
		where 
		\[
		R(a, u)=\sum_{j, k\ge -1,~ |j-k|\le 1}\Delta_j a\Delta_k u.
		\]
	\end{defi}
	Note that $\supp{\cF(S_{j-1}a\Delta_j u)}\subset \{2^{j-1}\le |\xi|\le 2^{j+2}\}$ for all $j\ge 1$.
	\begin{prop}
		(i) If $s_1, s_2\in \Rr$ such that $s_1+s_2>0$ then 
		\bq\label{BonyR}
		\| R(a, u)\|_{H^{s_1+s_2}}\les \| a\|_{B^{s_1}_{\infty, \infty}}\| u\|_{H^{s_2}}.
		\eq
		(ii) For all $s\in \Rr$ and $m>0$
		\begin{align}\label{pp:1}
			\| T_a u\|_{H^s}&\les \| a\|_{L^\infty}\| u\|_{H^s},\\ \label{pp:2}
			\| T_a u\|_{H^s}&\les \| a\|_{B^{-m}_{\infty, \infty}}\| u\|_{H^{s+m}}.
		\end{align}
		(ii) For all $s\in \Rr$ and $r>0$, 
		\bq\label{pp:new1}
		\| T_au\|_{H^s}\les \|a\|_{H^{-r}}\| u\|_{B^{s+r}_{\infty, 2}}.
		\eq
	\end{prop}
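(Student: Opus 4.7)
The plan is to reduce everything to the Littlewood--Paley characterization $\|f\|_{H^s}^2\simeq \sum_{j\ge -1}2^{2js}\|\Delta_j f\|_{L^2}^2$, combined with the key \emph{almost orthogonality} observation already noted in the excerpt: for each $j\ge 1$, the block $S_{j-1}a\,\Delta_j u$ has Fourier support in the annulus $\{2^{j-1}\le|\xi|\le 2^{j+2}\}$. Consequently, different $j$'s are spectrally separated up to a fixed overlap, so
\[
\|T_au\|_{H^s}^2\ \simeq\ \sum_{j\ge 1}2^{2js}\bigl\|S_{j-1}a\,\Delta_j u\bigr\|_{L^2}^2.
\]
Given this reduction, each paraproduct bound becomes a matter of choosing the right H\"older pairing inside the $L^2$ norm.

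For \eqref{pp:1} I would simply use $\|S_{j-1}a\|_{L^\infty}\le \|a\|_{L^\infty}$ and the $L^\infty\cdot L^2$ H\"older inequality, which gives $\|T_au\|_{H^s}^2\lesssim \|a\|_{L^\infty}^2 \sum_j 2^{2js}\|\Delta_j u\|_{L^2}^2 \simeq \|a\|_{L^\infty}^2\|u\|_{H^s}^2$. For \eqref{pp:2}, observe that $\|\Delta_k a\|_{L^\infty}\le 2^{km}\|a\|_{B^{-m}_{\infty,\infty}}$, and since $m>0$ the geometric series sums to
\[
\|S_{j-1}a\|_{L^\infty}\ \le\ \sum_{k=-1}^{j-2}\|\Delta_k a\|_{L^\infty}\ \lesssim\ 2^{jm}\|a\|_{B^{-m}_{\infty,\infty}}.
\]
Plugging in yields $\|T_au\|_{H^s}^2\lesssim \|a\|_{B^{-m}_{\infty,\infty}}^2\sum_j 2^{2j(s+m)}\|\Delta_j u\|_{L^2}^2$, which is exactly $\|a\|_{B^{-m}_{\infty,\infty}}^2\|u\|_{H^{s+m}}^2$. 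For \eqref{pp:new1} I would instead put $a$ in $L^2$ and $u$ in $L^\infty$: by $L^2$-orthogonality,
\[
\|S_{j-1}a\|_{L^2}^2\ =\ \sum_{k=-1}^{j-2}\|\Delta_k a\|_{L^2}^2\ \lesssim\ \|a\|_{H^{-r}}^2\sum_{k\le j-2}2^{2kr}\ \lesssim\ 2^{2jr}\|a\|_{H^{-r}}^2,
\]
using $r>0$. Then $\|T_au\|_{H^s}^2\lesssim \|a\|_{H^{-r}}^2\sum_j 2^{2j(s+r)}\|\Delta_j u\|_{L^\infty}^2 = \|a\|_{H^{-r}}^2\|u\|_{B^{s+r}_{\infty,2}}^2$.

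The main obstacle is \eqref{BonyR}, since the summands $\Delta_j a\,\Delta_k u$ with $|j-k|\le 1$ only have Fourier support in a \emph{ball} of radius $\lesssim 2^j$, not an annulus, so the orthogonality argument loses one direction. The standard fix is to estimate
\[
\|\Delta_l R(a,u)\|_{L^2}\ \lesssim\ \sum_{\substack{j\ge l-N\\ |k-j|\le 1}}\|\Delta_j a\|_{L^\infty}\|\Delta_k u\|_{L^2}
\]
for some absolute $N$, then insert $\|\Delta_j a\|_{L^\infty}\le 2^{-js_1}\|a\|_{B^{s_1}_{\infty,\infty}}$ and $\|\Delta_k u\|_{L^2}\le c_k 2^{-ks_2}\|u\|_{H^{s_2}}$ with $\{c_k\}\in\ell^2$ of norm $\|u\|_{H^{s_2}}$. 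Multiplying by $2^{l(s_1+s_2)}$ gives
\[
2^{l(s_1+s_2)}\|\Delta_l R(a,u)\|_{L^2}\ \lesssim\ \|a\|_{B^{s_1}_{\infty,\infty}}\sum_{j\ge l-N}2^{-(j-l)(s_1+s_2)}c_j\|u\|_{H^{s_2}},
\]
and since $s_1+s_2>0$ the right-hand side is a convolution of $\{c_j\}\in\ell^2$ with a summable geometric sequence, so discrete Young's inequality gives the $\ell^2_l$ bound $\|a\|_{B^{s_1}_{\infty,\infty}}\|u\|_{H^{s_2}}$, proving \eqref{BonyR}. The condition $s_1+s_2>0$ enters precisely to make this geometric series convergent; without it the low-frequency ends of $R(a,u)$ fail to concentrate, which is the well-known obstruction.
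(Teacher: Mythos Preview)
Your proposal is correct and, for \eqref{pp:new1}, matches the paper's argument (H\"older $L^2\cdot L^\infty$ on each paraproduct block, then geometric summation in $k$ using $r>0$); the paper simply cites \cite{BCD} for \eqref{BonyR}, \eqref{pp:1}, \eqref{pp:2}, so your supplying those standard arguments goes beyond what the paper actually writes. One minor slip: the $\Delta_k$ have overlapping Fourier supports, so the claimed identity $\|S_{j-1}a\|_{L^2}^2=\sum_{k\le j-2}\|\Delta_k a\|_{L^2}^2$ is only an equivalence $\simeq$, not an equality, but this does not affect the bound.
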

	\begin{proof}
		The inequalities \eqref{BonyR}, \eqref{pp:1} and \eqref{pp:2} can be found in Section 2.8, \cite{BCD}. Let us prove \eqref{pp:new1}. For all $j\ge 1$, 
		\begin{align*}
			\| S_{j-1}a \Delta_j u\|_{L^2}&\le \sum_{k=-1}^{j-2}\| \Delta_k a\|_{L^2}\| \Delta_j u\|_{L^\infty}\\
			&\le  \Big(\sum_{k=-1}^{j-2}2^{-2rk}\| \Delta_k a\|^2_{L^2}\Big)^\mez \Big(\sum_{k=-1}^{j-2}2^{2rk}\Big)^\mez\| \Delta_j u\|_{L^\infty}\\
			&\les \| a\|_{H^{-r}}2^{rj}\| \Delta_j u\|_{L^\infty}.
		\end{align*}
		Consequently, 
		\[
		\| T_au\|_{H^s}^2\les\| a\|_{H^{-r}}^2\sum_{j\ge 1}2^{2(s+r)j}\| \Delta_j u\|^2_{L^\infty}\les \| a\|_{H^{-r}}^2\| u\|_{B^{s+r}_{\infty, 2}}^2.
		\]
	\end{proof}
	\begin{lemm}[\protect{\cite[Lemma~2.99]{BCD}}]\label{comt:LdT}
		For $s, m\in \Rr$ and $p, r\in [1, \infty]$, there exists $C>0$ such that
		\bq\label{cmt:pp}
		\| [\Ld^m, T_a] u\|_{B^s_{p, r}(\T^d)}\le C\| \na a\|_{L^\infty(\T^d)}\| u\|_{B^{s+m-1}_{p, r}(\T^d)}.
		\eq
	\end{lemm}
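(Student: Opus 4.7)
The plan is to exploit Littlewood-Paley localization: since $T_au=\sum_{j\ge 1} S_{j-1}a\,\Delta_j u$ and the $j$-th summand has Fourier spectrum in the annulus $\{2^{j-1}\le|\xi|\le 2^{j+2}\}$, the operator $\Delta_k$ picks out only $O(1)$ values of $j$ around $j=k$. Accordingly, I would reduce everything to the single-block estimate
\[
\|[\Ld^m, S_{j-1}a]\Delta_j u\|_{L^p(\T^d)}\les 2^{(m-1)j}\|\na a\|_{L^\infty}\|\Delta_j u\|_{L^p(\T^d)},\qquad j\ge 1,
\]
and then multiply by $2^{sj}$ and take the $\ell^r$ norm of the result.

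For the block estimate, the key is to realize $\Ld^m$ restricted to scale $2^j$ as a convolution with a Schwartz-type kernel. Fix a bump $\wt\varphi\in C^\infty_c(\Rr^d\setminus\{0\})$ equal to $1$ on $\supp\varphi$, and set $h(x)=\cF_{\Rr^d}^{-1}(|\xi|^m\wt\varphi(\xi))\in\mathcal{S}(\Rr^d)$ and $h_j(x)=2^{jd}h(2^jx)$. Periodizing, $H_j(x)=\sum_{k\in\Zz^d}h_j(x+2\pi k)$ satisfies $\Ld^m\Delta_j u=H_j*\Delta_j u$ on $\T^d$, together with the scaling-consistent bounds $\|H_j\|_{L^1(\T^d)}\les 2^{jm}$ and crucially $\||\cdot|H_j\|_{L^1(\T^d)}\les 2^{j(m-1)}$. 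Writing
\[
[\Ld^m, S_{j-1}a]\Delta_j u(x)=\int H_j(x-y)\bigl(S_{j-1}a(x)-S_{j-1}a(y)\bigr)\Delta_j u(y)\,dy,
\]
the mean-value form $S_{j-1}a(x)-S_{j-1}a(y)=(x-y)\cdot\int_0^1 \na S_{j-1}a(y+t(x-y))\,dt$ combined with $\|\na S_{j-1}a\|_{L^\infty}\le C\|\na a\|_{L^\infty}$ and Young's convolution inequality delivers the block bound.

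To assemble the Besov estimate, I would use near-orthogonality: only frequencies with $|k-j|\le 2$ contribute to the spectrum of $S_{j-1}a\,\Delta_j u$, so
\[
\|\Delta_k [\Ld^m, T_a]u\|_{L^p}\les 2^{(m-1)k}\|\na a\|_{L^\infty}\sum_{|k-j|\le 2}\|\Delta_j u\|_{L^p}.
\]
Multiplying by $2^{sk}$ and taking $\ell^r$ (using Young's inequality on sequences for the finite sum in $j$) yields $\|[\Ld^m,T_a]u\|_{B^s_{p,r}}\les\|\na a\|_{L^\infty}\|u\|_{B^{s+m-1}_{p,r}}$. The main obstacle, as is typical for paraproduct commutators, is extracting a gain of exactly one derivative off $a$ rather than $m$; this is precisely what the mean-value trick combined with the first-moment estimate $\||\cdot|H_j\|_{L^1}\les 2^{j(m-1)}$ provides. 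The transfer from $\Rr^d$ kernel bounds to the torus is mild because $h$ is Schwartz and $2^{-j}\le 1$ for $j\ge 1$, so periodization does not disturb the scaling; this is essentially the only bookkeeping step beyond the calculation above.
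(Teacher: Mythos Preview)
The paper does not prove this lemma itself but simply cites \cite[Lemma~2.99]{BCD}, and your argument is precisely the standard kernel-plus-mean-value proof given there. Two cosmetic corrections: you need $\wt\varphi\equiv1$ on the slightly larger annulus $\{\tfrac12\le|\xi|\le4\}$ (since the spectrum of $S_{j-1}a\,\Delta_ju$ extends to $|\xi|\le 2^{j+2}$, not just $2^{j+1}$) so that $H_j*(S_{j-1}a\,\Delta_ju)=\Ld^m(S_{j-1}a\,\Delta_ju)$ actually holds; and $h_j$ should carry the factor $2^{jm}$, i.e.\ $h_j(x)=2^{jm}\,2^{jd}h(2^jx)$, so that $\hat h_j(\xi)=|\xi|^m\wt\varphi(2^{-j}\xi)$ and the moment bounds $\|H_j\|_{L^1}\les 2^{jm}$, $\||\cdot|H_j\|_{L^1}\les 2^{j(m-1)}$ you state are indeed correct.
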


\vspace{.1in}	
\noindent{\bf{Acknowledgment.}} 
F. Gancedo was partially supported by the ERC through the Starting Grant project H2020-EU.1.1.-639227 and by the grant EUR2020-112271 (Spain).  H.Q. Nguyen was partially supported by NSF grant DMS-190777. N. Patel was partially supported by the AMS-Simons Travel Grants, which are administered by the American Mathematical Society with support from the Simons Foundation.

\end{document}